\documentclass[11pt]{article}
\usepackage{amsmath, latexsym, amsfonts, amssymb, amsthm, amscd}
\usepackage{amsmath}
\usepackage{mathtools} 
\usepackage[left=2.6cm, right=2.6cm, top=3cm, bottom=3cm]{geometry}
\usepackage{upquote} 
\usepackage{algorithmicx}
\usepackage{color}
\usepackage{setspace} 
\usepackage[labelfont=bf]{caption} 
\usepackage{stmaryrd} 
\usepackage{multicol} 
\usepackage[dvipsnames]{xcolor}
\usepackage{lipsum}
\usepackage{kpfonts} 
\usepackage{dsfont} 
\usepackage{upgreek}
\usepackage{url}
\usepackage{float}

\usepackage[english]{babel} 
\usepackage[utf8]{inputenc}
\usepackage[T1]{fontenc}
\usepackage{enumerate}
\usepackage{graphicx}
\usepackage{color,soul}
\usepackage{comment}
\usepackage{caption}

\usepackage[toc,page]{appendix}
\usepackage[mathlines]{lineno}
\usepackage{etoolbox} 
\newcommand*\linenomathpatch[1]{%
	\cspreto{#1}{\linenomath}%
	\cspreto{#1*}{\linenomath}%
	\csappto{end#1}{\endlinenomath}%
	\csappto{end#1*}{\endlinenomath}%
}
\newcommand*\linenomathpatchAMS[1]{%
	\cspreto{#1}{\linenomathAMS}%
	\cspreto{#1*}{\linenomathAMS}%
	\csappto{end#1}{\endlinenomath}%
	\csappto{end#1*}{\endlinenomath}%
}
\expandafter\ifx\linenomath\linenomathWithnumbers
\let\linenomathAMS\linenomathWithnumbers
\patchcmd\linenomathAMS{\advance\postdisplaypenalty\linenopenalty}{}{}{}
\else
\let\linenomathAMS\linenomathNonumbers
\fi

\linenomathpatch{equation}
\linenomathpatchAMS{gather}
\linenomathpatchAMS{multline}
\linenomathpatchAMS{align}
\linenomathpatchAMS{alignat}
\linenomathpatchAMS{flalign}

\usepackage{bm} 

\usepackage{enumitem} 

\usepackage{xcolor}
\definecolor{Maroon}{HTML}{ad2231}
\definecolor{webgreen}{HTML}{008000}
\usepackage{dsfont}
\usepackage{hyperref}
\usepackage{lipsum}
\hypersetup{colorlinks, breaklinks, urlcolor=black, linkcolor=black, citecolor=webgreen} 
\allowdisplaybreaks
\usepackage{marginnote}

\usepackage{amsthm}
\newtheorem{theorem}{Theorem}

\newtheorem{proposition}[theorem]{Proposition}
\newtheorem{lemma}[theorem]{Lemma}
\newtheorem{remark}[theorem]{Remark}
\newtheorem{definition}[theorem]{Definition}

\theoremstyle{definition}

\usepackage{todonotes}
\newcommand{\Apostolos}[1]{\todo[color=cyan!20,size=\tiny]{Apostolos: #1}}

\usepackage{chngcntr}
\usepackage{apptools}
\AtAppendix{\counterwithin{lemma}{section}}
\newcommand{\email}[1]{\gdef\@email{\url{#1}}}

\newcommand{\R}{\mathbb{R}}

\newcommand{\E}{\mathbb{E}}

\DeclareMathAlphabet{\mathpzc}{OT1}{pzc}{m}{it}
\newcommand{\ee}{\textnormal{e}}
\newcommand{\dd}{\mathrm{d}}

\newcommand{\1}{\mathbf{1}}
\newcommand{\V}{\mathrm{V}}
\newcommand{\B}{\mathrm{B}} 
\newcommand{\p}{{p}} 
\newcommand{\f}{\mathrm{r}}  

\makeatletter
\def\widebreve{\mathpalette\wide@breve}
\def\wide@breve#1#2{\sbox\z@{$#1#2$}%
	\mathop{\vbox{\m@th\ialign{##\crcr
				\kern0.08em\brevefill#1{0.8\wd\z@}\crcr\noalign{\nointerlineskip}%
				$\hss#1#2\hss$\crcr}}}\limits}
\def\brevefill#1#2{$\m@th\sbox\tw@{$#1($}%
	\hss\resizebox{#2}{\wd\tw@}{\rotatebox[origin=c]{90}{\upshape(}}\hss$}
\makeatletter
\allowdisplaybreaks

\begin{document}
	\title{L\'evy processes with  partially stochastic resetting }
	\author{
 Zbigniew Palmowski\footnote{Department of Applied Mathematics, Wrocław Un. of Science and Technology, \texttt{zbigniew.palmowski@gmail.com}},	\, 	Noah Beelders\footnote{Department of Mathematical Sciences,
		University of Liverpool, \texttt{arkbeelder@gmail.com}}, \,  Lewis Ramsden\footnote{School for Businesses and Society, Univeristy of York, \texttt{lewis.ramsden@york.ac.uk}} \, \&  Apostolos D. Papaioannou\footnote{Department of Mathematical Sciences,
		University of Liverpool, \texttt{papaion@icloud.com}} } 
	
	\maketitle
	
	\tableofcontents
	
	\vspace{0.1in}
	
	\begin{abstract} 
	In this paper, we solve exit problems for a  L\'evy process that resets proportionally to its current position   at  independent  Poisson epochs times. This resetting causes an additional, proportional to its current  level,  downward (upward) jump when   the current position of the process is on the positive (negative)  domain.    All identities are given in terms of new family  of 
	scale function operators.  To obtain the new  scale function operators, we reduce the problem of the LT of the exit times into integral equations that are solve in terms of resolvent series. 
	\end{abstract}
	
	\noindent  {\sc Keywords}:  Partial stochastic resetting L\'evy process, fluctuation theory, scale function operators, partial resetting integral equation.  
	
	
	\section{Introduction}
In this paper, we study a spectrally negative L\'evy  process (SNLP) which experiences, at some (independent) Poisson epochs, a random jump whose size  is a random proportion of the state just before the jump.  Stochastic processes exhibiting this type of mechanism are referred to as \emph{processes with partial stochastic resetting} or  processes with \emph{additive-increase and multiplicative-decrease} (AIMD). In the special case where the process  resets to the origin (i.e.,~when the proportional component is absent), one obtains a process with \emph{total stochastic resetting}, a class of models originating in  \cite{EvS2011}. Due to their theoretical and practical significance,  such processes have  attracted a lot of attention and various results in relation to total resetting can be found in \cite{APZ2013, BFHMS2024, BKP2011, BLMP2021, BCGPST2025, DGR2022, GPST2026,  GPZ2004, AAM2016, HKPS2023, LLO2009}  as well as in the references of these papers, whilst the results for processes with partial resetting are limited, see for instance \cite{BCGPST2025,  HKPS2023}.   There is no doubt the aforementioned  processes have been motivated  from different fields.  Following the thorough and extensive discussion  in \cite{BKP2026, BCGPST2025}, the  total or partial   resetting process can be found in population genetics (see  for e.g. in   \cite{AEK2009}), in physics (see for e.g.  \cite{DCHPM2023}), in the transmission control protocol (TCP) networks (see  in \cite{DGP2002, GPZ2004, LL2008}), in cash management (see  \cite{BKP2026}), in stochastic thermodynamics (see   \cite{BCGPST2025}), in queuing (see \cite{AEL2007}), and in finance (see  \cite{YYL2005}).  

\vspace{1ex}

\noindent \textbf{Formulation of the problem}. To formulate our problem mathematically, we let  $X \equiv\left\{X_t\right\}_{t \geq 0}$ be a SNLP and   $N \equiv \{N_t\}_{t\geq 0}$  be  a Poisson process with   epochs $\left\{T_i\right\}_{i \in \mathbb{N}}$ of intensity $\lambda$.
The collapses are modelled by multiplying the present process position by a fixed proportion $p \in(0,1)$, i.e. $-\Delta X_{T_i}=(1-p) X_{T_i-}$ for $\Delta X_t=X_t-X_{t-}$. Therefore, we can define the \emph{partially stochastic resetting  L\'evy process} (PSR-LP), $U \equiv\left\{U_t\right\}_{t \geq 0}$,  starting at $U_0 = x_0$ with a resetting coefficient $p \in (0,1)$ as the solution to the stochastic differential equation (SDE)
\begin{equation}
	\dd U_t = \dd X_t -(1- p)U_{t^-} \dd N_t.  \label{Eq:SDEofPSR-LP}
\end{equation}
 It should now be apparent that  in addition to the negative jumps of the underlying process $X$, the process $U$ exhibits downward jumps whenever $U_{T_i^-}>0$, due to resetting, and upward jumps whenever $U_{T_i^-}<0$ (for the same reason).
It is also clear that the Brownian motion  component and the L\'evy jump components now provide mechanisms for $U$ to move above and below $0$, however, it is still not possible to move (spatially) between $\R_+$ to $\R_-$ (or vice-versa) by partial stochastic resetting. 
Clearly, when $p=1$, the process reduces to the dynamics of the underlying process $X$ and $p=0$ yields the total resetting SNLP. 
We note  that in the presence of partial stochastic resetting, the resulting trajectories tend to be closer to the origin, while they experience long excursions in the unperturbed case. This hints at the existence of a stationary state which was further investigated in  \cite{DCHPM2023}. A typical  sample path of the process $U$ is illustrated  in Figure \ref{figap1}.  
\vspace{-4ex}
\begin{figure}[H]
	\centering
	\includegraphics[width=0.6\textwidth,height=7cm]{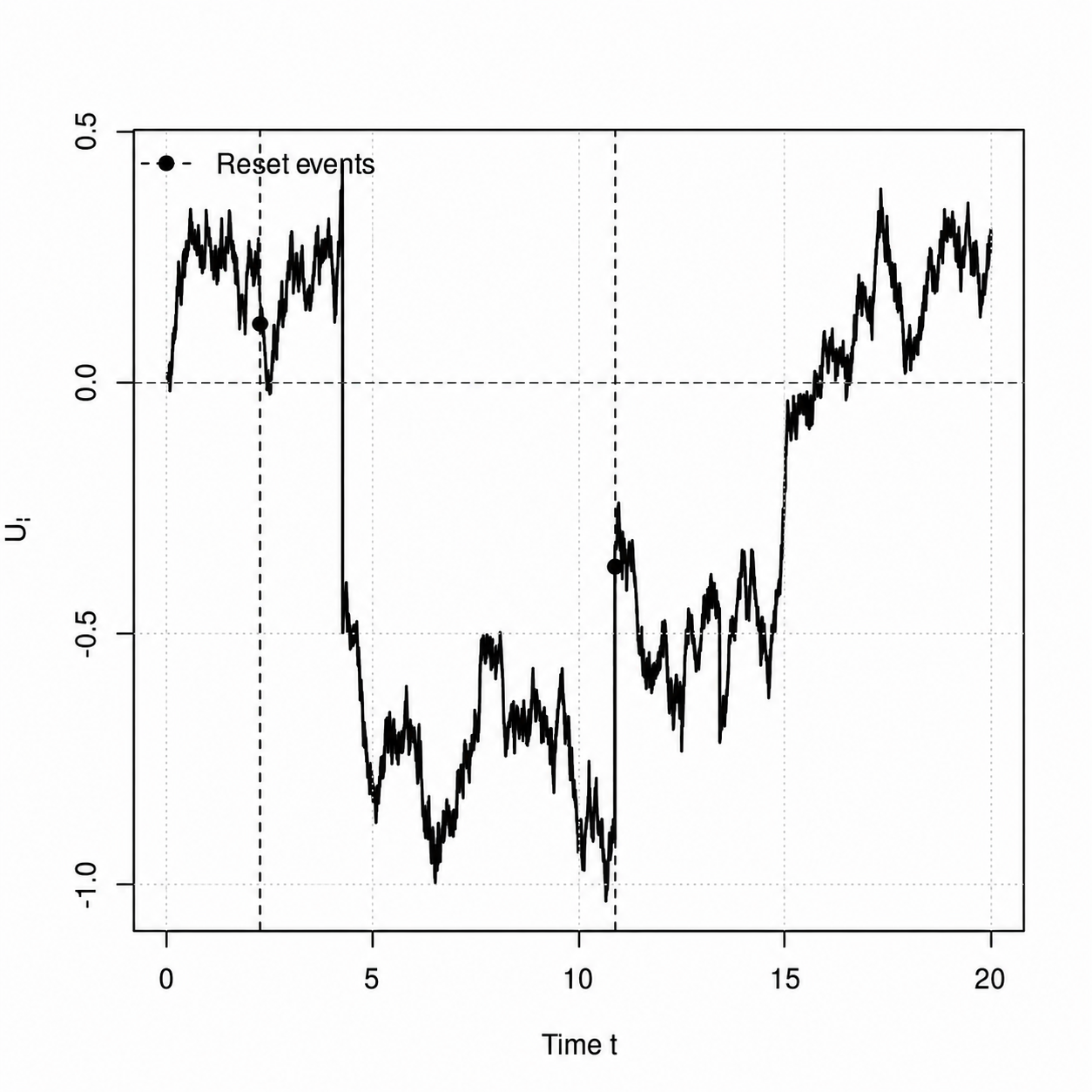}
	\caption{Spectrally negative L\'evy process with resetting with p=0.6}
	\label{figap1}
\end{figure}

\noindent \textbf{Contribution of the paper}. The main aim of this paper is to develop fluctuation theory for the PSR-LP, which is achieved by constructing a new class of scale  functions (based on potential measures rather than the classical scale functions) that fits the  PSR-LP set up.  Within this framework, the paper addresses two main difficulties arising for partial resetting process:  \textbf{(a)} Fluctuation theory of L\'evy processes typically relies on two key properties, namely  the Markov property and the skip-free property. However, as pointed out in \cite{HKPS2023} the key feature distinguishing AIMD processes from classical Lévy models is that the distribution of each jump depends on the position of $X$ prior to this jump. This dependence substantially complicates the study of associated exit identities and prevents the direct application of the standard machinery of Lévy fluctuation theory; for example, techniques based on Wiener–Hopf factorization  or Itô excursion theory.  To overcome these difficulties, we develop an alternative methodology based on potential densities  and integral equations. \textbf{(b)} As will be seen in the upcoming sections, PSR-LP exit identities are derived based on integral operators which evaluate functions at proportions (contractions) of their spatial points. This destroys the classical convolution or symmetric structure typically exploited in integral equations and complicates the direct application of standard spectral or Fourier techniques. To overcome this obstacle, we develop a solution based on the resolvent series. 

\noindent \textbf{Organization of the paper}. The paper is structured  as follows. In Section \ref{prelim},  we provide some basic theory of
scale functions and useful identities that will be used in the rest of the paper.  In Section \ref{Sec:PSR-LP Exit Identities},  the existence of the solution of the SDE in Eq.  \eqref{Eq:SDEofPSR-LP} is discussed and  we present our main
results (two and one-sided exit identities) with their proofs. Furthermore, we derive exit identities for the stochastic partial resetting process under a lower and also a upper reflection.  In the same section we show how these results can be reduced to the total stochastic resetting L\'evy process, by means of a limiting argument ($p\rightarrow 0$).


\color{black}	
	\section{Preliminaries}
	\label{prelim}
	Let $X \equiv \{X_{t}\}_{t \geq 0}$ be a SNLP defined on the filtered space $(\Omega, \mathcal{F}, \{\mathcal{F}_{t}\}_{t \geq 0}, \mathbb{P})$, where the filtration $\{\mathcal{F}_{t}\}_{t \geq 0}$ is assumed to satisfy the usual assumptions of right-continuity and completion. A L\'evy process with no positive jumps (the case of monotone paths is excluded) has a Laplace exponent $\psi(\vartheta):[0,\infty)\rightarrow \mathbb R$ such that
	\begin{equation*}
		\psi(\vartheta) := \log \E[e^{\vartheta X_1}], \;\;  \forall \vartheta\geqslant 0, 
	\end{equation*}
	for which the L\'evy-Khintchine formula shows that it has the form
	\[ \psi(\vartheta) = \mu\vartheta + \frac{\vartheta^2 \sigma^2}{2} + 
	\int_{(-\infty, 0)}^{} (e^{\vartheta x} -1- \vartheta x \mathbf{1}_{\{x >- 1\}}) \nu(\mathrm{d}x),\]
	where $\mu \in \R$, $\sigma \geq 0$ 
	and $\nu$  the L\'evy measure which is a $\sigma$-finite measure 
	concentrated on $(-\infty,0)$ satisfying $\int_{(-\infty,0)}(1 \wedge |x|^2) \nu(\mathrm{d}x)  <\infty$. 
	The above shows that $\psi$ is a continuous and strictly convex function, and that it tends to infinity as $\vartheta$ tends to infinity. Thus, for $q \geq 0$, one can define the right-inverse of the Laplace exponent $\Phi_q := \sup\{\vartheta: \psi(\vartheta) = q\},
	$ for which $\vartheta = 0$ is the unique solution to $\psi(\vartheta) = 0$ on $[0,\infty)$ if $\psi^{\prime}\left(0^{+}\right) \geq 0$ else there are two solutions. Further details about SNLPs can be found in the monographs of Bertoin \cite{B1996}, Kuznetsov et al.  \cite{KKR2012} and Kyprianou  \cite{K2014}.

	
	
	It is well-known that the fluctuation identities for $X$ relies heavily on the so-called $W^{(q)}$ and $Z^{(q)}$ scale functions (see Chapter 8 in Kyprianou \cite{K2014}). Hence, 
	for any $q\geq 0$, define $W^{(q)}: \R \to [0, \infty)$ and  $Z^{(q)}: \mathbb{R} \rightarrow[1, \infty)$  having the forms 
	\begin{equation}
		\int_{0}^{\infty} e^{-\vartheta x}W^{(q)}(x)\mathrm{d}x = \frac{1}{\psi_q(\vartheta)},\;  \vartheta > \Phi_q,  \quad \text{and}\quad  	Z^{(q)}(x)=1+q \int_0^x W^{(q)}(y) \mathrm{d} y,
		\label{eq:LTofscaleW}
	\end{equation}
	where $\psi_q(\vartheta) := \psi(\vartheta) -q$,  $W^{(q)}(x)=0$ for $x<0$. We note that  $W^{(q)}(x)$ is right  continuous increasing function and that $Z^{(q)}$ inherits the properties of $W^{(q)}$. In the
	rest of the paper, when $q= 0$, we shall drop the subscript.
	
	By considering the Laplace transform of $W^{(q)}$, one can deduce that $W^{(q)}(0+)=1 / c$ when $X$ is of bounded variation since it necessarily has the form $c t-S_t$, where $S=\left\{S_t: t \geq 0\right\}$ is a driftless subordinator and $c>0$. In contrast, the case when $X$ is of unbounded variation yields $W^{(q)}(0+)=0$. For more information   texts such as Chan et al.~\cite{CK2011}, Kuznetsov et al.~\cite{KKR2012}, and Kyprianou et al. \cite{KRS2010}  can be consulted for further discussions on the intricacies of the smoothness of scale functions as well as other facts about them. 
	
	\noindent 	With regards to the limits of scale functions, it is well-known that 
		\begin{equation}
			\lim_{a \rightarrow \infty} \frac{W^{(q)}(a+x)}{W^{(q)}(a)} = e^{\Phi_q x}, \; \quad \lim_{a \rightarrow \infty} \frac{Z^{(q)}(a)}{W^{(q)}(a)} = \frac{q}{\Phi_q}. \label{eq:LimitofRatioofScaleFuncs}
		\end{equation}
Potential measures that are known to play a fundamental role in fluctuation theory of L\'evy processes and   	are strongly  connected with the scale functions (in the case of SNLPs),  will be (mainly) used throughout this paper. Therefore, for 
\[
\tau_{a,X}^{+(-)}:=\inf \left\{t>0: X_t>(<)\; a\right\},\]
where $X$ in their subscripts indicate the underlying process that is considered and for 
any $a>0$  , $x, y \in[0, a]$, $q \geq 0$ the q-potential measure of $X$ killed on exiting $[0,a]$, with density (with respect to Lebesgue measure) $	\mathrm R^{(q)}(x,\dd y)\equiv\f^{(q)}(x,y)\dd y$,   given by  
	\begin{equation}
	\int_0^{\infty} \mathrm{e}^{-q t} \mathbb{P}_x\left(X_t \in \mathrm{d} y, t<\tau_a^{+} \wedge \tau_0^{-}\right) \mathrm{d} t= \bigl[\frac{W^{(q)}(x) }{W^{(q)}(a)}W^{(q)}(a-y)-W^{(q)}(x-y)\bigr] \mathrm{d} y.   \label{eq:ClassicalKilledPotential}
	\end{equation}
	In fact,  we shall often make use of an important fact about absolutely continuous potential measures given in the following proposition. 
	\begin{proposition}[Proposition  10 in Bertoin  \cite{B1996}, pp.~25] \label{Prop:BertoinProp}
		The following assertions are equivalent:
		\begin{itemize}
			\item[\upshape {(i)}] For every $q > 0$ and every $x \in \R$, the potential measure $\mathrm R^{(q)}(x,\dd y)$ is absolutely continuous w.r.t.~the Lebesgue measure.
			
			\item[\upshape{(ii)}] The resolvent operator 
			\[
		\mathrm 	R^{(q)} f(x) := \int_{\R} f(y) \mathrm R^{(q)}(x, \dd y)
			\] 
			has the strong Feller property; that is, for every $q > 0$ and $f \in L^\infty(\R)$, the function $\mathrm R^{(q)} f$ is continuous, where $L^\infty(\R)$ denotes the space of essentially bounded measurable functions with domain $\R$ w.r.t.~the Lebesgue measure.
		\end{itemize}
	\end{proposition}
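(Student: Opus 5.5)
The plan is to prove the two implications separately, for a fixed $q>0$, working with the kernel $\mathrm R^{(q)}(x,\dd y)=\int_0^\infty e^{-qt}\mathbb P_x(X_t\in\dd y)\,\dd t$. Since $X$ is a L\'evy process, it is spatially homogeneous: $\mathrm R^{(q)}(x,A)=\mathrm R^{(q)}(0,A-x)$. So for bounded $f$ we have $\mathrm R^{(q)}f(x)=\int f(x+y)\,\mathrm R^{(q)}(0,\dd y)$. This means $\mathrm R^{(q)}f$ is a convolution of $f$ with the reflected measure $\mathrm R^{(q)}(0,\cdot)$, which has finite total mass $1/q$. Both directions reduce to statements about this single finite measure $\mu_q:=\mathrm R^{(q)}(0,\cdot)$.

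\textbf{(i)$\Rightarrow$(ii).} If $\mu_q(\dd y)=\f^{(q)}(y)\,\dd y$ with $\f^{(q)}\in L^1(\R)$, then
\[
\mathrm R^{(q)}f(x)=\int_{\R} f(z)\,\f^{(q)}(z-x)\,\dd z .
\]
For $f\in L^\infty(\R)$ this gives
\[
|\mathrm R^{(q)}f(x)-\mathrm R^{(q)}f(x')|\le \|f\|_\infty\,\|\f^{(q)}(\cdot-x)-\f^{(q)}(\cdot-x')\|_{L^1}.
\]
The right-hand side tends to $0$ as $x'\to x$ by continuity of translations in $L^1$. This is the main analytic input, and it is standard, approximating $\f^{(q)}$ by continuous compactly supported functions. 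Note also that the integral is insensitive to changing $f$ on Lebesgue-null sets, so the operator is well defined on $L^\infty$ equivalence classes.

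\textbf{(ii)$\Rightarrow$(i).} Let $N$ be a Lebesgue-null Borel set and take $f=\1_N$. As an element of $L^\infty(\R)$, $f$ equals $0$. The hypothesis, read as meaning that $\mathrm R^{(q)}$ is well defined on $L^\infty$ classes and maps them to continuous functions, then forces $\mathrm R^{(q)}\1_N=\mathrm R^{(q)}0=0$ everywhere. Hence $\mathrm R^{(q)}(x,N)=0$ for every $x$ and every null set $N$, so $\mathrm R^{(q)}(x,\cdot)\ll$ Lebesgue, and Radon--Nikodym gives the density $\f^{(q)}(x,y)$.

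\textbf{Main obstacle.} The delicate point is the interpretation in (ii). If one only assumes that $\mathrm R^{(q)}f$ is continuous for every bounded Borel $f$, then one must work harder. In that case I would first use continuity to obtain, for a null set $N$, that $x\mapsto\mathrm R^{(q)}(x,N)=\mu_q(N-x)$ is continuous. Integrating in $x$ against Lebesgue measure and using Fubini gives
\[
\int \mu_q(N-x)\,\dd x=\int\!\!\int \1_N(x+y)\,\dd x\,\mu_q(\dd y)=0 .
\]
So the continuous nonnegative function $x\mapsto\mu_q(N-x)$ vanishes almost everywhere, hence everywhere, giving $\mu_q(N)=0$. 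This Fubini-plus-continuity argument is the step I expect to require care, since it is where spatial homogeneity is genuinely used. Finally, the statement is for every $q>0$, and each direction was proved at a fixed $q$. The resolvent equation $\mathrm R^{(q)}-\mathrm R^{(r)}=(r-q)\mathrm R^{(q)}\mathrm R^{(r)}$ shows the null sets of $\mu_q$ do not depend on $q$, so the fixed-$q$ arguments combine into the equivalence for all $q>0$.
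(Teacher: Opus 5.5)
The paper gives no proof of this statement; it is quoted from Bertoin. Your argument is correct and is essentially the standard proof of that result.

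For (i)$\Rightarrow$(ii), writing $\mathrm R^{(q)}f(x)=\int_{\R} f(z)\,\f^{(q)}(z-x)\,\dd z$ and using continuity of translations in $L^1$ is exactly right.

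For (ii)$\Rightarrow$(i), the argument you label the ``main obstacle'' is the real proof and should be presented as such. Take a Borel null set $N$ and read (ii) for bounded Borel $f$. Fubini gives $\int_{\R}\mathrm R^{(q)}\1_N(x)\,\dd x=\int_{\R}\mathrm{Leb}(N-y)\,\mu_q(\dd y)=0$. Hence the continuous nonnegative function $\mathrm R^{(q)}\1_N$ vanishes identically. Your first version, which reads $\mathrm R^{(q)}$ as well defined on $L^\infty$ classes, is valid but carries no content. Well-definedness on classes is already the statement $\mathrm R^{(q)}(x,N)=0$ for every null $N$, i.e.\ (i) itself, and continuity is never used.

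The closing resolvent-equation step is superfluous. Both (i) and (ii) quantify over every $q>0$, and you proved each implication at a fixed $q$. If you want the ``some $q$ iff all $q$'' refinement, there is a quicker route: $\mu_q(N)=\int_0^\infty e^{-qt}\,\mathbb P(X_t\in N)\,\dd t$ vanishes iff $\mathbb P(X_t\in N)=0$ for a.e.\ $t$, which does not involve $q$.

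Compared with the bare citation, your proof makes explicit that both directions rest on spatial homogeneity, $\mathrm R^{(q)}(x,\cdot)=\mu_q(\cdot-x)$. So it covers Bertoin's setting of the unkilled resolvent, which is the reading under which the statement is true.

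Be aware that in this paper the notation $\mathrm R^{(q)}(x,\dd y)=\f^{(q)}(x,y)\,\dd y$ was introduced just before for the potential killed on exiting $[0,a]$. The proposition is later invoked for such killed kernels, and there neither your argument nor the equivalence applies verbatim. Suppose $X$ has bounded variation. Then $\mathrm R^{(q)}\1(0)=W^{(q)}(0)\int_0^a W^{(q)}(a-y)\,\dd y/W^{(q)}(a)>0$, while $\mathrm R^{(q)}\1(x)=0$ for $x<0$ because the process is killed at time $0$. So (i) holds but the strong Feller property on $\R$ fails. For killed kernels, continuity on the interval itself has to be read off from the scale-function formula instead.
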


\noindent Finally, the fluctuation identities of $X$, which are used throughout the paper are given in Chapter 8 in  \cite{K2014}. In more details  for  $q \geq 0$ and $x \leq a$, it holds that 
	\begin{equation}
		\mathbb{E}_x\left(\mathrm{e}^{-q \tau_{a,X}^{+}} \mathbf{1}_{\left\{\tau_{a,X}^{+}<\tau_{0,X}^{-}\right\}}\right)=\frac{W^{(q)}(x)}{W^{(q)}(a)},  \quad 	\mathbb{E}_x\left(\mathrm{e}^{-q \tau_{0,X}^{-}} \mathbf{1}_{\left\{\tau_{0,X}^{-}<\tau_{a,X}^{+}\right\}}\right)=Z^{(q)}(x) - \frac{W^{(q)}(x)}{W^{(q)}(a)}Z^{(q)}(a) . \label{eq:ClassicalExitfromBelow} 
	\end{equation}

	\section{Main results}\label{Sec:PSR-LP Exit Identities}

In this section we derive the two-sided exit upward and downward identities and their corresponding 
	   one-sided  exit upward and downward  identities.   It will be shown that these  exit times are provided by means of a new  family of scale functions.  It is worth mentioning that due to  the behaviour of the partial resetting in $\R_-$  (resetting occurs in an upwards directions), the PSR-LP is no longer spectrally negative when it enters $\R_-$. It is therefore anticipated that a fractional form of the two-sided exit upwards for the PSR-LP in $\R_-$ will not be established as occurs in the classical case.  Before we proceed with  the fluctuation theory results, we show that $U$ satisfying  the SDE in Eq. \eqref{Eq:SDEofPSR-LP} exists and  is unique.  Let us note that existence and uniqueness of this SDE follows
	   from the fact that process $U$ is a special case of the SDE describing a Generalised Ornstein-Uhlenbeck
	   process; see \cite{B2011} for a monograph on such processes and \cite{K2019} for the case when $X$ is a stable process. Nevertheless, in the present setting the proof is considerably more elementary and  we therefore include it not only for completeness, but also because the same argument extends to Skorokhod-reflected processes; see Subsection~\ref{reflection}. 
	 	\begin{theorem}\label{sdepart}
	 There exists unique strong solution  $U$ of the SDE in Eq~\eqref{Eq:SDEofPSR-LP}. Also, it is a strong Markov process.
	 \end{theorem}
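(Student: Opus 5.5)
The plan is to construct $U$ pathwise by interlacing, using the fact that the Poisson clock $N$ has almost surely finitely many epochs $0<T_1<T_2<\dots$ on every bounded interval, with $T_i\to\infty$. Between consecutive epochs the SDE \eqref{Eq:SDEofPSR-LP} reads $\dd U_t=\dd X_t$, so the solution must be a translate of $X$. At an epoch the equation forces $\Delta U_{T_i}=-(1-p)U_{T_i-}$, i.e.\ $U_{T_i}=pU_{T_i-}$ when $X$ does not jump at $T_i$. Since $X$ and $N$ are independent and $X$ has only countably many jump times, almost surely $\Delta X_{T_i}=0$ for all $i$.

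\textbf{Construction.} Set $T_0=0$. On $[T_0,T_1)$ put $U_t=x_0+X_t$. Recursively, on $[T_i,T_{i+1})$ put
\[
U_t = p\,U_{T_i-} + X_t - X_{T_i}, \qquad U_{T_i-}=U_{T_{i-1}}+X_{T_i}-X_{T_{i-1}} .
\]
Equivalently, one has the explicit formula
\[
U_t = p^{N_t}x_0+\sum_{i=0}^{N_t} p^{N_t-i}\bigl(X_{T_{i+1}\wedge t}-X_{T_i}\bigr).
\]
This process is c\`adl\`ag and adapted to the filtration generated by $(X,N)$. It is defined for all $t$ because $T_i\to\infty$ almost surely. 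Checking that it satisfies the integral form $U_t=x_0+X_t-(1-p)\int_0^t U_{s-}\,\dd N_s$ is immediate, since the stochastic integral against $N$ is the finite sum $\sum_{T_i\le t}U_{T_i-}$.

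\textbf{Uniqueness.} Let $V$ be another c\`adl\`ag adapted solution and put $D=U-V$. Then $D_t=-(1-p)\int_0^t D_{s-}\,\dd N_s$, so $D$ is constant between epochs and satisfies $D_{T_i}=pD_{T_i-}$. Since $D_0=0$, induction over $i$ gives $D\equiv0$ on every $[T_i,T_{i+1})$, hence everywhere. This gives pathwise uniqueness, and together with the explicit construction as a measurable functional of $(x_0,X,N)$ it yields a unique strong solution.

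\textbf{Strong Markov property.} This is the step needing the most care. Let $\tau$ be an a.s.\ finite stopping time. By the strong Markov property of the L\'evy process $(X,N)$, the pair $(\tilde X_s,\tilde N_s)=(X_{\tau+s}-X_\tau,\,N_{\tau+s}-N_\tau)$ is independent of $\mathcal F_\tau$ and has the same law as $(X,N)$. The solution map is deterministic and, by uniqueness, has the flow property: $U_{\tau+s}=F(U_\tau,\tilde X,\tilde N)_s$, where $F(x,\cdot,\cdot)$ denotes the construction above started from $x$. Hence
\[
\E\bigl[f(U_{\tau+s})\,|\,\mathcal F_\tau\bigr]=\E_{U_\tau}[f(U_s)].
\]
The main obstacle is justifying the flow identity, namely that restarting the construction at $\tau$ reproduces $U$. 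This is handled by uniqueness, since both processes solve the SDE driven by $(\tilde X,\tilde N)$ with initial value $U_\tau$. One must also check measurability of $x\mapsto F(x,\cdot,\cdot)$, which follows from the explicit formula, since $U$ is affine in $x_0$.
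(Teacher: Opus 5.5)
Your proposal is correct and follows the paper's route for existence and uniqueness. The solution is built recursively over the Poisson epochs; the paper keeps the term $X_{T_k}-X_{T_k-}$ in $U_{T_k}=pU_{T_k-}+(X_{T_k}-X_{T_k-})$, whereas you drop it after noting that $X$ and $N$ a.s.\ have no common jumps, which is equally valid. Uniqueness comes, as in the paper, from the difference $D$ solving $\mathrm{d}D_t=-(1-p)D_{t-}\,\mathrm{d}N_t$ with $D_0=0$.

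The only real difference is the strong Markov property. The paper simply imports it from Section 3 of \cite{GPST2026}. You prove it directly from the strong Markov property of the L\'evy pair $(X,N)$, together with the flow identity $U_{\tau+\cdot}=F(U_\tau,\tilde X,\tilde N)$ obtained from pathwise uniqueness, and the measurability of $F$ that follows from your explicit affine formula. This self-contained argument is sound.
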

	 
	 \begin{proof}
	 Recall that  $
	 N_t := \sum_{k \geqslant 1} \1_{\{T_k \leq t\} }$, 	with $0=T_0<T_1<T_2<\cdots<T_n$,  such that $T_n\rightarrow \infty$, as $n\rightarrow \infty$. 
	 	Integrating Eq.~\eqref{Eq:SDEofPSR-LP}  over the set $\left\{T_k\right\}_{k \geqslant 0}$ gives	 	
\[	 		 	U_{T_k}=\bigl(X_{T_k}-X_{T^-_{k}}\bigr)+pU_{T^-_{k}}. 
\]
Hence, $U_{T_k}$  is determined by $U_{T^-_k}$.  On the other hand, integrating  Eq. \eqref{Eq:SDEofPSR-LP} over the open interval $[T_{k-1}, T_k)$ and noticing that the integral term w.r.t. $N_t$ is 0,  gives
	 	\[
	 	U_{T^-_{k}}-U_{T_{k-1}}=X_{T^-_{k}}-X_{T_{k-1}}
	 	\]
	 	which shows  that 	$U_{T^-_{k}}$ is determined  by   $U_{T_{k-1}}$ (which itself is determined by $U_{T_{k-1}^-}$). Hence, by induction $U_{T_k}$ is determined for all $ k\geq 0$.
	 	Now, for any  $t>0, t \neq T_k$,  there is a unique $k$ s.t.~$T_k<t<T_{k+1}$ for which integrating Eq.~\eqref{Eq:SDEofPSR-LP} over $\left(T_k, t\right]$ gives
	 	\[
	 	U_t-U_{T_k}=X_t-X_{T_k}. 
	 	\]
	 Hence, $U_t$ is determined by $U_{T_k}$ and therefore $U$ exist for all $t$. To show the uniqueness, let $U_t^i$, $i=1,2$, be two solutions of Eq. \eqref{Eq:SDEofPSR-LP}  with the same initial condition and define $D_t:=U_t^1-U_t^2$. Then,  from  Eq.~\eqref{Eq:SDEofPSR-LP}, it holds that 
	 \[\dd D_t=-(1-p)D_{t^-}\dd N_t, \quad D_0=0, \]
	 which has an  explicit  solution of the form $D_t=p^{N_t}D_0$ for any $t\geq 0$ (with $p^{N_t}$  the Dol\'eans-Dade exponent) and therefore the solution is unique. The strong Markov property follows from Section 3 of \cite{GPST2026}.
	 \end{proof}
 \subsection{A universal partial resetting integral equation}
 \label{sec3.1}
We note that the fluctuation identities in the subsequent subsections will be provided by means of solutions of integral equations. Therefore, we shall now derive a  general solution of   a  (general) integral equation that will be used as a universal tool in all the subsequent exit problems. This will be achieved by using  the    the Banach fixed point theorem in the spirit of Chapter 2 in Hochstadt \cite{H1989} or in Kolmorogov and Fomin  \cite{KF1975}, Section 8 pp.~66-77.

Consider $(C_\mathrm{b}(\B), || \cdot ||)$  a Banach  space of bounded continuous functions with domain $\B$ in $\mathbb R$
where $|| \cdot || := \sup_{x \in \B} | \cdot |$ is the standard supremum norm.  Without any loss of generality, the derivation of the exit identities only requires three types of intervals in $\R$. Thus we enforce the following assumption on set $\B$ for the rest of the paper.
	
	\medskip
	
	\noindent \textbf{(H1):} For $a,b \in \R$ s.t.~$b \leq a$, we let $\B = [b, a]$, $(-\infty, a]$ or $[b, \infty)$.
	
	\medskip
	
	\noindent In the classical theory of integral equations, it is well known that the resolvent series (which sometimes  called the Neumann series) as well as its associated parameters are completely specified and determined by a recursion of some starting kernel.  Given that each of the exit problems are associated with different starting kernels, we shall make use of the following definition to ensure that the identities in the remainder of the paper are formulated concisely.
	\begin{definition}
		\label{Def:ResolventDefn}
	Let $p \in [0,1]$ and $q >0$.  We fix $[u_1,u_2] \subset \B$ Then, for  $\gamma \in \R$ and $x, \, y \in \B$, we define the resolvent series
		\begin{equation}
			\overline{\V}^{(p)}_\gamma [\f^{(q)}](x,y) :=  \sum^{\infty}_{k=1} \gamma^k \V ^{(p)}_{k}[ \f^{(q)}](x,y) \label{Eq:ResolventNeumannSeries}
		\end{equation}
in terms of the recursion
		\begin{align}
			\V_{1}^{(p)}[\f^{(q)}](x,y) &:=\f^{(q)}(x,y), \notag \\
			\V_{n}^{(p)}[\f^{(q)}](x,y) & : = \int^{u_2}_{u_1} \f^{(q)}(x,z) \, \V_{n-1}^{(p)}[\f^{(q)}](pz,y)\, \dd z, \quad n \geq 2, \label{Eq:ResolventforScaleFuncs}
		\end{align}
		where $x,y \in  [u_1,u_2]\subset \B$,  and $\f^{(q)}$ denotes the particular starting kernel.
	\end{definition}
	\noindent As has been alluded to, the starting kernels $r^{(q)}$ chosen for our purposes will always have the special property of being a density of a potential measure.
\begin{remark}
	\label{ap4}\upshape{
We point out that the usage of the potential density $\f^{(q)}$  in the resolvent series from Eq.~\eqref{Eq:ResolventNeumannSeries} instead of the underlying scale functions $W^{(q)}$ is mathematically necessary. The reason for this, as will be seen in the proof of  Theorem \ref{Thm:Res-UAuxiliaryFuncIsFixedPoint} and Remark \ref{Rem:WellDefinedIntegralOperators} below, is that the scale functions don't  satisfy the conditions of the Banach fixed point theorem which is used to solve the individual integral equations whereas the potential measures succeed due to their useful boundedness properties.
}
	\end{remark}
	\noindent We shall now state the universal integral partial resetting equation that will be used in the subsequent subsections to solve the exit problems.
	\begin{theorem}\label{Thm:Res-UAuxiliaryFuncIsFixedPoint}
	 Let $p \in (0,1)$, $q > 0$ and $\gamma \in \R$ s.t.~$0<|\gamma| < q$, and $[u_1,u_2] \subset \B$ s.t.~$[pu_1,pu_2] \subset \B$. For $f$ and $h \in  C_\mathrm{b}(\B)$, consider the integral operator $\mathcal{A} : C_\mathrm{b}(\B) \rightarrow C_\mathrm{b}(\B)$ s.t.~
		\begin{equation}
			\mathcal{A}f(x) = h(x) + \gamma \int^{u_2}_{u_1} \f^{(q)}(x,y) f(py) \dd y.   \label{Eq:Res-MainIntegralOperator}
		\end{equation}
		Then, the (unique) fixed point $g \in C_\mathrm{b}(\B)$ of $\mathcal{A}$ is of the form
		\begin{equation}
			g(x) = h(x) + \int^{u_2}_{u_1} \overline{\V}^{(p)}_\gamma[\f^{(q)}](x,y)  \, h(py)\, \dd y,\label{solop}
		\end{equation} 
		where $\overline{\V}^{(p)}_\gamma[r^{(q)}]$ is given in Definition \ref{Def:ResolventDefn}. 
		%
	\end{theorem}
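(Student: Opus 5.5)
The plan is to apply the Banach fixed point theorem to $\mathcal{A}$ on $(C_\mathrm{b}(\B),\|\cdot\|)$ and then identify the fixed point through Picard iteration, which yields the resolvent series of Definition \ref{Def:ResolventDefn}.

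The key bound comes from the fact that $\f^{(q)}(x,\cdot)$ is the density of a $q$-potential measure (of a process killed on exiting a set). This gives, for every $x$,
\[
\int_{u_1}^{u_2} \f^{(q)}(x,y)\,\dd y \le \int_0^\infty \ee^{-qt}\,\dd t = \frac{1}{q}.
\]

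\textbf{Step 1: $\mathcal{A}$ is well defined.} For $f\in C_\mathrm{b}(\B)$, the assumption $[pu_1,pu_2]\subset\B$ makes $y\mapsto f(py)$ bounded and measurable on $[u_1,u_2]$. Extending it by zero outside $[u_1,u_2]$ gives an $L^\infty$ function. By the strong Feller property in Proposition \ref{Prop:BertoinProp}, the map $x\mapsto\int \f^{(q)}(x,y)f(py)\dd y$ is then continuous. It is bounded by $\|f\|/q$. Hence $\mathcal{A}f\in C_\mathrm{b}(\B)$.

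\textbf{Step 2: $\mathcal{A}$ is a contraction.} For $f_1,f_2\in C_\mathrm{b}(\B)$,
\[
\|\mathcal{A}f_1-\mathcal{A}f_2\|\le |\gamma|\sup_x\int_{u_1}^{u_2}\f^{(q)}(x,y)\,\dd y\,\|f_1-f_2\|\le \frac{|\gamma|}{q}\|f_1-f_2\|.
\]
Since $|\gamma|<q$, the constant is strictly less than $1$. The Banach fixed point theorem then gives a unique fixed point $g$, and it is the uniform limit of the iterates $\mathcal{A}^n h$.

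\textbf{Step 3: identify the fixed point.} Write $(Kf)(x):=\int_{u_1}^{u_2}\f^{(q)}(x,y)f(py)\dd y$, so that $\mathcal{A}f=h+\gamma Kf$. Then
\[
\mathcal{A}^n h=\sum_{k=0}^{n}\gamma^kK^kh \quad\text{(up to a harmless extra term } \gamma^{n}K^{n}h).
\]
By induction, using Fubini with nonnegative kernels and bounded $h$, I will show
\[
(K^kh)(x)=\int_{u_1}^{u_2}\V^{(p)}_k[\f^{(q)}](x,y)\,h(py)\,\dd y .
\]
The base case $k=1$ is the definition. For the inductive step,
\[
K^{k+1}h(x)=\int \f^{(q)}(x,z)\,(K^kh)(pz)\,\dd z=\int\!\!\int \f^{(q)}(x,z)\,\V_k(pz,y)\,\dd z\,h(py)\,\dd y,
\]
which matches the recursion \eqref{Eq:ResolventforScaleFuncs}. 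This needs $pz\in[u_1,u_2]$ for the kernel to be evaluated; I will take the kernel to be defined on $\B$, and the hypothesis $[pu_1,pu_2]\subset \B$ covers this.

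\textbf{Step 4: pass to the limit.} By induction, $\int\V_k(x,y)\,\dd y\le q^{-k}$, so $\sum_k |\gamma|^k\int \V_k(x,y)\,\dd y\le \sum_k(|\gamma|/q)^k<\infty$ uniformly in $x$. Monotone or dominated convergence then allows summation inside the integral. This gives $g=h+\int\overline{\V}^{(p)}_\gamma[\f^{(q)}](x,y)\,h(py)\,\dd y$, with the series converging absolutely.

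\textbf{Main obstacle.} The most delicate point is the uniform mass bound $\int\f^{(q)}(x,y)\dd y\le 1/q$ together with continuity of $\mathcal{A}f$. This requires that the kernel really is a potential density, evaluated at the contracted points $pz$ and defined on all of $\B$; this is exactly Remark \ref{ap4}'s reason for using potential densities instead of $W^{(q)}$.
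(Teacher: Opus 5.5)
Your proposal is correct and follows the paper's proof essentially step for step: the contraction estimate from the potential-mass bound $\int_{u_1}^{u_2}\f^{(q)}(x,y)\,\dd y\le 1/q$, the Banach fixed point theorem, the Neumann expansion $g=\sum_{k\ge 0}\gamma^k\mathcal{V}^k h$, and the inductive identification of $\mathcal{V}^k h$ with $\int_{u_1}^{u_2}\V^{(p)}_k[\f^{(q)}](x,y)\,h(py)\,\dd y$. Your explicit bound $\int_{u_1}^{u_2}\V^{(p)}_k[\f^{(q)}](x,y)\,\dd y\le q^{-k}$ (the content of Remark \ref{rem2}) and your observation that the recursion evaluates the kernel at points $pz\in\B$ make explicit what the paper uses implicitly, to justify summing the series inside the integral.
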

	
	\begin{remark}\label{Rem:WellDefinedIntegralOperators}
		\upshape{
	\noindent 
	 The image of the mapping of $\mathcal{A}$ in Theorem \ref{Thm:Res-UAuxiliaryFuncIsFixedPoint} is a consequence of the potential density and  that the constituent functions $f,h \in C_\mathrm{b}(\B)$. Indeed, for $q >0$, since $\f^{(q)}$ is a density of a potential measure of an killed SNLP, we have for $[u_1,u_2] \subset \B$ that  
		\begin{align}
			\int^{u_2}_{u_1} \f^{(q)}(x,y) \dd y &\leq \E_x\Bigl(\int^\infty_0 \ee^{-qt} \1_{\{X_t \in [u_1,u_2]\}} \, \dd t \Bigr) \leq \E_x\Bigl(\int^\infty_0 \ee^{-qt} \, \dd t \Bigr) = \frac{1}{q}, \label{Eq:ResolventInequality}
		\end{align}
		and hence, by the triangle inequality, that $
		|| \mathcal{A} f(x) || \leq || h(x) || + \frac{|\gamma|}{q} || f(x) || < \infty, $
		since $f$ and $h$ are bounded. The continuity of $\mathcal{A} f(x)$,  follows from the strong Feller property in Proposition \ref{Prop:BertoinProp}, and the continuity of $f$ and $h$, and thus $\mathcal{A} f(x) \in C_\mathrm{b}(\B)$}
	\end{remark}
\begin{remark}
	\label{rem2}\upshape{
		Observe from Definition \ref{Def:ResolventDefn} that since $h \in C_b(\B)$, for some $M \geq 0$, $k \geq 2$ and $y \in [u_1,u_2]$, we have 
		\[\Bigl| \Bigl| \gamma^k\int_{u_1}^{u_2}\V_{n}^{(p)}[\f^{(q)}](x,y)h(py)\dd y\Bigr| \Bigr| \leq |\gamma|^k\frac{M}{q}\Bigl| \Bigl|\int^{u_2}_{u_1}  \V_{n-1}^{(p)}[\f^{(q)}](pz,y)\, \dd y  \Bigl| \Bigl| \leq \Bigl(\frac{|\gamma|}{q}\Bigr)^kM,\]
	which is independent of $p$. Therefore, the integral term in Eq. \eqref{solop} converges uniformly in  $C_\mathrm{b}(\B)$. Also note that for $h\equiv 1$, the above guarantees that  the series in Eq. \eqref{Eq:ResolventNeumannSeries} converges in $C_b(\B)$.}
\end{remark}
	
	\begin{proof}[Proof of Theorem \ref{Thm:Res-UAuxiliaryFuncIsFixedPoint}]
		Let us denote $\mathcal{A}^0f(x) = f(x)$ and  $\mathcal{A}^{n+1}f(x) = \mathcal{A}[\mathcal{A}^{n}f(x)]$ for $n \geq 0$. Following similar arguments as in  Hochstadt \cite{H1989}, by using the inequality in Eq.~\eqref{Eq:ResolventInequality} and that $[pu_1,pu_2] \subset \B$, we have for $f_1,f_2 \in C_\mathrm{b}(\B)$ that
		\begin{align}
			\bigl| \mathcal{A} f_1(x) - \mathcal{A} f_2(x) \bigr| 
			&\leq  |\gamma| \int^{u_2}_{u_1} \f^{(q)}(x,y) \, \bigl| f_1(py) - f_2(py)  \bigr|  \dd y 
			\leq \; \frac{|\gamma|}{q} \, \sup_{x \in \B} | f_1(x) - f_2(x)|. \notag
		\end{align}
		Thus, under the assumption $|\gamma|<q$, we obtain from the above inequality that
		\begin{align}
			\bigl| \bigl| \mathcal{A}f_1(x) - \mathcal{A}f_2(x) \bigr|  \bigr| \leq \frac{|\gamma|}{q}  \; || f_1(x) - f_2(x) ||,  \label{Eq:Res-ConvergenceOfIntegralOperator}
		\end{align}
		from which we conclude, for any $p \in (0,1)$, that $\mathcal{A} f(x)$ is a contraction mapping. Hence, by Theorem 1 pp.~26 in Hochstadt \cite{H1989}, there exists a unique fixed point of $\mathcal{A}$ that can be derived by taking $\lim_{n \rightarrow \infty} \mathcal{A}^nf$ for any choice of starting function $f \in C_\mathrm{b}(\B)$.
		Indeed, by using Eqs.~\eqref{Eq:ResolventInequality} and \eqref{Eq:Res-ConvergenceOfIntegralOperator}, we obtain inductively, for $n \geq 0$, that
		\begin{align} 
			\bigl| \bigl| \mathcal{A}^{n+1}f_1(x) - \mathcal{A}^{n+1}f_2(x) \bigr|  \bigr| &\leq  \Bigl| \Bigl| \gamma \int^{u_2}_{u_1} \f^{(q)}(x,y) \, \bigl| \mathcal{A}^{n}f_1(py) - \mathcal{A}^nf_2(py) \bigr| \, \dd y \Bigr| \Bigr| \notag \\
			&\leq  \Bigl| \Bigl| \gamma \int^{u_2}_{u_1} \f^{(q)}(x,y) \, \dd y \Bigr| \Bigr| \cdot \, \bigl| \bigl| \mathcal{A}^{n}f_1(x) - \mathcal{A}^nf_2(x) \bigr|  \bigr| \notag \\
			&\leq \Bigl(\frac{|\gamma|}{q} \Bigr)^{n+1} \;  || f_1(x) - f_2(x) ||,  \label{Eq:ConvergenceOfIntegralOperatorForalln}
		\end{align}
		and hence that $\lim_{n \rightarrow \infty} \mathcal{A}^nf_1 = \lim_{n \rightarrow \infty} \mathcal{A}^nf_2$ exists. 
		
	\noindent 	We now derive this unique fixed point iteratively. Let the integral operator
		\begin{equation}
			\mathcal{V}f(x) := \int^{u_2}_{u_1} \f^{(q)}(x,y) f(py) \dd y, \notag 
		\end{equation}
		such that  $	\mathcal{A}f(x) = h(x) + \gamma \mathcal{V}f(x)$ and iteratively  
		\begin{align}
			\mathcal{A}^n f(x) &= h(x) + \sum^{\infty}_{k=1} \gamma^k \mathcal{V}^kh(x), \notag
		\end{align}
		with  $\gamma^{n} \mathcal{V}^{n}f(x) \rightarrow 0$ as $n \rightarrow \infty$. Therefore, taking $n \rightarrow \infty$ of the above equation yields that the unique fixed point of $\mathcal{A}$ has the form
		\begin{align}
			g(x) = \lim\limits_{n\rightarrow \infty}\mathcal{A}^nf(x) =  h(x) + \sum^{\infty}_{k=1} \gamma^k \mathcal{V}^kh(x). \label{Eq:Res-FinalSolutionThm1} 
		\end{align}
	Now, by induction,    the integral operator $\mathcal{V}^n$ can be expressed in terms of the recursive functions $\V^{(p)}_n[r^{(q)}](x,y)$ in  Eq.~\eqref{Eq:ResolventforScaleFuncs}, i.e. that 
		\begin{equation}
			\mathcal{V}^nh(x) = \int^{u_2}_{u_1} \V^{(p)}_n[\f^{(q)}](x,y)\,h(py)\, \dd y.
		\end{equation}
		 Therefore, by substituting the above back into Eq.~\eqref{Eq:Res-FinalSolutionThm1} and using Definition \ref{Def:ResolventDefn}  and the fact that the below series is absolutely convergent 
the result follows. 
	 \end{proof}
		\begin{remark}\label{valuesofr}
	\upshape{The integral operator introduced in Theorem~\ref{Thm:Res-UAuxiliaryFuncIsFixedPoint}, together with its unique fixed point given in \eqref{solop}, will be used repeatedly throughout the following subsections. Indeed, the fluctuation identities derived therein will be shown to satisfy integral equations that fall within  Theorem~\ref{Thm:Res-UAuxiliaryFuncIsFixedPoint}. The specific form of the potential density $\f^ 
		{(q)}$, however, depends on the exit problem under consideration (e.g., one- or two-sided, upward or downward exit), since the corresponding scale function representations differ in each case. Accordingly, in the subsequent subsections, the generic kernel $\f^ 
		{(q)}$
		will be replaced by the appropriate potential density, expressed explicitly in terms of scale functions, as presented below.
		
		\medskip
		
			\noindent
				\noindent \textbf{(S1):} For $a,b \in \R$ s.t.~$b \leq a$, we let $\f^{(q)}(x,y)\equiv	r^{(q)}(x,y)$, 	$\overline{	r}^{(q)}(x,y)$, $\underline{r}^{(q)}(x,y) $, where 
			\begin{align}
				r^{(q)}(x,y) &:= \frac{W^{(q)}(x-b)}{W^{(q)}(a-b)}W^{(q)}(a-y) - W^{(q)}(x-y), \notag \\
				\overline{	r}^{(q)}(x,y)	 &:= \ee^{-\Phi_q (a-x)}W^{(q)}(a-y) - W^{(q)}(x-y), \notag \\
				\underline{r}^{(q)}(x,y) &:= \ee^{-\Phi_q (y-b)} W^{(q)}(x-b)  - W^{(q)}(x-y). \notag
			\end{align}	 
			}
	\end{remark}
	\subsection{Exit identities }\label{Subsubsec:Res-ExitUpwardsPostiveHalf}
	In this section, we shall derive an explicit identity for the two and the one sided exit  problems. We note that due to the behaviour of creeping versus jumping, the derivation of the two exit times encompasses different properties requiring typically  different techniques. Here, we propose a unifying framework for exit times of both one- and two-sided exit problems. In the rest of the paper $\mathbb E_x$ denotes the expectation with respect to $\mathbb P_x$. All fluctuation identities derived below are based on a new family of scale functions operators,  $\mathcal{W}_{b,a}^\p$,   $\mathcal{Z}_{b,a}^\p:\mathcal  R_{b,a }\to C_{\mathrm{b}}([b,a])	$, with $\mathcal R_{a,b}$  the class of admissible killed-potential densities appearing in \textbf{(S1)}, be the  operators of the  forms, for $a$,  $b$,  $x\in \mathbb R$ s.t.~$a>b$, 
		\begin{align}
				\begin{split}
	{\mathcal{W}_{b,a}^\p}\f^{(q)}(x)&= {W}^{(q+\lambda)}(x-b)+\int_{b\vee(a\wedge bp^{-1})}^{a\wedge(ap^{-1}\vee b)}\overline{\V}^{(p)}_\lambda[\f^{(q)}](x,y)W^{(q+\lambda )}(py -b)\dd  y, \\
		{\mathcal{Z}_{b,a}^\p}\f^{(q)}(x)&= {Z}^{(q+\lambda)}(x-b)+\int_{b\vee(a\wedge bp^{-1})}^{a\wedge(ap^{-1}\vee b)}\overline{\V}^{(p)}_\lambda[\f^{(q)}](x,y)Z^{(q+\lambda )}(py -b)\dd  y, 
			\label{Eq:PSR-ScaleFunc}
		\end{split}
		\end{align} 
	where $x\wedge(\vee) y=\text{\upshape {min}(max)}(x,y)$ and   $\overline{\V}^{(p)}_\lambda[\f^{(q+\lambda)}]$ as in  Definition \ref{Def:ResolventDefn}, with starting kernel the potential density $\f^{(q)}$,  defines in Section \ref{prelim}. We note that in a similar manner as in Remark  \ref{valuesofr}, depending on the exit problem under consideration $\f^{(q)}$ will take the forms  given in  \textbf{(S1)}. We now state our main results, given in Theorems \ref{Thm:Res-TwoSidedExitUpwards-PositiveHalfAxis} and \ref{Thm:Res-TwoSidedExitDownwards-PositiveHalfAxis}, which study upward and downward  two-sided exit  crossing problems, and Propositions \ref{Thm:Res-OneSidedExitDownwards-PositiveHalfAxis} and \ref{Thm:Res-OneSidedExitUpwards-PositiveHalfAxis}, which study one-sided exit problems. We start by discussing the upward two-sided exit problem. All of these exit problems are related to the following exit times 
	\[
	\tau_{a,U}^{+(-)}:=\inf \left\{t>0: U_t>(<)\; a\right\}.\]
	 We note that in the following results we consider $p \in (0,1)$ in order to avoid, for now,  the discussion of the total resetting case. 
	\begin{theorem}[Upward two-sided exit problem]\label{Thm:Res-TwoSidedExitUpwards-PositiveHalfAxis}
	Let $p \in (0,1)$,  $q >0 $ and $\lambda \geq 0$. Then,  $x \in [b,a]$ and $a>b \in \R$  for the following identity holds 
	\begin{align}
	\E_x \Bigl( \ee^{-q \tau_{a,U}^+} \1_{\{\tau_{a,U}^+ < \tau_{b,U}^-\}} \Bigr)= \frac{{\mathcal{W}}_{b,a}^{\p}r^{(q+\lambda)}(x)}{{\mathcal{W}}_{b,a}^{\p}r^{(q+\lambda)}(a)}+ \int_{ap^{-1}\vee b}^{a}\overline{\V}^{(p)}_\lambda[r^{(q+\lambda)}](x,y) \dd y \1_{\{a, b\in \R_-\}}, 
	\label{Eq:PSR-ExitUpwards} 
	\end{align} 
where 
$r^{(q)}$   given in   \textbf{\upshape {(S1)}}.
	\end{theorem}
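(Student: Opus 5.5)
We condition on the first resetting epoch $T_1$, which is an independent exponential time with rate $\lambda$. Before $T_1$ the process $U$ coincides with $X$. Write $\tau^+:=\tau_{a,X}^+$ and $\tau^-:=\tau_{b,X}^-$ for the exit times of the underlying process $X$ (shifted to the interval $[b,a]$), and set
\[
\phi(x):=\E_x\bigl(\ee^{-q\tau_{a,U}^+}\1_{\{\tau_{a,U}^+<\tau_{b,U}^-\}}\bigr).
\]
By the strong Markov property of $U$ (Theorem \ref{sdepart}), the event splits into two cases.

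\textbf{Case 1:} $X$ exits $[b,a]$ upwards before $T_1$. This contributes $\E_x(\ee^{-(q+\lambda)\tau^+}\1_{\{\tau^+<\tau^-\}})=W^{(q+\lambda)}(x-b)/W^{(q+\lambda)}(a-b)$, by \eqref{eq:ClassicalExitfromBelow} applied to $X-b$.

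\textbf{Case 2:} $T_1$ occurs first, with $X_{T_1-}=y\in[b,a]$. The process then jumps to $py$ and restarts. This contributes $\lambda\int_b^a r^{(q+\lambda)}(x,y)\,\phi(py)\,\dd y$, using the killed potential density \eqref{eq:ClassicalKilledPotential} shifted by $b$, which is exactly $r^{(q+\lambda)}$ from \textbf{(S1)}.

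We must then treat carefully where $py$ lands. If $py>a$ (possible only when $a<0$, i.e. $y<ap^{-1}$), then $\phi(py)=1$, since the upward reset overshoots $a$ and the exit is immediate at zero cost. If $py<b$ (possible only when $b>0$, i.e. $y<bp^{-1}$), then $\phi(py)=0$. Otherwise $py\in[b,a]$, which happens exactly for $y$ in $[u_1,u_2]:=[b\vee(a\wedge bp^{-1}),\,a\wedge(ap^{-1}\vee b)]$. This yields the integral equation
\[
\phi(x)=\frac{W^{(q+\lambda)}(x-b)}{W^{(q+\lambda)}(a-b)}+\lambda\int_{ap^{-1}\vee b}^{a}r^{(q+\lambda)}(x,y)\,\dd y\;\1_{\{a,b\in\R_-\}}+\lambda\int_{u_1}^{u_2}r^{(q+\lambda)}(x,y)\,\phi(py)\,\dd y .
\]
A bookkeeping check is needed to confirm that the overshoot region $[ap^{-1}\vee b,a]$ is nonempty only when $a<0$, and that this matches the indicator in \eqref{Eq:PSR-ExitUpwards}. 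The mixed-sign case $b<0<a$ must also be verified: there $py\in[b,a]$ always, and $[u_1,u_2]=[b,a]$.

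The equation has exactly the form of Theorem \ref{Thm:Res-UAuxiliaryFuncIsFixedPoint}, with $\gamma=\lambda$, kernel $r^{(q+\lambda)}$, and $h$ equal to the first two terms. The contraction condition requires $\lambda<q+\lambda$, which holds since $q>0$. By \eqref{Eq:ResolventInequality} the total mass of $r^{(q+\lambda)}$ is at most $1/(q+\lambda)$, so the conditions of Theorem \ref{Thm:Res-UAuxiliaryFuncIsFixedPoint} are met. We also check that $[pu_1,pu_2]\subset[b,a]$ and that $\phi$ is bounded and continuous; continuity follows from the strong Feller property (Proposition \ref{Prop:BertoinProp}). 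The unique fixed point \eqref{solop} then gives
\[
\phi(x)=\frac{\mathcal W^{p}_{b,a}r^{(q+\lambda)}(x)}{W^{(q+\lambda)}(a-b)}+\Bigl[\lambda\int_{ap^{-1}\vee b}^a r^{(q+\lambda)}(x,y)\,\dd y+\int_{u_1}^{u_2}\overline{\V}^{(p)}_\lambda[r^{(q+\lambda)}](x,z)\,\lambda\int_{ap^{-1}\vee b}^a r^{(q+\lambda)}(pz,y)\,\dd y\,\dd z\Bigr]\1_{\{a,b\in\R_-\}}.
\]
Using the recursion \eqref{Eq:ResolventforScaleFuncs}, the bracket collapses to $\int_{ap^{-1}\vee b}^a\overline{\V}^{(p)}_\lambda[r^{(q+\lambda)}](x,y)\,\dd y$, with the series extended to the second argument range.

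The main obstacle is the normalisation: identifying $W^{(q+\lambda)}(a-b)$ with $\mathcal W^{p}_{b,a}r^{(q+\lambda)}(a)$. The plan is to use $r^{(q+\lambda)}(a,y)=0$ for all $y$, which follows directly from the formula in \textbf{(S1)}. Every term $\V_n^{(p)}[r^{(q+\lambda)}](a,\cdot)$ therefore vanishes, since each begins with the kernel evaluated at $x=a$. Hence $\mathcal W^{p}_{b,a}r^{(q+\lambda)}(a)=W^{(q+\lambda)}(a-b)$, and the bracketed term also vanishes at $x=a$, consistent with $\phi(a)=1$.

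One subtlety remains. For unbounded variation, $\tau_{a,X}^+$ started at $a$ is $0$, so $\phi(a)=1$ and the identification is consistent. For bounded variation, continuity of $\phi$ at $a$ needs a separate check, possibly through a limit $x\uparrow a$. Finally, the fixed point from Theorem \ref{Thm:Res-UAuxiliaryFuncIsFixedPoint} coincides with $\phi$ by uniqueness in $C_\mathrm{b}$, which requires only that $\phi$ be continuous; this is the point I expect to need the most care.
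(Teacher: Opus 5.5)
Your proposal is correct and follows the paper's proof essentially step for step. Both arguments use the first-epoch renewal equation with kernel $r^{(q+\lambda)}$, split $y$ according to whether $py$ lands below $b$, above $a$, or in $[b,a]$, apply Theorem \ref{Thm:Res-UAuxiliaryFuncIsFixedPoint} with $\gamma=\lambda$, collapse the extra term in the case $a,b\in\R_-$ via the recursion, and normalise using $r^{(q+\lambda)}(a,\cdot)=0$.

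Two small remarks. First, the overshoot condition should read $y>ap^{-1}$, not $y<ap^{-1}$; the region $[ap^{-1}\vee b,a]$ you actually use is right. Second, your bounded-variation worry at $x=a$ is moot: $0$ is regular for $(0,\infty)$ for every spectrally negative L\'evy process, since $\E_x(\ee^{-q\tau^+_{a,X}})=\ee^{-\Phi_q(a-x)}$ equals $1$ at $x=a$. Hence $\phi(a)=1$ and the renewal equation holds there directly.
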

	
	\begin{remark}\upshape{
		Notice that the above identity in the case that $a, \, b\in \R_-$ is not in a fractional form as in the cases of $a, \, b \in \R_+$ and $a\in \R_+$, $b\in \R_-$. This is largely expected since the exit upwards in the negative domain can occur by a reset towards zero which essentially yields an upward jump. 
	Hence, 	the lack of creeping is thus the main reason why the case $a,\, b \in \R_-$  does not have a fractional form. Additionally, from  Eq.~\eqref{Eq:PSR-ExitUpwards}, for $a,\, b \in \R_-$, it should be clear since $X$ is a SNLP that the exit identity of $U$ is decomposed into a term where crossing can occur by creeping (the fraction term) and the (integral) term which accounts for crossing by resetting.   }
	\end{remark}

	\begin{proof}[Proof of Theorem \ref{Thm:Res-TwoSidedExitUpwards-PositiveHalfAxis}]
		Let $a,b \in \R$ and  $g_{a,b}^{(q)}(x) := 	\E_x \bigl( \ee^{-q \tau_{a,U}^+} \1_{\{\tau_{a,U}^+ < \tau_{b,U}^-\}} \bigr) $. Then, denoting by $T \overset{\dd}{=} T_1$ the next Poisson epoch,  conditioning on $T$ and noticing that $\{U_t : t < T\} \overset{\dd }{=} \{X_t : t < T\}$, we  get
		\begin{align}
			g_{a,b}^{(q)}(x) 
			&= \E_x \Bigl( \ee^{-(q+\lambda) \tau_{a,X}^+} \1_{\{\tau_{a,X}^+ < \tau_{b,X}^-\}} \Bigr) + \E_x \Bigl( \ee^{-q T} \1_{\{T < \tau_{a,X}^+ \wedge \tau_{b,X}^- \}} g_{a,b}^{(q)}(pX_T) \Bigr) \notag \\
			&= \frac{W^{(q+\lambda)}(x-b)}{W^{(q+\lambda)}(a-b)} + \lambda \int^a_{b} \E_x \Bigl( \int^\infty_0 \ee^{-(q+\lambda)t} \1_{\{X_t \in \dd y, \, t < \tau_{a,X}^+ \wedge \tau_{b,X}^- \}} \dd t \Bigr) g_{a,b}^{(q)}(py) \notag \\
			&= \frac{W^{(q+\lambda)}(x-b)}{W^{(q+\lambda)}(a-b)} + \lambda \int^a_{b}  r^{(q+\lambda)}(x,y) \, g_{a,b}^{(q)}(py) \dd y, \label{Eq:PSR-ExitUpwardsMainRenewal} 
		\end{align}
		where we have used Eq.~\ \eqref{eq:ClassicalKilledPotential},  along with \textbf{(S1)}. Now, we shall use the above integral (renewal-type) equation to solve the Eq. \eqref{Eq:PSR-ExitUpwardsMainRenewal}.
		\vspace{0.2cm}\\
		(i) Consider $a,b \in \R_+$. Noticing for $a > bp^{-1}$ that $g_{a,b}^{(q)}(py) = 0$,  $y<bp^{-1}$ and for  $bp^{-1} \geq a >b$  that $g_{a,b}^{(q)}(py) = 0$ for all $y \in [b,bp^{-1})$, we get that  Eq. \eqref{Eq:PSR-ExitUpwardsMainRenewal} is reduced to 
			\begin{equation}
			g^{(q)}_{a,b}(x) = \frac{{W}^{(q+\lambda)}(x-b)}{{W}^{(q+\lambda)}(a-b)}  + \lambda \int^a_{a \wedge bp^{-1} } r^{(q+\lambda)}(x,y) \, g_{a,b}^{(q)}(py) \dd y. \label{Eq:Res-ResolventIntegralEqFinal}
		\end{equation}
		Now, observe that $g_{a,b} \in C_\mathrm{b}([b,a])$ since $g_{a,b}^{(q)}(x) \leq 1$, and that continuity follows by using Eq.~\eqref{Eq:Res-ResolventIntegralEqFinal},  and the strong Feller property from Proposition \ref{Prop:BertoinProp}.
		Further,  from Eq.~\eqref{Eq:Res-ResolventIntegralEqFinal} it holds that   $g_{a,b}^{(q)}(x) = \mathcal{A}g_{a.b}^{(q)}(x)$ is the fixed point of the above integral operator, and thus from Theorem \ref{Thm:Res-UAuxiliaryFuncIsFixedPoint}, we get that 
			\begin{equation*}
			g_{a,b}(x)	= \bigl[ {W}^{(q+\lambda)}(a-b) \bigr]^{-1}\Bigl[{W}^{(q+\lambda)}(x-b)+ \int^a_{a\wedge bp^{-1}} \overline{\V}^{(p)}_\lambda[r^{(q+\lambda)}](x,y) {W}^{(q+\lambda)}(py-b) \dd y \Bigr], \label{Eq:Res-AlmostFinalSolutionThm1} 
		\end{equation*}
		from which, by noticing that $\overline{\V}^{(p)}_\lambda[r^{(q+\lambda)}](a,y)=0$, the results follows immediately. 
		\vspace{0.2cm}\\
		\noindent 	(ii) Consider $a\in \R_+$, $b\in \R_-$. This case is exactly similar to (i) with the only difference that the integral limits are from $a$ to $b$. 
		\vspace{0.2cm}\\
					\noindent 	(iii)  We now consider $a,b \in \R_-$. Noticing  for $ap^{-1} > b$, that  $g_{a,b}^{(q)}(py) = 1$ for $y \in [ap^{-1},a]$ and for  $ap^{-1} \leq b < a$ that  $g_{a,b}^{(q)}(py) = 1$ for all $y \in [b,a)$,  it yields  that  Eq. \eqref{Eq:PSR-ExitUpwardsMainRenewal} can be written as 
			\begin{align}
			g^{(q)}_{a,b}(x) &= \frac{W^{(q+\lambda)}(x-b)}{W^{(q+\lambda)}(a-b)} + \lambda \int^a_{ap^{-1} \vee b} r^{(q+\lambda)}(x,y) \,  \dd y + \lambda \int^{ap^{-1} \vee b}_{b} r^{(q+\lambda)}(x,y) \,  g_{a,b}^{(q)}(py) \dd y, \notag
		\end{align}
	and thus using Theorem \ref{Thm:Res-UAuxiliaryFuncIsFixedPoint}, we get that
	 			\begin{align}
		g_{a,b}(x)	&= \bigl[ {W}^{(q+\lambda)}(a-b) \bigr]^{-1}\Bigl[{W}^{(q+\lambda)}(x-b)+ \int _b^{a\vee bp^{-1}} \overline{\V}^{(p)}_\lambda[r^{(q+\lambda)}](x,y) {W}^{(q+\lambda)}(py-b) \dd y  \Bigr]+\mathcal H_{p}^{(q+\lambda)}(x;b,a), \notag 
		\end{align}
		where 
			\begin{equation*}
			\mathcal H_{p}^{(q+\lambda)}(x;b,a)=\lambda\Bigl[\int_{ap^{-1}\vee b}^{a}r^{(q+\lambda)}(x,y)\dd y+ \int_b^{ap^{-1}\vee b}\int_{ap^{-1}\vee b}^{a}\overline{\V}^{(p)}_\lambda[r^{(q+\lambda)}](x,y) 	r^{(q+\lambda)}(py,z)\dd z \dd  y\Bigr]. 
			\label{Def:H} 
		\end{equation*} 
	In the same fashion with (i),    the result follows immediately after noticing that using Definition \ref{Def:ResolventDefn} it holds that 
	\begin{align*}
r^{(q+\lambda)}(x,y)+ \int_b^{ap^{-1}\vee b}\overline{\V}^{(p)}_\lambda[r^{(q+\lambda)}](x,y) 	r^{(q+\lambda)}(py,z)\dd  y&={\V}^{(p)}_1[r^{(q+\lambda)}](x,y) +\sum_{k=1}^\infty \lambda ^k{\V}^{(p)}_{k+1}[r^{(q+\lambda)}](x,y) \\
&=\frac{1}{\lambda}\overline{\V}^{(p)}_\lambda[r^{(q+\lambda)}](x,y), 
		\end{align*}
		and therefore $\mathcal H_{p}^{(q+\lambda)}$ reduces to the form of the theorem. 
	\end{proof}
\noindent 	We continue by studying downward two-sided exit problems.
\begin{theorem}[Downward two-sided exit problem]\label{Thm:Res-TwoSidedExitDownwards-PositiveHalfAxis}
		Let $p \in (0,1)$,  $q >0 $ and $\lambda \geq 0$. Then,  $x \in [b,a]$ and $a>b \in \R$,    the following identity holds 
	\begin{align}
	\E_x \Bigl( \ee^{-q \tau_{b,U}^-} \1_{\{\tau_{b,U}^- < \tau_{a,U}^+\}} \Bigr) =\mathcal {Z}_{b,a}^\p r^{(q+\lambda)}(x)-\frac{\mathcal{W}_{b,a}^\p r^{(q+\lambda)}(x)}{\mathcal{W}_{b,a}^\p r^{(q+\lambda)}(a)}\mathcal {Z}_{b,a}^\p r^{(q+\lambda)}(a)+\int_{b}^{a\wedge bp^{-1}}\overline{\V}^{(p)}_\lambda[r^{(q+\lambda)}](x,y) 	\dd y\1_{\{a, b\in \R_+\}},
	\label{Eq:PSR-ExitDownwards-abPos}
\end{align}
	where  
	the operator 	$r^{(q)}$   given in   \textbf{\upshape {(S1)}}.
	\end{theorem}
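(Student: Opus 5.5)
We mimic the proof of Theorem \ref{Thm:Res-TwoSidedExitUpwards-PositiveHalfAxis}. Set $k_{a,b}^{(q)}(x):=\E_x\bigl(\ee^{-q\tau_{b,U}^-}\1_{\{\tau_{b,U}^-<\tau_{a,U}^+\}}\bigr)$ and condition on the first Poisson epoch $T$. Before $T$ the process $U$ agrees with $X$, so the killing at rate $\lambda$ together with Eqs.~\eqref{eq:ClassicalKilledPotential} and \eqref{eq:ClassicalExitfromBelow} (shifted by $b$) gives the renewal equation
\begin{equation*}
k_{a,b}^{(q)}(x)=Z^{(q+\lambda)}(x-b)-\frac{W^{(q+\lambda)}(x-b)}{W^{(q+\lambda)}(a-b)}Z^{(q+\lambda)}(a-b)+\lambda\int_b^a r^{(q+\lambda)}(x,y)\,k_{a,b}^{(q)}(py)\,\dd y .
\end{equation*}
Since $0\le k_{a,b}^{(q)}\le 1$, this function is bounded. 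Continuity follows from the equation itself, the strong Feller property (Proposition \ref{Prop:BertoinProp}) and the continuity of the scale functions. Hence $k_{a,b}^{(q)}\in C_\mathrm{b}([b,a])$.

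Next we split into the same sign cases as before and identify the interval on which $k_{a,b}^{(q)}(py)$ is genuinely unknown.
\begin{itemize}
\item[(i)] If $a,b\in\R_+$, then for $y\in[b,a\wedge bp^{-1})$ we have $py<b$, so the reset takes $U$ below $b$ and $k_{a,b}^{(q)}(py)=1$. That part of the integral becomes the explicit term $\lambda\int_b^{a\wedge bp^{-1}}r^{(q+\lambda)}(x,y)\,\dd y$, and the unknown part is $\int_{a\wedge bp^{-1}}^a$.
\item[(ii)] If $b<0<a$, then $py\in[pb,pa]\subset[b,a]$, so the whole integral $\int_b^a$ remains unknown.
\item[(iii)] If $a,b\in\R_-$, then for $y\in(ap^{-1}\vee b,a]$ we have $py>a$, so $k_{a,b}^{(q)}(py)=0$. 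The unknown part is $\int_b^{ap^{-1}\vee b}$.
\end{itemize}
In each case the limits are exactly $b\vee(a\wedge bp^{-1})$ and $a\wedge(ap^{-1}\vee b)$, the limits appearing in \eqref{Eq:PSR-ScaleFunc}. The equation is then of the form $k=\mathcal{A}k$ in Theorem \ref{Thm:Res-UAuxiliaryFuncIsFixedPoint} with $\gamma=\lambda<q+\lambda$. The inhomogeneous term is $h$ equal to the $Z$--$W$ combination, plus the explicit integral in case (i).

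Applying \eqref{solop}, the linear pieces regroup as follows. The $W^{(q+\lambda)}(\cdot-b)$ terms combine into $\mathcal{W}_{b,a}^{\p}r^{(q+\lambda)}(x)$, and the $Z^{(q+\lambda)}(\cdot-b)$ terms combine into $\mathcal{Z}_{b,a}^{\p}r^{(q+\lambda)}(x)$, with the common constant $Z^{(q+\lambda)}(a-b)/W^{(q+\lambda)}(a-b)$ in front of the $W$-part. To replace this constant by the ratio $\mathcal{Z}_{b,a}^\p r^{(q+\lambda)}(a)/\mathcal{W}_{b,a}^\p r^{(q+\lambda)}(a)$, we use $r^{(q+\lambda)}(a,y)=0$. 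By the recursion this gives $\overline{\V}^{(p)}_\lambda[r^{(q+\lambda)}](a,y)=0$, so $\mathcal{W}_{b,a}^{\p}r^{(q+\lambda)}(a)=W^{(q+\lambda)}(a-b)$ and $\mathcal{Z}_{b,a}^{\p}r^{(q+\lambda)}(a)=Z^{(q+\lambda)}(a-b)$.

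The main obstacle is case (i). There the extra inhomogeneity $\lambda\int_b^{a\wedge bp^{-1}}r^{(q+\lambda)}(x,z)\,\dd z$ also passes through the resolvent, producing a double integral of the type $\mathcal H$ from the previous proof. Collapsing it requires the identity
\begin{equation*}
r^{(q+\lambda)}(x,z)+\int\overline{\V}^{(p)}_\lambda[r^{(q+\lambda)}](x,y)\,r^{(q+\lambda)}(py,z)\,\dd y=\lambda^{-1}\overline{\V}^{(p)}_\lambda[r^{(q+\lambda)}](x,z),
\end{equation*}
that is, the resolvent equation $\overline{\V}=\lambda\V_1+\lambda\,\overline{\V}\circ\V_1$. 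This should follow from Definition \ref{Def:ResolventDefn} by re-indexing the series, with absolute convergence justified by Remark \ref{rem2}. One must check that the inner and outer integration ranges match the kernel's $[u_1,u_2]$. If they do not match directly, I would first verify the identity term by term, $\V_{k+1}=\int\V_k(\cdot,y)\,r(py,\cdot)\,\dd y$, which requires proving associativity of the recursion by induction on $k$. Once that is done, the extra term reduces to $\int_b^{a\wedge bp^{-1}}\overline{\V}^{(p)}_\lambda[r^{(q+\lambda)}](x,y)\,\dd y\,\1_{\{a,b\in\R_+\}}$, which is the claimed formula.
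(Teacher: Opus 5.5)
Your proposal is correct and follows the paper's proof essentially step for step. It uses the same first-epoch renewal equation and the same three sign cases with the reduced ranges $[b\vee(a\wedge bp^{-1}),\,a\wedge(ap^{-1}\vee b)]$, and it applies Theorem~\ref{Thm:Res-UAuxiliaryFuncIsFixedPoint} with $\gamma=\lambda<q+\lambda$. It also uses $\overline{\V}^{(p)}_\lambda[r^{(q+\lambda)}](a,y)=0$ to turn $Z^{(q+\lambda)}(a-b)/W^{(q+\lambda)}(a-b)$ into the ratio of operators, and the resolvent identity to collapse the extra term (the paper's $\mathcal{K}_p^{(q+\lambda)}$) in case (i).

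The associativity you flag is fine. By induction and Tonelli (all kernels are nonnegative), both the left and right recursions equal the same iterated integral over $[u_1,u_2]^{k}$ with the last variable free. Hence the identity holds for $z\in[b,a\wedge bp^{-1}]$ even though $z$ lies outside $[u_1,u_2]$.
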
	
	\begin{proof}
			Let $a,b \in \R$ and  $h_{a,b}^{(q)}(x) := 	\E_x \bigl( \ee^{-q \tau_{b,U}^-} \1_{\{\tau_{b,U}^- < \tau_{a,U}^+\}} \bigr) $. Similar  line of logic as in  Eq. \eqref{Eq:PSR-ExitUpwardsMainRenewal} of Theorem   \ref{Thm:Res-TwoSidedExitUpwards-PositiveHalfAxis} [using  Eq. \eqref{eq:ClassicalExitfromBelow} for  the downward case], we have 
		\begin{align}
			h_{a,b}^{(q)}(x) 
			&=  Z^{(q+\lambda)}(x-b) - \frac{W^{(q+\lambda)}(x-b)}{W^{(q+\lambda)}(a-b)}  Z^{(q+\lambda)}(a-b) + \lambda \int^a_{b} r^{(q+\lambda)}(x,y) \, h_{a,b}^{(q)}(py) \dd y.  \label{Eq:PSR-ExitDownwardsMainRenewal}
		\end{align}
		\noindent 
	(i) Consider $a,b \in \R_+$. Noticing,  for $a > bp^{-1}$ that  $h_{a,b}^{(q)}(py) = 1$ for $y<bp^{-1}$ and for $bp^{-1} \geq a >b$  that $h_{a,b}^{(q)}(py) = 1$ for all $y \in [b,bp^{-1})$, Eq. \eqref{Eq:PSR-ExitDownwardsMainRenewal} becomes 
			\begin{align}
			h_{a,b}^{(q)}(py) =&	 Z^{(q+\lambda)}(x-b) - \frac{W^{(q+\lambda)}(x-b)}{W^{(q+\lambda)}(a-b)}  Z^{(q+\lambda)}(a-b) +  \lambda \int^{a\wedge bp^{-1}}_b r^{(q+\lambda)}(x,y) \, \dd y \notag \\
& + \lambda \int^a_{a\wedge bp^{-1}} r^{(q+\lambda)}(x,y) \, h_{a,b}^{(q)}(py) \dd y. 		\notag 
				 		\end{align}	
			 		Hence, from Theorem \ref{Thm:Res-UAuxiliaryFuncIsFixedPoint}, we have that 
		 				\begin{align}
		 				h_{a,b}^{(q)}(py) 
		 				=&Z^{(q+\lambda)}(x-b)+ \int^a_{a\wedge bp^{-1}}	\overline{\V}^{(p)}_\gamma[r^{(q)}](x,y)Z^{(q+\lambda)}(py-b)\dd y\notag \\
		 				&-\bigl[W^{(q+\lambda)}(a-b)\bigr]^{-1}\Bigl[W^{(q+\lambda)}(x-b)+\int^a_{a\wedge bp^{-1}}	\overline{\V}^{(p)}_\gamma[r^{(q)}](x,y)W^{(q+\lambda)}(py-b)\dd y\Bigr]Z^{(q+\lambda)}(a-b)
\notag \\		 				
		 				& +\mathcal{K}_p^{(q+\lambda)}(x;b,a),\notag 	\end{align}	
		 				with
		 					\begin{equation*}
		 					\mathcal{K}_p^{(q+\lambda)}(x;b,a)=\lambda\Bigl[\int_{b}^{a\wedge bp^{-1}}r^{(q+\lambda)}(x,y)\dd y+ \int_{a \wedge bp^{-1}}^{a}\int_{b}^{a \wedge bp^{-1}}\overline{\V}^{(p)}_\lambda[r^{(q+\lambda)}](x,y) 	r^{(q)}(py,z)\dd z \dd  y\Bigr]. 
		 				\end{equation*}
	 			Treating $\mathcal{K}_p^{(q+\lambda)}$	in a similar manner with the proof of Theorem \ref{Thm:Res-TwoSidedExitUpwards-PositiveHalfAxis}(iii) and using the definition of the (partial resetting) scale functions of Eq. \eqref{Eq:PSR-ScaleFunc} for $a$, $b\in\mathbb \R_+$,  the results follows. 	
	 				\vspace{0.2cm}\\
	 				\noindent 	(ii) Let $a\in \mathbb R_+$, $b\in \mathbb R_-$. This case can  be shown using  similar arguments to (i) with the only difference that the integral limits are from $a$ to $b$.
	 				\vspace{0.2cm}\\
	 				\noindent 	(iii) Consider,  $a$, $b\in \mathbb R_+$. Noticing  for  $ap^{-1} > b$ that  $h_{a,b}^{(q)}(py) = 0$ for $y \in [ap^{-1},a]$, and for   $ap^{-1} \leq b < a$  that $h_{a,b}^{(q)}(py) = 0$ for all $y \in [b,a)$, Eq. \eqref{Eq:PSR-ExitDownwardsMainRenewal} translates to 
	 					\begin{align}
	 					h^{(q)}_{a,b}(x) &= Z^{(q+\lambda)}(x-b) - \frac{W^{(q+\lambda)}(x-b)}{W^{(q+\lambda)}(a-b)}Z^{(q+\lambda)}(a-b) + \lambda \int^{ap^{-1} \vee b}_{b} r^{(q+\lambda)}(x,y) \, h_{a,b}^{(q)}(py) \dd y.  \notag 
	 				\end{align}
 				Then using the same technique as in (i), the result follows. 
	\end{proof}
 We proceed with the  the one-sided exit problems. We note that 	
 the classical means of determining the one-sided limits of SNLPs involves using the two-sided exit identities, the spatial invariance property of L\'evy processes and the well-known limit results from Eq.~\eqref{eq:LimitofRatioofScaleFuncs}. However, in our case, the PSR-LP lacks spatial invariance since the resetting occurs w.r.t.~the position of the $x$-axis which is fixed. Therefore, we shall use  Theorem \ref{Thm:Res-UAuxiliaryFuncIsFixedPoint} to ensure appropriate starting kernels can be used that correspond to the necessary one-sided exit problems. 	For these reasons, the proofs of the one-sided exit problems will be written briefly to avoid the necessary repetitions. 
 \begin{proposition}[Downward one-sided exit problem]\label{Thm:Res-OneSidedExitDownwards-PositiveHalfAxis}
 	Let $b\in \mathbb R$, $p \in (0,1)$, $q >0 $ and $\lambda \geq 0$. Then, it holds that 
 		\begin{equation*}
 		\E_x \Bigl( \ee^{-q \tau_{b,U}^-} \1_{\{\tau_{b,U}^- < \infty\}} \Bigr) 
 		=\mathcal{Z}_{b,\infty}^{p}\underline r^{(q+\lambda)}(x) -\frac{q+\lambda}{\Phi_{q+\lambda}}  \mathcal{W}_{b,\infty}^{p}\underline r^{(q+\lambda)}(x)+\int^{b}_{ bp^{-1}} \overline{\V}^{(p)}_\lambda[\underline{r}^{(q+\lambda)}](x,y)  \dd y\mathbf 1_{\{b\in\mathbb R_+\}}, \label{Eq:PSR-VScaleFunc}  
 	\end{equation*}
 	with $\underline {r}^{(q+\lambda)}$  given in \textbf{\upshape{(S1)}}. 
 \end{proposition}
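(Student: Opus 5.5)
We mirror the proof of Theorem \ref{Thm:Res-TwoSidedExitDownwards-PositiveHalfAxis}, replacing the two-sided killed potential density by the one-sided one. Set $k_b^{(q)}(x):=\E_x\bigl(\ee^{-q\tau_{b,U}^-}\1_{\{\tau_{b,U}^-<\infty\}}\bigr)$ for $x\ge b$. We condition on the first Poisson epoch $T$ and use $\{U_t:t<T\}\overset{\dd}{=}\{X_t:t<T\}$, the strong Markov property of Theorem \ref{sdepart}, and the classical one-sided identities for $X$ killed at rate $q+\lambda$. The first of these is
\[
\E_x\bigl(\ee^{-(q+\lambda)\tau_{b,X}^-}\1_{\{\tau_{b,X}^-<\infty\}}\bigr)=Z^{(q+\lambda)}(x-b)-\tfrac{q+\lambda}{\Phi_{q+\lambda}}W^{(q+\lambda)}(x-b).
\]
The second is the $(q+\lambda)$-potential density of $X$ killed on exiting $[b,\infty)$, namely $\underline r^{(q+\lambda)}(x,y)$ from \textbf{(S1)}. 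Both are obtained from Eqs.~\eqref{eq:ClassicalKilledPotential}--\eqref{eq:ClassicalExitfromBelow} by shifting by $b$ and letting $a\to\infty$ with Eq.~\eqref{eq:LimitofRatioofScaleFuncs}. This yields the renewal equation
\[
k_b^{(q)}(x)=Z^{(q+\lambda)}(x-b)-\tfrac{q+\lambda}{\Phi_{q+\lambda}}W^{(q+\lambda)}(x-b)+\lambda\int_b^\infty \underline r^{(q+\lambda)}(x,y)\,k_b^{(q)}(py)\,\dd y .
\]

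Next we split according to the sign of $b$. If $b\in\R_+$, then for $y\in[b,bp^{-1})$ we have $py<b$, so $k_b^{(q)}(py)=1$. The equation then becomes a known forcing term
\[
Z^{(q+\lambda)}(x-b)-\tfrac{q+\lambda}{\Phi_{q+\lambda}}W^{(q+\lambda)}(x-b)+\lambda\int_b^{bp^{-1}}\underline r^{(q+\lambda)}(x,y)\,\dd y
\]
plus $\lambda\int_{bp^{-1}}^\infty\underline r^{(q+\lambda)}(x,y)k_b^{(q)}(py)\dd y$. If $b\in\R_-$, then $py\ge pb\ge b$, so no such reduction occurs and the integral runs over $[b,\infty)$. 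These integration ranges are exactly $b\vee(a\wedge bp^{-1})$ to $a\wedge(ap^{-1}\vee b)$ with $a=\infty$ in Eq.~\eqref{Eq:PSR-ScaleFunc}.

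We then apply Theorem \ref{Thm:Res-UAuxiliaryFuncIsFixedPoint} with $\B=[b,\infty)$, $\gamma=\lambda$, kernel $\underline r^{(q+\lambda)}$ and $[u_1,u_2]$ the reduced range. The bound \eqref{Eq:ResolventInequality} holds with constant $1/(q+\lambda)>\lambda/\ldots$, which is the contraction condition $\lambda<q+\lambda$. The function $k_b^{(q)}$ is bounded by $1$ and continuous by the strong Feller argument, and $[pu_1,pu_2]\subset\B$ on the reduced range. The fixed point formula \eqref{solop} then splits linearly over the three pieces of $h$:
\begin{itemize}
\item the $Z$-piece gives $\mathcal Z^p_{b,\infty}\underline r^{(q+\lambda)}(x)$;
\item the $W$-piece gives $-\frac{q+\lambda}{\Phi_{q+\lambda}}\mathcal W^p_{b,\infty}\underline r^{(q+\lambda)}(x)$;
\item the constant piece $\lambda\int_b^{bp^{-1}}\underline r^{(q+\lambda)}(x,\cdot)$ must be collapsed.
\end{itemize}
For the third piece we use the same telescoping as in Theorem \ref{Thm:Res-TwoSidedExitUpwards-PositiveHalfAxis}(iii). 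By Definition \ref{Def:ResolventDefn},
\[
\underline r^{(q+\lambda)}(x,z)+\int \overline{\V}^{(p)}_\lambda[\underline r^{(q+\lambda)}](x,y)\,\underline r^{(q+\lambda)}(py,z)\,\dd y=\lambda^{-1}\overline{\V}^{(p)}_\lambda[\underline r^{(q+\lambda)}](x,z),
\]
which turns the constant piece into $\int_b^{bp^{-1}}\overline{\V}^{(p)}_\lambda[\underline r^{(q+\lambda)}](x,y)\,\dd y$. This is the indicator term in the statement, with the orientation of the limits as written there.

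The main obstacle is the unbounded domain. Two things need care. First, the series $\overline{\V}^{(p)}_\lambda$ must be integrated against $W^{(q+\lambda)}(py-b)$ and $Z^{(q+\lambda)}(py-b)$, which are unbounded, so $h\notin C_b$ and Theorem \ref{Thm:Res-UAuxiliaryFuncIsFixedPoint} does not apply verbatim. Only the full combination $Z-\frac{q+\lambda}{\Phi}W$ is bounded, being a Laplace transform. I would therefore apply the theorem to the bounded combined forcing term, and only afterwards split it formally into the $\mathcal Z$ and $\mathcal W$ operators. Justifying that split needs the exponential decay of $\underline r^{(q+\lambda)}(x,y)$ in $y$: the factor $\ee^{-\Phi_{q+\lambda}(y-b)}W^{(q+\lambda)}(x-b)$ combined with $W^{(q+\lambda)}(py-b)\lesssim \ee^{\Phi_{q+\lambda}p(y-b)}$ with $p<1$ gives integrability. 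The second point is the Fubini interchange in the telescoping step, which I would justify by the absolute convergence noted in Remark \ref{rem2}.
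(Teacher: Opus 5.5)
Your proposal is correct and is essentially the paper's proof. It uses the same first-epoch renewal equation with kernel $\underline r^{(q+\lambda)}$ and the same reduction to $[b\vee bp^{-1},\infty)$. It applies Theorem~\ref{Thm:Res-UAuxiliaryFuncIsFixedPoint} to the bounded combined forcing term; the paper likewise only checks that this combination lies in $C_\mathrm{b}([b,\infty))$ and then splits off the $\mathcal Z$ and $\mathcal W$ pieces formally, so your integrability remark is extra care, though as written it covers only the first term of the resolvent series. It then collapses the constant piece with the telescoping identity of Theorem~\ref{Thm:Res-TwoSidedExitUpwards-PositiveHalfAxis}(iii).

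One correction: the term you derive, $\int_b^{bp^{-1}}\overline{\V}^{(p)}_\lambda[\underline r^{(q+\lambda)}](x,y)\,\dd y$, is exactly what the paper's proof ends with. The limits $\int_{bp^{-1}}^{b}$ in the statement are therefore reversed by a typo, and you should say so explicitly rather than claim agreement with the limits ``as written'', which would flip the sign.
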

 \begin{proof}Let $b \in \R$ and  $h_{b}^{(q)}(x) := 	\E_x ( \ee^{-q \tau_{b,U}^-} \1_{\{\tau_{b,U}^- < \infty\}} ) $, where $x \in [b,\infty)$. Then, by conditioning on the next Poisson epoch and using the strong Markov property, Eqs \eqref{eq:LimitofRatioofScaleFuncs}  and \eqref{eq:ClassicalExitfromBelow}, we have that 
 	\begin{align}
 		h_b^{(q)}(x)
 		&= \E_{x} \Bigl( \ee^{-(q+\lambda) \tau_{b,X}^-} \1_{\{\tau_{b,X}^- < \infty\}} \Bigr) + \lambda \int^\infty_{b} \E_x \Bigl( \int^\infty_0 \ee^{-(q+\lambda)t} \1_{\{X_t \in \dd y, \, t < \tau_{b,X}^- \}} \dd t \Bigr) h_b^{(q)}(py) \notag \\
 		&= Z^{(q+\lambda)}(x-b) - \frac{q+\lambda}{\Phi_{q+\lambda}} W^{(q+\lambda)}(x-b)+ \lambda \int_b^{\infty}  \underline{r}^{(q+\lambda)}(x,y)   h_b^{(q)}(py) \dd y.\notag 
 	\end{align}
 Noticing that for $b\in\mathbb R_+$, $h_b^{(q)}(py) = 1$ for $y \in [b,bp^{-1}]$, the above Eq.  can be  written universally,  for $b\in \mathbb R$, as 
 	\begin{align}
 	h_b^{(q)}(x)
 	&= \mathcal{V}^{(q+\lambda)}_{p}(x,b)+ \lambda \int_{b\vee bp^{-1}}^{\infty}  \underline{r}^{(q+\lambda)}(x,y)  h_b^{(q)}(py) \dd y, \notag 
 \end{align}
 with 
 \begin{equation*}
 	\mathcal{V}^{(q+\lambda)}_{p}(x,b) =  Z^{(q+\lambda)}(x-b) - \frac{q+\lambda}{\Phi_{q+\lambda}} W^{(q+\lambda)}(x-b) + \lambda \int^{b\vee bp^{-1}}_{b} \underline{r}^{(q+\lambda)}(x,y) \dd y. \label{Eq:TR-VScaleFunc} 
 \end{equation*}
Furthermore, observe that $\mathcal{V}_{p}^{(q+\lambda)}(x,b) \in  C_\mathrm{b}([b,\infty))$ which follows by Eq.~\eqref{Eq:ResolventInequality}.   Using Theorem \ref{Thm:Res-UAuxiliaryFuncIsFixedPoint}, we have 
\begin{align*}
		h_b^{(q)}(x)
&=Z^{(q+\lambda)}(x-b)+   \int^{\infty}_{b\vee bp^{-1}} \overline{\V}^{(p)}_\lambda[\underline{r}^{(q+\lambda)}](x,y)Z^{(q+\lambda)}(py-b) \dd y   \\
&\quad - \frac{q+\lambda}{\Phi_{q+\lambda}} W^{(q+\lambda)}(x-b)- \frac{q+\lambda}{\Phi_{q+\lambda}} \int^{\infty}_{b\vee bp^{-1}} \overline{\V}^{(p)}_\lambda[\underline{r}^{(q+\lambda)}](x,y)  W^{(q+\lambda)}(py-b)\dd y \\
&\quad +\int^{ bp^{-1}}_{b}  \overline{\V}^{(p)}_\lambda[\underline{r}^{(q+\lambda)}](x,y)\dd y\mathbf 1 _{\{b\in \mathbb R_+\}},	\end{align*}
where the last term follows in the same manner as in the proof of Theorem \ref{Thm:Res-TwoSidedExitUpwards-PositiveHalfAxis}(iii). 
 \end{proof}
 
 	\begin{proposition}[Upward one-sided exit problem]\label{Thm:Res-OneSidedExitUpwards-PositiveHalfAxis}
 	Let $a\in \mathbb  R$, $p \in (0,1)$, $q >0 $ and $\lambda \geq 0$. Then, it holds that 
 		\begin{equation*}
 		\E_x \Bigl( \ee^{-q \tau_{a,U}^+} \1_{\{\tau_{a,U}^+ < \infty\}} \Bigr) 
 		= \ee^{-\Phi_{q+\lambda}(a-x)} + \int_{-\infty}^{a\wedge ap^{-1}} \overline{\V}^{(p)}_\lambda[\overline{r}^{(q+\lambda)}](x,y) \ee^{-\Phi_{q+\lambda}(a-py)} \dd y+\int^{a}_{ ap^{-1}} \overline{\V}^{(p)}_\lambda[\overline{r}^{(q+\lambda)}](x,y)  \dd y\mathbf 1_{\{a\in\mathbb R_-\}},  \label{Eq:PSR-ExitUpwardsOneSided-abPos} 
 	\end{equation*}
 	where 
 $\overline {r}^{(q+\lambda)}$  given in \textbf{\upshape{(S1)}}. 
 \end{proposition}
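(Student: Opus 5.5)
We mirror the proof of Proposition \ref{Thm:Res-OneSidedExitDownwards-PositiveHalfAxis}, now with the upward one-sided kernel $\overline r^{(q+\lambda)}$. Set $g_a^{(q)}(x):=\E_x\bigl(\ee^{-q\tau_{a,U}^+}\1_{\{\tau_{a,U}^+<\infty\}}\bigr)$ for $x\in(-\infty,a]$.

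\textbf{Step 1: renewal equation.} We condition on the first Poisson epoch $T$. Before $T$ the process $U$ coincides with $X$, and $T$ is independent of $X$, so the strong Markov property (Theorem \ref{sdepart}) gives
\begin{align*}
g_a^{(q)}(x)=\E_x\bigl(\ee^{-(q+\lambda)\tau_{a,X}^+}\1_{\{\tau_{a,X}^+<\infty\}}\bigr)+\lambda\int_{-\infty}^a \overline r^{(q+\lambda)}(x,y)\,g_a^{(q)}(py)\,\dd y .
\end{align*}
The first term is $\ee^{-\Phi_{q+\lambda}(a-x)}$, obtained from Eq.~\eqref{eq:ClassicalExitfromBelow} by letting $b\to-\infty$ with Eq.~\eqref{eq:LimitofRatioofScaleFuncs}. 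The density of the $(q+\lambda)$-potential of $X$ killed at $\tau_{a,X}^+$ is obtained from Eq.~\eqref{eq:ClassicalKilledPotential} by the same limit. Shifting space and using $W^{(q)}(x-b)/W^{(q)}(a-b)\to \ee^{-\Phi_q(a-x)}$, this density is exactly $\overline r^{(q+\lambda)}(x,y)$ of \textbf{(S1)}.

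\textbf{Step 2: split by the sign of $a$.}

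If $a\ge0$, then for every $y\le a$ we have $py\le a$, so no boundary value is forced. The equation is already in the form \eqref{Eq:Res-MainIntegralOperator} on $\B=(-\infty,a]$ with $h(x)=\ee^{-\Phi_{q+\lambda}(a-x)}$, $\gamma=\lambda$, and integration range $(-\infty,a]=(-\infty,a\wedge ap^{-1}]$.

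If $a<0$, then $ap^{-1}<a$, and for $y\in(ap^{-1},a]$ we have $py>a$, so $g_a^{(q)}(py)=1$. The equation becomes
\begin{align*}
g_a^{(q)}(x)=\ee^{-\Phi_{q+\lambda}(a-x)}+\lambda\int_{ap^{-1}}^a\overline r^{(q+\lambda)}(x,y)\,\dd y+\lambda\int_{-\infty}^{ap^{-1}}\overline r^{(q+\lambda)}(x,y)\,g_a^{(q)}(py)\,\dd y .
\end{align*}
Here $h$ carries the extra integral term, which is bounded by $\lambda/q'$ by Eq.~\eqref{Eq:ResolventInequality}. In both cases $g_a^{(q)}\le1$, and its continuity follows from Proposition \ref{Prop:BertoinProp}, so $g_a^{(q)}\in C_\mathrm b(\B)$.

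\textbf{Step 3: solve by Theorem \ref{Thm:Res-UAuxiliaryFuncIsFixedPoint}.} We apply the theorem with kernel $\overline r^{(q+\lambda)}$ and interval $(-\infty,a\wedge ap^{-1}]$. The contraction condition is $\lambda<q+\lambda$, which holds since the potential has parameter $q+\lambda$. The theorem gives $g=h+\int\overline{\V}^{(p)}_\lambda[\overline r^{(q+\lambda)}](x,y)h(py)\,\dd y$.

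Substituting $h(py)=\ee^{-\Phi_{q+\lambda}(a-py)}$ produces the first integral of the statement. For $a<0$ there are two further contributions. One is $\lambda\int_{ap^{-1}}^a\overline r^{(q+\lambda)}(x,z)\,\dd z$. The other is the resolvent applied to that same term. We combine them exactly as in Theorem \ref{Thm:Res-TwoSidedExitUpwards-PositiveHalfAxis}(iii), using the identity
$$\overline r(x,z)+\int\overline{\V}^{(p)}_\lambda(x,y)\,\overline r(py,z)\,\dd y=\lambda^{-1}\overline{\V}^{(p)}_\lambda(x,z).$$
This identity is the recursion \eqref{Eq:ResolventforScaleFuncs} read one index up, and it yields the last term $\int_{ap^{-1}}^a\overline{\V}^{(p)}_\lambda\,\dd y$, written in the paper's orientation.

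\textbf{Main obstacle.} The delicate point is the unbounded domain $\B=(-\infty,a]$. Theorem \ref{Thm:Res-UAuxiliaryFuncIsFixedPoint} was phrased with a finite $[u_1,u_2]$. I would check that the contraction bound in Eq.~\eqref{Eq:ResolventInequality} holds verbatim with $u_1=-\infty$, which it does since the total potential mass is at most $1/(q+\lambda)$. I would also need $[pu_1,pu_2]\subset\B$, which holds because $p(-\infty,a\wedge ap^{-1}]\subset(-\infty,a]$. The exchange of sum and integral is justified by the uniform convergence shown in Remark \ref{rem2}.
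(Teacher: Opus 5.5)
Your proposal is correct and follows the paper's own route. You write the renewal equation with the killed kernel $\overline r^{(q+\lambda)}$, force $g_a^{(q)}(py)=1$ on $(ap^{-1},a]$ when $a<0$, apply Theorem~\ref{Thm:Res-UAuxiliaryFuncIsFixedPoint} on $(-\infty,a\wedge ap^{-1}]$, and absorb the extra term with the resolvent identity from the proof of Theorem~\ref{Thm:Res-TwoSidedExitUpwards-PositiveHalfAxis}(iii).

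Two small precisions. First, that identity needs the right-composition form $\V^{(p)}_{k+1}[\overline r^{(q+\lambda)}](x,z)=\int \V^{(p)}_k[\overline r^{(q+\lambda)}](x,y)\,\overline r^{(q+\lambda)}(py,z)\,\dd y$. This follows from the left recursion \eqref{Eq:ResolventforScaleFuncs} by induction and Tonelli, not by reading it one index up. Second, the bound on the extra term in $h$ is $\lambda/(q+\lambda)$.
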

 \begin{proof}
	Let $a \in \R$  and denote  $g_{a}^{(q)}(x) := 	\E_x \bigl( \ee^{-q \tau_{a,U}^+} \1_{\{\tau_{a,U}^+ < \infty\}} \bigr) $.  Then, using similar arguments as  above, we have that 
	  	\begin{align}
	  		g_a^{(q)}(x)	&= \E_{x} \bigl( \ee^{-(q+\lambda) \tau_{a,X}^+} \1_{\{\tau_{a,X}^+ < \infty\}} \bigr) + \lambda \int^a_{-\infty} \E_x \Bigl( \int^\infty_0 \ee^{-(q+\lambda)t} \1_{\{X_t \in \dd y, \, t < \tau_{a,X}^+ \}} \dd t \Bigr) g_a^{(q)}(py)\dd y  \notag \\
	  		&= \ee^{-\Phi_{q+\lambda}(a-x)}+\lambda \int_ {a\wedge ap^{-1}}^a\overline r ^{(q+\lambda)}(x,y)\dd y+ \lambda \int_{-\infty} ^{a\wedge ap^{-1}} \overline r ^{(q+\lambda)}(x,y) \, g_a^{(q)}(py) \dd y. \notag  
	  	\end{align}
  	Now, observe that the function of the first two terms belong to $ C_\mathrm{b}((-\infty,a])$, and applying  Theorem  \ref{Thm:Res-UAuxiliaryFuncIsFixedPoint} and using the same  arguments as in  Proposition \ref{Thm:Res-OneSidedExitDownwards-PositiveHalfAxis}, gives the required result. 
 \end{proof}
	\subsection{Exit identities under reflection}\label{reflection}
In this subsection we extend the fluctuation identities derived previously to partially resetting Lévy processes subject to Skorokhod reflection;  both lower and upper.  The proofs follow the same integral-equation approach as in the previous subsection, with the potential densities of the reflected Lévy processes serving as the starting kernels in the resolvent construction. 
	
	\noindent 	Let  $X^b\equiv\{X^b_t\}_{t \geq 0}$, with  $  X^b_t=X_t+\sup _{0\leq s\leq t}(b-X_s)\vee 0$, be a SNLP reflected at  lower level $b$. The supremum term pushes the process upward whenever it attempts to down-cross the level $b$. It is known, see Theorem 1 in \cite{P2004}, that  the exit upwards and the  potential density killed on exiting $a$ are  given  by 
		\[
\mathbb 	E_x(\ee ^{-q\tau^+_{a,X ^b}})=\frac{Z^{(q+\lambda)}(x-b)}{Z^{(q+\lambda)}(a-b)}	, \quad r_b^{(q)}(x,y)=\frac{Z^{(q)}(x-b)}{Z^{(q)}(a-b)}W^{(q)}(a-y)-W^{(q)}(x-y),\quad x,y\in[b,a).
		\]
		 Furthermore, we define the reflected at a lower level $b$ partial resetting process, $U^b\equiv\{U^b_t\}_{t \geq 0}$,  given by 
		\[
			\dd  U^b_t=\dd X_t-(1-p)U^b_{t^{-}}\dd N_t+\dd R^b_t
		\]
		where the downward Skorokhod regulator is  $R^b_t=\sup_{0\leq s\leq t }\{b-X_s+(1-p)\int_{0}^{s}U^b_{u^-}\dd N_u\}\vee 0 $ is a right continuous, nondecreasing process that pushes  upwards $U^b$ whenever it attempts to cross $b$. 
	\begin{remark}\label{remref}\upshape{
		Let $0=T_0<T_1<T_2<\cdots$ denote the Poisson arrival times. Between two consecutive Poisson epochs, no resetting occurs and the process evolves as the Lévy process reflected at the lower barrier $b$. Consequently, for
		$t\in[T_{k-1},T_k)$,
		\[
		U_t^b
		=
		U_{T_{k-1}}^b
		+
		X_t-X_{T_{k-1}}
		+
		\sup_{T_{k-1}\le s\le t}
	\{	
		b-U_{T_{k-1}}^b-(X_s-X_{T_{k-1}})
		\}\vee 0.
		\]
		Thus, on each interval $[T_{k-1},T_k)$ the path coincides with the Skorokhod reflection of the Lévy process started from $U_{T_{k-1}}^b$.
		
	\noindent 	At the Poisson epoch $T_k$, the process is partially reset according to
	$
		U_{T_k}^b
		=
		b\vee(pU_{T_k-}^b),
	$
		that is  whenever the reset would place it below $b$, it is instantaneously pushed back to the barrier.
			Hence, the process is obtained recursively by alternating reflected Lévy evolution between Poisson epochs with the reset rule above. Since the Skorokhod reflection map is pathwise unique and the reset mapping $x\mapsto b\vee(px)$ is deterministic and Lipschitz, the SDE admits a unique strong solution by the same recursive argument as in Theorem    \ref{sdepart}. }

			\end{remark}
				\begin{proposition}[First passage time for the reflected process at  lower level]\label{upreflection}
				Let $a$, $b\in \mathbb  R$, $p \in (0,1)$, $q >0 $ and $\lambda \geq 0$. Then, for  $x\in[b,a)$, it holds that 
				\begin{align*}
					\mathbb 	E_x(\ee ^{-q\tau^+_{a,U^b }})&=\frac{{\mathcal{Z}_{b,a}^\p}r_b^{(q+\lambda )}(x)}{	{\mathcal{Z}_{b,a}^\p}r_b^{(q+\lambda )}(a)}+\int_{ap^{-1}\vee b}^{a}\overline{\V}^{(p)}_\lambda[r_b^{(q+\lambda)}](x,y) \dd y\mathbf 1 _{\{a,b\in \mathbb R_-\}}\\
					&\quad + \ell_a^{(p)}(b)\int_b^{a\wedge bp^{-1}}\overline{\V}^{(p)}_\lambda[r_b^{(q+\lambda)}](x,y)\dd y\mathbf 1 _{\{a,b\in \mathbb R_+\}}, 
				\end{align*}
				where  
				\begin{equation*}
					\ell_a^{(q)}(b)
					=
				\Bigl[Z^{(q+\lambda)}(a-b)
				\Bigl(
				1-
				\displaystyle\int_b^{a\wedge bp^{-1}}
				\overline{\V}^{(p)}_\lambda
				[r_b^{(q+\lambda)}](b,z)\,\mathrm{d}z
				\Bigr)\Bigr]^{-1}	\Bigl[
						1+
						\displaystyle\int_{a\wedge bp^{-1}}^a
						\overline{\V}^{(p)}_\lambda
						[r_b^{(q+\lambda)}](b,y)
						Z^{(q+\lambda)}(py-b)\,\mathrm{d}y
\Bigr].				\end{equation*}
			\end{proposition}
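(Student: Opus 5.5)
The plan follows the proof of Theorem~\ref{Thm:Res-TwoSidedExitUpwards-PositiveHalfAxis}: derive a renewal-type integral equation by conditioning on the first Poisson epoch $T$, then solve it with Theorem~\ref{Thm:Res-UAuxiliaryFuncIsFixedPoint}. Set $g^b_a(x):=\E_x(\ee^{-q\tau^+_{a,U^b}})$ for $x\in[b,a]$. By Remark~\ref{remref}, before $T$ the process $U^b$ coincides with $X^b$, and $U^b_T=b\vee(pX^b_{T-})$. The strong Markov property (Theorem~\ref{sdepart}, extended as in Remark~\ref{remref}) and the reflected identities quoted from \cite{P2004} (with $q$ replaced by $q+\lambda$) then give
\[
g^b_a(x)=\frac{Z^{(q+\lambda)}(x-b)}{Z^{(q+\lambda)}(a-b)}+\lambda\int_b^a r_b^{(q+\lambda)}(x,y)\,g^b_a\bigl(b\vee py\bigr)\,\dd y .
\]
Note that $r_b^{(q+\lambda)}(a,y)=0$ for $y<a$, so $\overline{\V}^{(p)}_\lambda[r_b^{(q+\lambda)}](a,y)=0$ and $\mathcal Z^{p}_{b,a}r_b^{(q+\lambda)}(a)=Z^{(q+\lambda)}(a-b)$. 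This identifies the denominator in the statement. I also expect $g^b_a\in C_{\mathrm b}([b,a])$ by the strong Feller argument in the proof of Theorem~\ref{Thm:Res-TwoSidedExitUpwards-PositiveHalfAxis}, since $r_b$ is a killed potential density bounded as in \eqref{Eq:ResolventInequality}.

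Next I split into the same three sign cases. If $a\in\R_+$ and $b\in\R_-$, then $py\ge b$ for all $y\in[b,a]$, so $b\vee py=py$. Theorem~\ref{Thm:Res-UAuxiliaryFuncIsFixedPoint} with $h=Z^{(q+\lambda)}(\cdot-b)/Z^{(q+\lambda)}(a-b)$ then gives the fractional term directly. If $a,b\in\R_-$, then for $y\in[ap^{-1}\vee b,a]$ we have $py\ge a$, so $g^b_a(py)=1$ (the reset jumps over $a$). Moving that piece into the inhomogeneity and collapsing the double sum exactly as in Theorem~\ref{Thm:Res-TwoSidedExitUpwards-PositiveHalfAxis}(iii) produces the extra term $\int_{ap^{-1}\vee b}^a\overline{\V}^{(p)}_\lambda[r_b^{(q+\lambda)}](x,y)\,\dd y$. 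In both cases the reflection never binds at reset times.

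The genuinely new case is $a,b\in\R_+$. For $y\in[b,a\wedge bp^{-1})$ we have $py<b$, so the reset lands on the barrier and $g^b_a(b\vee py)=g^b_a(b)=:c$, an unknown constant. Here c is not zero as in the killed case; the splitting pattern mirrors Theorem~\ref{Thm:Res-TwoSidedExitUpwards-PositiveHalfAxis}(i), only the constant on that piece changes. The equation then reads
\[
g^b_a(x)=\frac{Z^{(q+\lambda)}(x-b)}{Z^{(q+\lambda)}(a-b)}+c\lambda\int_b^{a\wedge bp^{-1}}r_b^{(q+\lambda)}(x,y)\,\dd y+\lambda\int_{a\wedge bp^{-1}}^a r_b^{(q+\lambda)}(x,y)\,g^b_a(py)\,\dd y .
\]
I treat $c$ as a parameter and apply Theorem~\ref{Thm:Res-UAuxiliaryFuncIsFixedPoint}. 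The $Z$-part produces $\mathcal Z^p_{b,a}r_b^{(q+\lambda)}(x)/Z^{(q+\lambda)}(a-b)$, and the constant part collapses, via the resolvent identity of Theorem~\ref{Thm:Res-TwoSidedExitUpwards-PositiveHalfAxis}(iii), to $c\int_b^{a\wedge bp^{-1}}\overline{\V}^{(p)}_\lambda[r_b^{(q+\lambda)}](x,y)\,\dd y$.

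The main obstacle is the self-consistency step for $c$, together with checking that the resolvent bookkeeping really merges into a single $\overline{\V}$ integral. To close the loop I evaluate the solution at $x=b$ and solve the resulting linear equation for $c$:
\[
c\Bigl(1-\int_b^{a\wedge bp^{-1}}\overline{\V}^{(p)}_\lambda[r_b^{(q+\lambda)}](b,z)\,\dd z\Bigr)=\frac{1+\int_{a\wedge bp^{-1}}^a\overline{\V}^{(p)}_\lambda[r_b^{(q+\lambda)}](b,y)\,Z^{(q+\lambda)}(py-b)\,\dd y}{Z^{(q+\lambda)}(a-b)},
\]
using $Z^{(q+\lambda)}(0)=1$. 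This yields $c=\ell^{(p)}_a(b)$. To justify the division, I expect that the bracket is positive; I would derive this from Remark~\ref{rem2}, since the total mass $\lambda\int r_b\le\lambda/(q+\lambda)<1$ gives $\int\overline{\V}^{(p)}_\lambda(b,z)\,\dd z\le\lambda/q$. If that bound is not sharp enough when $\lambda\ge q$, I would instead argue probabilistically: the integral equals the expected discounted number of visits of the reset mechanism to $b$, and this is $<1$ because $q>0$.
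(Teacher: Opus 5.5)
Your proposal is correct and follows the paper's proof. It uses the same renewal equation from conditioning on the first Poisson epoch and the same three sign cases, and for $a,b\in\R_+$ it freezes $c=\ell_a^{(p)}(b)$ as a constant, applies Theorem~\ref{Thm:Res-UAuxiliaryFuncIsFixedPoint}, collapses the resolvent as in Theorem~\ref{Thm:Res-TwoSidedExitUpwards-PositiveHalfAxis}(iii) and solves at $x=b$ — exactly the ``algebraic manipulations'' the paper leaves implicit.

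On the division, which the paper does not justify, one correction: $\int_b^{a\wedge bp^{-1}}\overline{\V}^{(p)}_\lambda[r_b^{(q+\lambda)}](b,z)\,\dd z$ is not an expected discounted number of visits. It is the discounted probability that the first Poisson epoch whose reset lands on $b$ occurs before $\tau^+_{a,U^b}$. It is therefore at most $\E(\ee^{-qT_1})=\lambda/(q+\lambda)<1$ for every $\lambda$, and the same bound follows analytically from its fixed-point equation together with $\lambda\int_b^a r_b^{(q+\lambda)}(x,y)\,\dd y\le\lambda/(q+\lambda)$.
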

\begin{proof}	\noindent 	 Let  
		 $a$, $b\in \mathbb R $ and $\ell_a^{(q)}(x)=\mathbb E_x(\ee ^{-q\tau^+_{a,U ^b}})$. Similarly as before, 
	\begin{align}\label{reflection1}
		\ell_a^{(q)}(x)
		&=\frac{Z^{(q+\lambda)}(x-b)}{Z^{(q+\lambda)}(a-b)}+\lambda \int_b^a r_b^{(q+\lambda )}(x,y)\ell_a^{(p)}(py)\dd y. 
	\end{align}
	(i) Consider $a$, $b \in \R_+$. Then similar line of logic as before can be used and the above equation reduces to 
	\begin{align*}
		\ell_a^{(p)}(x)
		&=\frac{Z^{(q+\lambda)}(x-b)}{Z^{(q+\lambda)}(a-b)}+\lambda \ell_a^{(p)}(b)\int_b^{a\wedge bp^{-1}} r_b^{(q+\lambda)}(x,y)\dd y +\lambda \int_{a\wedge bp^{-1}}^a r_b^{(q+\lambda )}(x,y)\ell_a^{(p)}(py)\dd y. 
	\end{align*}
Using Theorem \ref{Thm:Res-UAuxiliaryFuncIsFixedPoint}, we get that 
	\begin{align*}
	\ell_a^{(q)}(x)
	&=\frac{Z^{(q+\lambda)}(x-b)}{Z^{(q+\lambda)}(a-b)}+\lambda \ell_a^{(p)}(b)\int_b^{a\wedge bp^{-1}} r_b^{(q+\lambda)}(x,y)\dd y \\
	&\quad + \int_{a\wedge bp^{-1}}^a \overline{\V}^{(p)}_\lambda[r_b^{(q+\lambda)}](x,y)\Bigl(\frac{Z^{(q+\lambda)}(py-b)}{Z^{(q+\lambda)}(a-b)}+\lambda \ell_a^{(p)}(b)\int_b^{a\wedge bp^{-1}}r^{(q+\lambda)}_b(py,z)\dd z\bigr)\dd y,
\end{align*}
from which the result follows after some algebraic manipulations. 

\noindent 	(ii) For $a\in \R_+$ and $b \in \R_-$, $	\ell_a^{(q)}(x)$ takes the form of Eq. \eqref{reflection1} and the result follows by similar arguments as in  Theorems \ref{Thm:Res-TwoSidedExitUpwards-PositiveHalfAxis} and  \ref{Thm:Res-TwoSidedExitDownwards-PositiveHalfAxis}. 
\vspace{0.2cm}\\
	\noindent 	(iii) Consider $a$, $b\in \R_-$, then we have that 
	\begin{align*}
	\ell_a^{(q)}(x)
	&=\frac{Z^{(q+\lambda)}(x-b)}{Z^{(q+\lambda)}(a-b)}+\lambda \int_b^{b\vee ap^{-1}} r_b^{(q+\lambda )}(x,y)\ell_a^{(p)}(py)\dd y+\lambda  \int_{b\vee  ap^{-1}}^a r_b^{(q+\lambda )}(x,y)\dd y, 
\end{align*}	
and the result follows in 
a similar manner with the proof of Theorem \ref{Thm:Res-TwoSidedExitUpwards-PositiveHalfAxis}. 
	\end{proof}

		\noindent 	Now, let   $X^a\equiv\{X^a_t\}_{t \geq 0}$ with $  X^a_t=X_t-\sup _{0\leq s\leq t}(X_s-a)\vee 0$ be a SNLP reflected at  upper  level $a$. The supremum term pushes the process downward  whenever it attempts to upward-cross the level $a$.  The corresponding $q$-potential density of the process reflected at the upper level $a$ and killed upon exiting below $b$ is given by, see Theorem 8.11 in \cite{K2014} or \cite{P2004}, for 	$ x,y\in(b,a]$, 
	\[\mathbb{E}_x\left(\mathrm e^{-q\tau_{b,X^a}^-}\right)
	=
	Z^{(q)}(x-b)
	-
	\frac{Z^{(q)\prime}(a-b)}{W^{(q)\prime}(a-b)}W^{(q)}(x-b)
	,\quad 
	r_a^{(q)}(x,y)
	=
	\frac{W^{(q)}(x-b)}{W^{(q)\prime}(a-b)}
	W^{(q)\prime}(a-y)	
	-
	W^{(q)}(x-y), 
	\]
	with $W ^{(q)\prime}$ to mean the right derivative. We note that the above identity for $\mathbb{E}_x(\mathrm e^{-q\tau_{b,X^a}^-})$ is slightly different than the one in Theorem 8.11 in \cite{K2014}, however it follows immediately using Eq.  \eqref{eq:LTofscaleW}. 
		 Furthermore, we define the reflected at a upper level $a$ partial resetting process, $U^a\equiv\{U^a_t\}_{t \geq 0}$,  given by 
	\[
	\dd  U^a_t=\dd X_t-(1-p)U^a_{t^{-}}\dd N_t-\dd R^a_t, 
	\]
	where the  upward  regulator is  $R^a_t=\sup_{0 \leq s\leq t}\{X_s-(1-p)\int_{0}^sU^a_{u^-}\dd N_u-a\}\vee 0$ is a right continuous, nondecreasing process that pushes  upwards $U^a$ whenever it attempts to cross $a$;   we note that the   above  SDE similar remarks holds as in  Remark \ref{remref}. 
		\begin{proposition}[First passage time for the reflected process at upper  level]\label{downreflection}
		Let $a$, $b\in \mathbb  R$, $p \in (0,1)$, $q >0 $ and $\lambda \geq 0$. Then, it holds that 
		\begin{align*}
			\mathbb E_x(\ee ^{-q\tau^-_{b,U^a}})&={\mathcal{Z}_{b,a}^\p}r_a^{(q+\lambda )}(x)-
			\frac{{\mathcal{Z}_{b,a}^{\p\prime} }r_a^{(q+\lambda )}(a)}{{\mathcal{W}_{b,a}^{\p\prime}}r_a^{(q+\lambda )}(a)}{\mathcal{W}_{b,a}^\p}r_a^{(q+\lambda )}(x)+  \int_b^{a\wedge bp^{-1}} \overline{\V}^{(p)}_\lambda[r_a^{(q+\lambda)}](x,y)\dd y\mathbf 1_{\{a,b\in \mathbb R_+\}}\\
			&\quad + \ell_b^{(q)}(a)\int_{ap^{-1}\vee b}^{a}\overline{\V}^{(p)}_\lambda[r_a^{(q+\lambda)}](x,y)\dd y\mathbf 1_{\{a,b\in \mathbb R_-\}},
			\end{align*}
			where ${\mathcal{W}_{b,a}^{\p\prime} }r_a^{(q+\lambda )}(a)=\frac{\dd }{\dd x }{\mathcal{W}_{b,a}^{\p} }r_a^{(q+\lambda )}(x)|_{x=a}$, similarly for  ${\mathcal{Z}_{b,a}^{\p\prime} }r_a^{(q+\lambda )}(a)$  and 
			\begin{align*}
				\ell_b^{(q)}(a)
				=
			\Bigl[1-
			\displaystyle\int_{ap^{-1}\vee b}^{a}
			\overline{\mathcal V}^{(p)}_\lambda
			[r_a^{(q+\lambda)}](a,y)\,\mathrm{d}y\Bigr]^{-1}\Bigl[	{
					{\mathcal{Z}_{b,a}^{p}}r_a^{(q+\lambda)}(a)
					-
					\dfrac{
						{\mathcal{Z}_{b,a}^{p\prime}}r_a^{(q+\lambda)}(a)
					}{
						{\mathcal{W}_{b,a}^{p\prime}}r_a^{(q+\lambda)}(a)
					}
					{\mathcal{W}_{b,a}^{p}}r_a^{(q+\lambda)}(a)
				}\Bigr].
			\end{align*}
	\end{proposition}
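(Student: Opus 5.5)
We follow the integral-equation scheme used for Theorem \ref{Thm:Res-TwoSidedExitDownwards-PositiveHalfAxis} and Proposition \ref{upreflection}. Throughout we write $\ell_b^{(q)}(x):=\mathbb E_x(\ee^{-q\tau^-_{b,U^a}})$ for $x\in(b,a]$.

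\textbf{Step 1: renewal equation.} We condition on the first Poisson epoch $T$. Before $T$, the process $U^a$ coincides in law with the reflected process $X^a$ (Remark \ref{remref}, adapted to the upper barrier). At $T$ the process jumps to $pX^a_{T-}$, and it is not pushed further, since $p y\le a$ whenever $y\le a$ and $a\ge 0$; the case $a<0$ is handled below. Using the strong Markov property (Theorem \ref{sdepart} and its reflected analogue), the exit identity for $X^a$ and its potential density $r_a^{(q+\lambda)}$, we obtain
\[
\ell_b^{(q)}(x)=Z^{(q+\lambda)}(x-b)-\frac{Z^{(q+\lambda)\prime}(a-b)}{W^{(q+\lambda)\prime}(a-b)}W^{(q+\lambda)}(x-b)+\lambda\int_b^a r_a^{(q+\lambda)}(x,y)\,\ell_b^{(q)}(py)\,\dd y .
\]
Here $\ell_b^{(q)}$ is bounded by $1$ and is continuous by the strong Feller property (Proposition \ref{Prop:BertoinProp}). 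The kernel $r_a^{(q+\lambda)}$ is a killed potential density, so its total mass is at most $1/(q+\lambda)$, as in \eqref{Eq:ResolventInequality}. Hence Theorem \ref{Thm:Res-UAuxiliaryFuncIsFixedPoint} applies with $\gamma=\lambda<q+\lambda$.

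\textbf{Step 2: case split according to where $py$ lands.}

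(i) If $a,b\in\R_+$, then $py<b$ for $y<bp^{-1}$, so $\ell_b^{(q)}(py)=1$ on $[b,a\wedge bp^{-1})$. This piece moves into the free term. Theorem \ref{Thm:Res-UAuxiliaryFuncIsFixedPoint} on $[a\wedge bp^{-1},a]$ then yields the operators $\mathcal Z^p_{b,a}r_a^{(q+\lambda)}$ and $\mathcal W^p_{b,a}r_a^{(q+\lambda)}$, multiplied by the classical constant $Z^{(q+\lambda)\prime}(a-b)/W^{(q+\lambda)\prime}(a-b)$. The extra terms collapse, via the telescoping identity $r+\int\overline{\V}_\lambda r=\lambda^{-1}\overline{\V}_\lambda$ from Theorem \ref{Thm:Res-TwoSidedExitUpwards-PositiveHalfAxis}(iii), to $\int_b^{a\wedge bp^{-1}}\overline{\V}^{(p)}_\lambda[r_a^{(q+\lambda)}](x,y)\,\dd y$.

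(ii) If $a\in\R_+$ and $b\in\R_-$, there is no forced region, and the solution follows directly from Theorem \ref{Thm:Res-UAuxiliaryFuncIsFixedPoint}.

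(iii) If $a,b\in\R_-$, then for $y>ap^{-1}$ we have $py>a$, so the reset is reflected to $a$ and $\ell_b^{(q)}(py)$ is replaced by the unknown constant $\ell_b^{(q)}(a)$ on $[ap^{-1}\vee b,a]$. Treating $\ell_b^{(q)}(a)$ as a parameter, Theorem \ref{Thm:Res-UAuxiliaryFuncIsFixedPoint} gives $\ell_b^{(q)}(x)$ as an affine function of $\ell_b^{(q)}(a)$. Setting $x=a$ and solving the resulting scalar linear equation gives the stated formula for $\ell_b^{(q)}(a)$. This requires $1-\int_{ap^{-1}\vee b}^a\overline{\V}^{(p)}_\lambda[r_a^{(q+\lambda)}](a,y)\,\dd y\neq0$. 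That is plausible because this integral is the Laplace transform of the event that $U^a$ hits $a$ via reset before exiting below $b$, and it should be strictly less than $1$ since $q>0$.

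\textbf{Step 3: replacing the classical constant.} This is the step we expect to be the main obstacle: showing that the coefficient $Z^{(q+\lambda)\prime}(a-b)/W^{(q+\lambda)\prime}(a-b)$ can be rewritten as $\mathcal Z^{p\prime}_{b,a}r_a^{(q+\lambda)}(a)/\mathcal W^{p\prime}_{b,a}r_a^{(q+\lambda)}(a)$. Unlike the killed case, $\overline{\V}^{(p)}_\lambda[r_a](a,y)$ does not vanish, so we cannot simply evaluate at $x=a$. Instead we plan to differentiate in $x$ at $x=a$, using the derivative form of the kernel,
\[
\partial_x r_a^{(q+\lambda)}(x,y)\big|_{x=a}=\frac{W^{(q+\lambda)\prime}(a-b)}{W^{(q+\lambda)\prime}(a-b)}W^{(q+\lambda)\prime}(a-y)-W^{(q+\lambda)\prime}(a-y)=0 .
\]
The aim is to show that every $\V_n$ term has zero $x$-derivative at $a$, i.e.\ the Neumann-type boundary condition of reflection, $\ell_b^{(q)\prime}(a)=0$. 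Justifying termwise differentiation needs the uniform bounds of Remark \ref{rem2} together with right-differentiability of $W^{(q)}$. Once $\ell_b^{(q)\prime}(a)=0$ holds, the derivative of the fixed-point formula at $a$ forces the constant multiplying $\mathcal W^p_{b,a}r_a^{(q+\lambda)}$ to equal the ratio of derivatives, which completes the identity.
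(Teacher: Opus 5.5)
Your proposal is correct and follows essentially the same route as the paper. The steps match one for one: the first-Poisson-epoch renewal equation with kernel $r_a^{(q+\lambda)}$, and the three-case split, including the unknown reset-to-$a$ constant $\ell_b^{(q)}(a)$ solved by evaluating at $x=a$ when $a,b\in\R_-$. Then Theorem~\ref{Thm:Res-UAuxiliaryFuncIsFixedPoint} with the telescoping identity, and finally the vanishing of $\partial_x r_a^{(q+\lambda)}(x,y)$ at $x=a$, which gives $\mathcal{W}^{p\prime}_{b,a}r_a^{(q+\lambda)}(a)=W^{(q+\lambda)\prime}(a-b)$ and $\mathcal{Z}^{p\prime}_{b,a}r_a^{(q+\lambda)}(a)=Z^{(q+\lambda)\prime}(a-b)$. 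Your detour through the Neumann condition $\ell_b^{(q)\prime}(a)=0$ is equivalent but not needed, since the termwise vanishing derivative already yields these two identities directly.
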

		\begin{proof}
			For   
			$a$, $b\in \mathbb R $, let  $\ell_b^{(q)}(x):=\mathbb E_x(\ee ^{-q\tau^-_{b,U^a}})$. Similarly as before, 
			\begin{align*}
				\ell_b^{(q)}(x)
				&=Z^{(q+\lambda)}(x-b)
					-
				\frac{Z^{(q+\lambda)\prime }(a-b)}{W^{(q+\lambda)\prime}(a-b)}W^{(q+\lambda)}(x-b)+\lambda \int_b^a r_a^{(q+\lambda )}(x,y)\ell_b^{(p)}(py)\dd y. 
			\end{align*}
			(i) Consider $a$, $b \in \R_+$ and therefore the above equation in reduced to 
			\begin{align*}
				\ell_b^{(q)}(x)
				&=Z^{(q+\lambda)}(x-b)
				-
				\frac{Z^{(q+\lambda)\prime }(a-b)}{W^{(q+\lambda)\prime}(a-b)}W^{(q+\lambda)}(x-b)+\lambda \int_b^{a\wedge bp^{-1}} r_a^{(q+\lambda )}(x,y)\dd y\\
				&\quad + \lambda \int_{a\wedge bp^{-1}}^a r_a^{(q+\lambda )}(x,y)\ell_b^{(p)}(py)\dd y, 
			\end{align*}
			which, similar  as before  has solution of the form 
			\begin{align*}
				\ell_b^{(q)}(x)
				&={\mathcal{Z}_{b,a}^\p}r_a^{(q+\lambda )}(x)	-
				\frac{Z^{(q+\lambda)\prime }(a-b)}{W^{(q+\lambda)\prime}(a-b)}{\mathcal{W}_{b,a}^\p}r_a^{(q+\lambda )}(x)+  \int_b^{a\wedge bp^{-1}} \overline{\V}^{(p)}_\lambda[r_a^{(q+\lambda)}](x,y)\dd y. 
			\end{align*}
			Noticing that $
			\frac{\partial}{\partial x}
			r_a^{(q+\lambda)}(x,y)
			\bigl|_{x=a}
			=
			W^{(q+\lambda)\prime}(a-y)
			-
			W^{(q+\lambda)\prime}(a-y)
			=
			0, 
			$ gives 	$
			\frac{\partial}{\partial x}
			\overline{{\V}}_{\lambda}^{(p)}
			[r_a^{(q+\lambda)}](x,y)
			\bigl|_{x=a}
			=0,
			$
from which it follows that 			
	\[
\mathcal{W}_{b,a}^{p \prime}[r_a^{(q+\lambda)}]
(a)
=
W^{(q+\lambda)\prime}(a-b) \quad \text{and} \quad  \mathcal{Z}_{b,a}^{p \prime}[r_a^{(q+\lambda)}]
(a)
=
Z^{(q+\lambda)\prime}(a-b). 
\]
Combining the last two eqs the result follows immediately. 

\noindent 				(ii) The case  $a\in \mathbb R_-$, $b \in \R_+$
is exactly similar to (i). 
			
\noindent 				(iii) For   $a$, $b\in \mathbb R_-$,  we have that 
	\begin{align*}
	\ell_b^{(q)}(x)
	&=Z^{(q+\lambda)}(x-b)
	-
	\frac{Z^{(q+\lambda)\prime}(a-b)}{W^{(q+\lambda)\prime}(a-b)}W^{(q+\lambda)}(x-b)+\lambda \int_b^{ap^{-1}\vee  b} r_{a}^{(q+\lambda )}(x,y)\ell_b^{(q)}(py)\dd y\\
	&\quad +\lambda \ell_b^{(q)}(a)\int_{ap^{-1}\vee  b}^{a}r_{a}^{(q+\lambda )}(x,y)\dd y, 
\end{align*}
			from which, similar as before,  the result  follows.
			\end{proof}

\subsection{The total resetting as a limiting case }\label{limitcase}
In this section, we prove the limiting case $p \rightarrow 0$ which coincides with the so-called total stochastic resetting case. To do this, we  first  show that the resolvent series $\overline{\V}_\lambda^{(p)}[\f^{(q+\lambda)}]$ converges  as $p \rightarrow 0$ for all choices of sets $\B$ with  assumptions \textbf{(H1)}  and resolvent densities $\f^{(q+\lambda)}$ introduced in Section  \ref{prelim}. This is the purpose of the next theorem.
\begin{lemma}\label{Thm:ResolventLimitPto0Exists}
	Let $q,\lambda >0$, $p \in (0,1)$ and $[u_1,u_2] \subset \B$ s.t.~$[pu_1,pu_2] \subset \B$. Then, for $x \in \B$ and $f \in C_\mathrm{b}(\B)$, 
	\begin{equation}
		\lim\limits_{p\rightarrow 0} \int^{u_2}_{u_1} \overline{\V}^{(p)}_\lambda[\f^{(q+\lambda)}](x,y) f(py) \dd y  = f(0)  \frac{\lambda \int^{u_2}_{u_1} \f^{(q+\lambda)}(x,y) \dd y}{1- \lambda \int^{u_2}_{u_1} \f^{(q+\lambda)}(0,y) \dd y}, 
	\end{equation} 
	where $\overline{\V}^{(p)}_\lambda[\f^{(q+\lambda)}]$ is given in Definition \ref{Def:ResolventDefn}.
\end{lemma}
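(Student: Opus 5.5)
We expand the resolvent series term by term and pass to the limit $p\to 0$ in each term, justifying the interchange of limit and sum by the uniform geometric bound of Remark~\ref{rem2}. Write $\mathcal{V}_p f(x):=\int_{u_1}^{u_2}\f^{(q+\lambda)}(x,y)f(py)\,\dd y$. As in the proof of Theorem~\ref{Thm:Res-UAuxiliaryFuncIsFixedPoint},
\[
\int_{u_1}^{u_2}\overline{\V}^{(p)}_\lambda[\f^{(q+\lambda)}](x,y)f(py)\,\dd y=\sum_{k=1}^\infty \lambda^k\mathcal{V}_p^k f(x).
\]
By Eq.~\eqref{Eq:ResolventInequality}, $\|\lambda^k\mathcal{V}_p^kf\|\le (\lambda/(q+\lambda))^k\|f\|$ uniformly in $p$. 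Since $\lambda/(q+\lambda)<1$ (here $\gamma=\lambda<q+\lambda$), dominated convergence for series lets us take $p\to0$ termwise.

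The core step is an induction showing $\lim_{p\to0}\mathcal{V}_p^kf(x)=f(0)\,c(x)\,c(0)^{k-1}$, where $c(x):=\int_{u_1}^{u_2}\f^{(q+\lambda)}(x,y)\,\dd y$.

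\emph{Base case $k=1$.} For $y\in[u_1,u_2]$ we have $py\to0$, so $f(py)\to f(0)$ by continuity of $f$ at $0$; note $0\in\B$ is implicitly needed, as $pu_i\to0$ with $[pu_1,pu_2]\subset\B$ and $\B$ closed. Since $|f(py)|\le\|f\|$ and $\f^{(q+\lambda)}(x,\cdot)$ is integrable, bounded convergence gives $\mathcal{V}_pf(x)\to f(0)c(x)$.

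\emph{Inductive step.} Write $\mathcal{V}_p^{k}f(x)=\int_{u_1}^{u_2}\f^{(q+\lambda)}(x,y)\,g_p(py)\,\dd y$ with $g_p:=\mathcal{V}_p^{k-1}f$. We need $g_p(py)\to f(0)c(0)c(0)^{k-2}$ pointwise in $y$. This is a joint limit: both the function $g_p$ and its argument $py$ move. We handle it by showing that $\{\mathcal{V}_p h:\ p\in(0,1),\ \|h\|\le M\}$ is equicontinuous at $0$, which reduces to continuity of $z\mapsto \f^{(q+\lambda)}(z,\cdot)$ in $L^1([u_1,u_2])$. That $L^1$-continuity follows from the explicit scale-function forms in \textbf{(S1)}: $W^{(q+\lambda)}$ is continuous except possibly at $0$ in the bounded variation case, and translation is $L^1$-continuous. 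The strong Feller property (Proposition~\ref{Prop:BertoinProp}) gives the same conclusion. Then
\[
|g_p(pz)-L|\le|g_p(pz)-g_p(0)|+|g_p(0)-L|,
\]
where the first term vanishes by equicontinuity and the second by the induction hypothesis at $x=0$. Bounded convergence then closes the induction.

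\emph{Summation.} Summing the limits,
\[
\sum_{k\ge1}\lambda^k f(0)c(x)c(0)^{k-1}=\frac{f(0)\,\lambda c(x)}{1-\lambda c(0)}.
\]
The geometric series converges because $\lambda c(0)\le\lambda/(q+\lambda)<1$. This is the claimed formula.

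The main obstacle is the equicontinuity at $0$ in the inductive step, i.e.\ the uniform-in-$p$ control of the moving-argument limit. I would first try to derive it directly from the $L^1$-continuity of the kernels in \textbf{(S1)}, using $\sup_z\int|\f(z,y)|\,\dd y\le 1/(q+\lambda)$ to keep all bounds uniform in $p$.
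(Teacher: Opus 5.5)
Your proposal is correct and follows essentially the paper's route: induction on $k$ for the limit of each term $\lambda^k\int_{u_1}^{u_2}\V^{(p)}_k[\f^{(q+\lambda)}](x,y)f(py)\,\dd y$, domination by $(\lambda/(q+\lambda))^k\|f\|$ to exchange $\lim_{p\to0}$ with the sum, and summation of the resulting geometric series. The one real difference is in the inductive step, where the paper replaces $pz$ by $0$ only by citing continuity of the $p$-dependent function $x\mapsto\lambda^{k-1}\int_{u_1}^{u_2}\V^{(p)}_{k-1}[\f^{(q+\lambda)}](x,y)\,\dd y$; that does not by itself control the joint limit, whereas your equicontinuity at $0$, obtained from $L^1$-continuity of $z\mapsto\f^{(q+\lambda)}(z,\cdot)$ via the explicit \textbf{(S1)} forms, does justify it (drop the aside that the strong Feller property alone gives this, since it yields only weak, not $L^1$-norm, continuity of the density).
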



\begin{proof} We shall  show by induction that,  
for $k \geq 1$,    $\lim_{p \rightarrow 0} \lambda^k \int^{u_2}_{u_1} \V^{(p)}_k[\f^{(q+\lambda)}] (x,y) f(py)\, \dd y$ exists. For $k = 1$ we have  
	\begin{equation}
		\lim\limits_{p \rightarrow 0} \lambda \int^{u_2}_{u_1} \V^{(p)}_1[\f^{(q+\lambda)}] (x,y) f(py) \dd y =  f(0) \cdot \lambda \int^{u_2}_{u_1} \f^{(q+\lambda)} (x,y)\, \dd y, \label{Eq:CaseFork=1-pLimit}
	\end{equation}
	which follows by  Remark \ref{rem2} and dominated convergence  theorem.  Now, for arbitrary $k \geq 2$,  using Definition \ref{Def:ResolventDefn},  Fubini's Theorem, Remark \ref{rem2} and dominated convergence, we have that 
	\begin{align}
		\lim\limits_{p\rightarrow 0}\lambda^{k} \int^{u_2}_{u_1} \V^{(p)}_k[\f^{(q+\lambda)}] (x,y) f(py) \, \dd y &= 	f(0) \cdot \lambda \int^{u_2}_{u_1} \f^{(q+\lambda)}(x,z) \,\Bigl( \lim\limits_{p\rightarrow 0} \, \lambda^{k-1} \int^{u_2}_{u_1}  \V^{(p)}_{k-1}[\f^{(q+\lambda)}] (pz,y)  \, \dd y \Bigr) \, \dd z \notag \\
		&= 	f(0) \cdot \lambda \int^{u_2}_{u_1} \f^{(q+\lambda)}(x,z) \, \Bigl( \lim\limits_{p\rightarrow 0} \, \lambda^{k-1} \int^{u_2}_{u_1}  \V^{(p)}_{k-1}[\f^{(q+\lambda)}] (0,y)  \, \dd y \Bigr) \, \dd z, \label{Eq:Recursive-p-Limit}
	\end{align}
	where the last equality holds by recalling that $\lambda^{k-1} \int^{u_2}_{u_1}  \V^{(p)}_{k-1}[\f^{(q+\lambda)}] (x,y)  \, \dd y \in C_\mathrm{b}(\B)$ for $k \geq 2$. Now, by using Eqs.~\eqref{Eq:CaseFork=1-pLimit} and \eqref{Eq:Recursive-p-Limit} above with $f = 1$ and $x=0$, we obtain inductively for $k \geq 1$ that 
	\[
	\lim\limits_{p\rightarrow 0}\lambda^{k} \int^{u_2}_{u_1} \V^{(p)}_k[\f^{(q+\lambda)}] (0,y) \, \dd y =  \Bigl( \lambda \int^{u_2}_{u_1} \f^{(q+\lambda)}(0,z) \, \dd z \Bigr)^k,
	\]
	and thus  Eq.~\eqref{Eq:Recursive-p-Limit} becomes 
	\begin{align}
		\lim\limits_{p\rightarrow 0}\lambda^{k} \int^{u_2}_{u_1} \V^{(p)}_k[\f^{(q+\lambda)}] (x,y) f(py) \, \dd y 
		&= 	f(0) \cdot \Bigl(\lambda \int^{u_2}_{u_1} \f^{(q+\lambda)}(x,z) \, \dd z \Bigr) \cdot \Bigl( \lambda \int^{u_2}_{u_1} \f^{(q+\lambda)}(0,z) \, \dd z \Bigr)^{k-1}, \notag
	\end{align}
	for all $k \geq 1$. 
	Therefore, from the above equation, we have for all $k \geq 1$ that
	\begin{align}
		\sum^\infty_{k=1} \lim\limits_{p\rightarrow 0}\lambda^{k} \int^{u_2}_{u_1} \V^{(p)}_k[\f^{(q+\lambda)}] (x,y) f(py) \, \dd y &= f(0) \cdot \Bigl(\lambda \int^{u_2}_{u_1} \f^{(q+\lambda)}(x,z) \, \dd z \Bigr) \sum^{\infty}_{k=1} \Bigl( \lambda \int^{u_2}_{u_1} r^{(q+\lambda)}(0,z) \, \dd z \Bigr)^{k-1} \notag \\
		&= f(0) \cdot \frac{\lambda \int^{u_2}_{u_1} \f^{(q+\lambda)}(x,z) \, \dd z}{1-\lambda \int^{u_2}_{u_1} \f^{(q+\lambda)}(0,z) \, \dd z}, \notag
	\end{align}
	where by the convergence follows from the geometric series since $\lambda \int^{u_2}_{u_1} \f^{(q+\lambda)}(0,z) \, \dd z \leq \frac{\lambda}{q+\lambda}$. Lastly, by utilising the above equation and uniform convergence of the series to interchange the limit and summation, we have  that
	\begin{align}
		\lim\limits_{p\rightarrow 0}\int^{u_2}_{u_1} \overline{\V}_\lambda^{(p)}[\f^{(q+\lambda)}](x,y) \dd y &=  \sum^\infty_{k=1} \lim\limits_{p\rightarrow 0} \, \lambda^k \int^{u_2}_{u_1} \V^{(p)}_k[\f^{(q+\lambda)}] (x,y) \dd y \notag \\
		&= f(0) \cdot \frac{\lambda \int^{u_2}_{u_1} \f^{(q+\lambda)}(x,z) \, \dd z}{1-\lambda \int^{u_2}_{u_1} \f^{(q+\lambda)}(0,z) \, \dd z}, \notag
	\end{align}
	which completes the proof.
\end{proof}

\noindent We note that similar to Remark \ref{valuesofr}, we shall use the  the forms in \textbf{(S1)} for $\f^{(q)}(x,y)$  in Lemma \ref{Thm:ResolventLimitPto0Exists}, depending on the exit problem under consideration. 

 We now define the corresponding scale functions for the  total resetting environment. Using Lemma  \ref{Thm:ResolventLimitPto0Exists}, and taking $p \rightarrow 0$ to the partial resetting scale function operators in Eq.~\eqref{Eq:PSR-ScaleFunc}, 
we have,  for $a, b \in \mathbb{R}$ s.t. $x \in[b, a]$, that 
	\begin{align}
	\begin{split}	\mathcal{W}_{b,a}^{(q+\lambda)}(x) & =W^{(q+\lambda)}(x-b)+W^{(q+\lambda)}(-b) \cdot \frac{\lambda \int_b^a \f^{(q+\lambda)}(x, y) \mathrm{d} y}{1-\lambda \int_b^a \f^{(q+\lambda)}(0, y) \mathrm{d} y} \mathbf{1}_{\left\{a \in \mathbb{R}_{+}, b \in \mathbb{R}_{-}\right\}}, \\
		\mathcal{Z}_{b,a}^{(q+\lambda)}(x) & =Z^{(q+\lambda)}(x-b)+Z^{(q+\lambda)}(-b) \cdot \frac{\lambda \int_b^a \f^{(q+\lambda)}(x, y) \mathrm{d} y}{1-\lambda \int_b^a \f^{(q+\lambda)}(0, y) \mathrm{d} y}\mathbf{1}_{\left\{a \in \mathbb{R}_{+}, b \in \mathbb{R}_{-}\right\}},
		\label{ap3}
	\end{split}
\end{align} 
be the scale  functions of the total resetting L\'evy process.

The next  the result gives expressions for the  two-sided exit upwards and downwards identities of the TSR-LP.

\begin{proposition}[Upward and downward two-sided exit problem]
	\label{ts1}
	For $a, b \in \mathbb{R}$ and $x \in[b, a]$, the following limits hold
	\[
	\begin{aligned}
		\lim _{p \rightarrow 0} \mathbb{E}_x\left(\mathrm{e}^{-q \tau_{a, U}^{+}} \mathbf{1}_{\left\{\tau_{a, U}^{+}<\tau_{b, U}^{-}\right\}}\right)= & \frac{	\mathcal{W}_{b,a}^{(q+\lambda)}(x)}{	\mathcal{W}_{b,a}^{(q+\lambda)}(a)}+\lambda \int_b^a r^{(q+\lambda)}(x, y) \mathrm{d} y \mathbf{1}_{\left\{a, b \in \mathbb{R}_{-}\right\}}, \\
		\lim _{p \rightarrow 0} \mathbb{E}_x\left(\mathrm{e}^{-q \tau_{b, U}^{-}} \mathbf{1}_{\left\{\tau_{b, U}^{-}<\tau_{a, U}^{+}\right\}}\right)= & \mathcal{Z}_{b,a}^{(q+\lambda)}(x )-\frac{\mathcal{W}_{b,a}^{(q+\lambda)}(x )}{\mathcal{W}_{b,a}^{(q+\lambda)}(a )} \mathcal{Z}_{b,a}^{(q+\lambda)}(a)  +\lambda \int_b^a r^{(q+\lambda)}(x, y) \mathrm{d} y \mathbf{1}_{\left\{a, b \in \mathbb{R}_{+}\right\}}.
	\end{aligned}
	\]
	\end{proposition}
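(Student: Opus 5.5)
The plan is to pass to the limit $p\to 0$ directly in the identities of Theorems \ref{Thm:Res-TwoSidedExitUpwards-PositiveHalfAxis} and \ref{Thm:Res-TwoSidedExitDownwards-PositiveHalfAxis}, applying Lemma \ref{Thm:ResolventLimitPto0Exists} to every integral involving $\overline{\V}^{(p)}_\lambda[r^{(q+\lambda)}]$. I would treat the three sign configurations of $(a,b)$ separately, because the integration limits $b\vee(a\wedge bp^{-1})$, $a\wedge(ap^{-1}\vee b)$, $a\wedge bp^{-1}$ and $ap^{-1}\vee b$ all degenerate as $p\to 0$, and they do so differently in each case.

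\textbf{Case $a,b\in\R_+$.} For small $p$ we have $bp^{-1}>a$, so the integration domain in \eqref{Eq:PSR-ScaleFunc} collapses to $[a,a]$. Hence $\mathcal W^p_{b,a}r^{(q+\lambda)}(x)=W^{(q+\lambda)}(x-b)$ and likewise for $\mathcal Z$, with no limit needed. The remaining term $\int_b^{a}\overline{\V}^{(p)}_\lambda[r^{(q+\lambda)}](x,y)\,\dd y$ is handled by Lemma \ref{Thm:ResolventLimitPto0Exists} with $f\equiv 1$ and $[u_1,u_2]=[b,a]$. Here I must check the hypothesis $[pb,pa]\subset\B$; if it fails, I would instead rerun the proof of that lemma directly with $f(py)$ replaced by the constant boundary value coming from the renewal equation \eqref{Eq:PSR-ExitDownwardsMainRenewal}. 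The point is that $pz\to 0<b$, so $h(pz)=1$ and $g(pz)=0$. The inner iterates then vanish, and only the $k=1$ term survives, giving $\lambda\int_b^a r^{(q+\lambda)}(x,y)\,\dd y$. This accounts for the indicator term, which carries no denominator.

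\textbf{Case $a,b\in\R_-$.} This is symmetric: $ap^{-1}<b$ for small $p$, so the scale operators again reduce to classical scale functions. The extra term $\int_b^a\overline{\V}^{(p)}_\lambda\,\dd y$ reduces similarly to $\lambda\int_b^a r^{(q+\lambda)}(x,y)\,\dd y$, because $g(pz)=1$ for all $z\in[b,a]$ after resetting toward $0>a$. One has to be careful here: in the renewal representation only the first iterate carries the inhomogeneity, while the fixed point is $g(py)\equiv 1$. I would redo the limit at the level of the integral equation \eqref{Eq:PSR-ExitUpwardsMainRenewal}. For $p$ small, $g_{a,b}(py)=1$ identically, so $g$ is given explicitly with no resolvent at all. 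That yields the claimed formula and avoids the series entirely.

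\textbf{Case $a\in\R_+$, $b\in\R_-$.} The integration range in \eqref{Eq:PSR-ScaleFunc} is $[b,a]$, and $pb,pa\in[b,a]$, so Lemma \ref{Thm:ResolventLimitPto0Exists} applies with $f(y)=W^{(q+\lambda)}(y-b)$ (respectively $Z^{(q+\lambda)}(y-b)$), which lies in $C_\mathrm b([b,a])$. It produces exactly the correction $W^{(q+\lambda)}(-b)\cdot\lambda\int_b^a r/(1-\lambda\int_b^a r(0,\cdot))$ in \eqref{ap3}. Taking limits separately in numerator and denominator of the fractions then gives the result, provided $\mathcal W^{(q+\lambda)}_{b,a}(a)>0$. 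This holds since $W^{(q+\lambda)}(a-b)>0$ and the correction is nonnegative; the latter follows from $\lambda\int_b^a r(0,y)\,\dd y\le\lambda/(q+\lambda)<1$.

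The main obstacle I expect is justifying the interchange of limit and the boundary behaviour in the degenerate cases, where the hypothesis $[pu_1,pu_2]\subset\B$ of Lemma \ref{Thm:ResolventLimitPto0Exists} may fail. I will resolve this by arguing directly from the renewal equations for $p$ small, where the unknown is constant on the reset region.
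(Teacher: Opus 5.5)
Your proposal is correct and follows the paper's route. You split by the signs of $a,b$, note that the integration ranges in \eqref{Eq:PSR-ScaleFunc} collapse in the same-sign cases, and apply Lemma~\ref{Thm:ResolventLimitPto0Exists} with $f(y)=W^{(q+\lambda)}(y-b)$ (resp.\ $Z^{(q+\lambda)}(y-b)$) when $b<0<a$.

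Your fallback for the same-sign cases is the right justification and is cleaner than the paper's bare limit of the bounds. Reading the identities off the renewal equations works because $g_{a,b}^{(q)}(py)$ and $h_{a,b}^{(q)}(py)$ are constant for $p<b/a$ (resp.\ $p<a/b$), so the formulas hold exactly. Your first idea of invoking the lemma with $[u_1,u_2]=[b,a]$ is not the right object there. The resolvent in $\int_b^{a\wedge bp^{-1}}\overline{\V}^{(p)}_\lambda[r^{(q+\lambda)}](x,y)\,\dd y$ is built on the internal range $[a\wedge bp^{-1},a]$, which degenerates and forces $\overline{\V}^{(p)}_\lambda=\lambda r^{(q+\lambda)}$.
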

\begin{proof}
	First note that for  $p \in (0,1]$, we have that $h_1(p):=a \wedge(a p^{-1} \vee b)$ and $h_2(p):=b \vee(a \wedge b p^{-1})$ are continuous for $p \in(0,1]$. In addition, since $p=0$ is a limit point of $( 0,1 ]$, we have $h_1(p) \rightarrow h_1(0)$ and $h_2(p) \rightarrow h_2(0)$ as $p \rightarrow 0$.
	\noindent Now, we shall use the continuity of these functions to determine the limiting case $p \rightarrow 0$ for the two-sided exit identities given in Theorems \ref{Thm:Res-TwoSidedExitUpwards-PositiveHalfAxis} and \ref{Thm:Res-TwoSidedExitDownwards-PositiveHalfAxis}.
	
	\vspace{0.2cm}
\noindent 	(i)  Let $a, b \in \mathbb{R}_{+}$. Observe that $\lim _{p \rightarrow 0}(a \wedge b p^{-1})=a$, $\lim _{p \rightarrow 0} a \wedge(a p^{-1} \vee b)=a$ and $\lim _{p \rightarrow 0} b \vee(a \wedge b p^{-1})= a$. Thus,  we have for $x \in[b, a]$ that
\begin{align*}
	\lim _{p \rightarrow 0} \mathcal{W}_{b,a}^{\p}r^{(q+\lambda)}(x ) & =W^{(q+\lambda)}(x-b), \\
	\lim _{p \rightarrow 0} \mathcal{Z}_{b,a}^{\p}r^{(q+\lambda)}(x ) & =Z^{(q+\lambda)}(x-b), \\
	\lim _{p \rightarrow 0} \int_{b}^{a\wedge bp^{-1}}\overline{\V}^{(p)}_\lambda[r^{(q+\lambda)}](x,y) 	\dd y & =\lambda \int_b^a r^{(q+\lambda)}(x, y) \mathrm{d} y,
\end{align*}
which, after using Eqs. in \eqref{ap3}, yields the required limiting forms of the exit identities.

	\vspace{0.2cm}
\noindent 	(ii) Let $a, b \in \mathbb{R}_{-}$. Observe that $ \lim _{p \rightarrow 0} a \wedge(a p^{-1} \vee b)=b$, and $\lim _{p \rightarrow 0} b \vee(a \wedge b p^{-1})= b$. Thus, for $x \in[b, a]$, we have 
\[
\begin{aligned}
	\lim _{p \rightarrow 0} \mathcal{W}_{b,a}^{\p}r^{(q+\lambda)}(x ) & =W^{(q+\lambda)}(x-b), \\
	\lim _{p \rightarrow 0} \mathcal{Z}_{b,a}^{\p}r^{(q+\lambda)}(x) & =Z^{(q+\lambda)}(x-b), \\
	\lim _{p \rightarrow 0}  \int_{ap^{-1}\vee b}^{a}\overline{\V}^{(p)}_\lambda[r^{(q+\lambda)}](x,y) \dd y & =\lambda \int_b^a r^{(q+\lambda)}(x, y) \mathrm{d} y,
\end{aligned}
\]
and using Eq. \eqref{ap3} the result follows.

\vspace{0.2cm}
\noindent 	(iii)
 Let $a \in \mathbb{R}_{+}$, $b \in \mathbb{R}_{-}$. In addition, $ \lim _{p \rightarrow 0} a \wedge(a p^{-1} \vee b)=b$, and $\lim _{p \rightarrow 0} b \vee(a \wedge b p^{-1})=b$. Then, by using these limits along with using Lemma \ref{Thm:ResolventLimitPto0Exists}, we have for $x \in[b, a]$ that

$$
\begin{aligned}
	& \lim _{p \rightarrow 0} \mathcal{W}_{b,a}^{\p}r^{(q+\lambda)}(x )=W^{(q+\lambda)}(x-b)+W^{(q+\lambda)}(-b) \cdot \frac{\lambda \int_b^a r^{(q+\lambda)}(x, y) \mathrm{d} y}{1-\lambda \int_b^a r^{(q+\lambda)}(0,  y) \mathrm{d} y} \\
	& \lim _{p \rightarrow 0} \mathcal{Z}_{b,a}^{\p}r^{(q+\lambda)}(x ; b, a)=Z^{(q+\lambda)}(x-b)+Z^{(q+\lambda)}(-b) \cdot \frac{\lambda \int_b^a r^{(q+\lambda)}(x, y) \mathrm{d} y}{1-\lambda \int_b^a r^{(q+\lambda)}(0,  y) \mathrm{d} y}
\end{aligned}
$$
and thus, by using the above limits along with Eqs. in \eqref{ap3} the required exit identities are obtained.
	\end{proof}
\noindent 	Similarly for the total resetting exit identities under reflection are given by the following proposition. 
	\begin{proposition}[Upward and downward exit problems under reflection]
		\label{ts3}
		For $a, b \in \mathbb{R}$, the following limits, in terms of the resolvents ${r}_b^{(q+\lambda)}$ and  ${r}_a^{(q+\lambda)}$ defined in Section \ref{reflection},  hold
		\begin{itemize}
			\item[\upshape{(i)}] for  $x\in[b,a)$, 
			\begin{align*}
			\lim_{p\to 0}	\mathbb 	E_x(\ee ^{-q\tau^+_{a,U^b }})=\frac{{\mathcal{Z}_{b,a}^{(q+\lambda )}}(x)}{	{\mathcal{Z}_{b,a}^{(q+\lambda )}}(a)}+\frac{\int_{b}^{a}r_b^{(q+\lambda)}(x,y)\dd y \mathbf 1_{\{a,b\in \mathbb R_+\}}}{{\mathcal{Z}_{b,a}^{(q+\lambda )}}(a)\bigl(1-\lambda \int_{b}^{a}r_b^{(q+\lambda )}(b,y)\dd y \bigr)}+\lambda \int_{b}^{a}r_b^{(q+\lambda )}(x,y)\mathbf 1_{\{a,b\in \mathbb R_-\}},
			\end{align*} 
							\item[\upshape{(ii)}]  for $ x\in(b,a]$, 
							\begin{align*}
	\lim_{p\to 0}\mathbb E_x(\ee ^{-q\tau^-_{b,U^a}})&=\mathcal{Z}_{b,a}^{(q+\lambda )}(x)-\frac{\mathcal{Z}_{b,a}^{(q+\lambda )\prime}(a)}{\mathcal{W}_{b,a}^{(q+\lambda )\prime}(a)}\mathcal{W}_{b,a}^{(q+\lambda )}(x)+\lambda\int_b^ar_a^{(q+\lambda)}(x,y)\dd y \mathbf 1_{\{a,b\in \mathbb R_+\}}\\
	&\quad +\Bigl(\mathcal{Z}_{b,a}^{(q+\lambda )}(a)-\frac{\mathcal{Z}_{b,a}^{(q+\lambda )\prime}(a)}{\mathcal{W}_{b,a}^{(q+\lambda )\prime}(a)}\mathcal{W}_{b,a}^{(q+\lambda )}(a)\Bigr)\frac{\lambda\int_b^ar_a^{(q+\lambda)}(x,y)\dd y}{1-\lambda\int_b^ar_a^{(q+\lambda)}(a,y)\dd y} \mathbf 1_{\{a,b\in \mathbb R_-\}}
								\end{align*}
		\end{itemize}
		\end{proposition}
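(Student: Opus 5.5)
The plan mirrors the proof of Proposition~\ref{ts1}. We start from the closed forms in Propositions~\ref{upreflection} and \ref{downreflection} and let $p\to 0$ term by term. We treat separately the sign configurations $a,b\in\R_+$, $a,b\in\R_-$ and $a\in\R_+,b\in\R_-$.

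The first step records the limits of the integration endpoints. As in the proof of Proposition~\ref{ts1}, the maps $p\mapsto a\wedge bp^{-1}$, $p\mapsto ap^{-1}\vee b$, $p\mapsto b\vee(a\wedge bp^{-1})$ and $p\mapsto a\wedge(ap^{-1}\vee b)$ are continuous on $(0,1]$ and have the limits already listed there. For instance, $a\wedge bp^{-1}\to a$ when $b>0$, and $ap^{-1}\vee b\to b$ when $a<0$.

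The second step applies Lemma~\ref{Thm:ResolventLimitPto0Exists} with kernels $r_b^{(q+\lambda)}$ and $r_a^{(q+\lambda)}$. Its hypotheses hold because these are densities of killed potential measures of reflected processes, so the bound \eqref{Eq:ResolventInequality} holds verbatim. We then obtain the following limits.
\begin{itemize}
\item[(a)] The operators $\mathcal Z^{p}_{b,a}r_b^{(q+\lambda)}$, $\mathcal W^{p}_{b,a}r_a^{(q+\lambda)}$ and $\mathcal Z^{p}_{b,a}r_a^{(q+\lambda)}$ converge to the total-resetting versions \eqref{ap3}, with the kernel replaced accordingly. When both $a,b$ have the same sign the integration interval collapses, and the extra term survives only when $a\in\R_+$, $b\in\R_-$, where $f(0)=Z^{(q+\lambda)}(-b)$ or $W^{(q+\lambda)}(-b)$ appears.
\item[(b)] Integrals of the form $\int\overline{\V}^{(p)}_\lambda[\cdot](x,y)\,\dd y$ over a window that tends to $[b,a]$ converge, by the lemma with $f\equiv1$ and since the window lies on one side of $0$, to the limiting quantity given there.
\end{itemize}
For (b) we need a remark: as $p\to0$ with $b>0$ and $y\in[b,bp^{-1}]$, the point $py$ stays in $[b,a]$ only on a shrinking set. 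So the relevant limit must be computed from the fact that $\overline{\V}^{(p)}_\lambda[r](x,\cdot)$ is dominated by $\sum_k\lambda^k\V_k$ and that each $\V_k$, $k\ge2$, integrates over a vanishing window. This gives $\lambda\int_b^a r(x,y)\,\dd y$, exactly as asserted in step (i) of Proposition~\ref{ts1}, and we simply invoke the same reasoning.

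For the constants $\ell_a^{(p)}(b)$ and $\ell_b^{(q)}(a)$, we pass to the limit in numerator and denominator separately using (a)–(b). For part (i) with $a,b\in\R_+$, the numerator of $\ell_a^{(p)}(b)$ tends to $1$, because the window $[a\wedge bp^{-1},a]$ collapses. The denominator tends to $Z^{(q+\lambda)}(a-b)\bigl(1-\lambda\int_b^a r_b^{(q+\lambda)}(b,z)\,\dd z\bigr)$. Since $\mathcal Z^{(q+\lambda)}_{b,a}(a)=Z^{(q+\lambda)}(a-b)$ in this case, this produces the middle term of (i). For part (ii) with $a,b\in\R_-$, the prefactor $\ell_b^{(q)}(a)$ tends to the bracket $\mathcal Z-\frac{\mathcal Z'}{\mathcal W'}\mathcal W$ evaluated at $a$, divided by $1-\lambda\int_b^a r_a^{(q+\lambda)}(a,y)\,\dd y$. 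This requires that the ratio $\mathcal Z^{p\prime}/\mathcal W^{p\prime}$ at $a$ converges. That follows because, as shown in the proof of Proposition~\ref{downreflection}, $\mathcal W^{p\prime}_{b,a}r_a(a)=W^{(q+\lambda)\prime}(a-b)$ and $\mathcal Z^{p\prime}_{b,a}r_a(a)=Z^{(q+\lambda)\prime}(a-b)$ independently of $p$.

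The main obstacle is justifying the interchange of the limit with the collapsing or escaping integration windows. The windows depend on $p$ and, for $b>0$, extend to $bp^{-1}\to\infty$, while Lemma~\ref{Thm:ResolventLimitPto0Exists} is stated for fixed $[u_1,u_2]$. I would handle this by dominated convergence, extending the kernel by zero outside $[b,a]$ (it vanishes for $y>a$). The uniform geometric bound $(\lambda/(q+\lambda))^k$ from Remark~\ref{rem2} is independent of $p$, so the series and the limit can be interchanged.
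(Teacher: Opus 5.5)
Your proposal is correct and follows essentially the paper's route: let $p\to0$ in the closed forms of Propositions~\ref{upreflection} and \ref{downreflection}, using the limits of the integration endpoints, Lemma~\ref{Thm:ResolventLimitPto0Exists} only for the fixed mixed-sign window $[b,a]$, and the limits of the constants $\ell_a^{(p)}(b)$ and $\ell_b^{(q)}(a)$; the paper's proof is a terser version of exactly this. One caution: for the middle term of (i) your argument actually yields $\lambda\int_b^a r_b^{(q+\lambda)}(x,y)\,\dd y\big/\bigl[Z^{(q+\lambda)}(a-b)\bigl(1-\lambda\int_b^a r_b^{(q+\lambda)}(b,y)\,\dd y\bigr)\bigr]$, which carries a factor $\lambda$ that the printed statement drops (a typo in the statement, as checking $x=b$ against $\lim_{p\to0}\ell_a^{(p)}(b)$ confirms), so you should say this rather than claim you reproduce that term verbatim.
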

	\begin{proof}
	(i)	First note that for $a$, $b\in\mathbb R_+$, we have 
		\[\lim _{p \to 0} \ell_a(b)=\frac{1}{Z^{(q+\lambda)}(a-b)\bigl(1-\lambda\int_{b}^{a}r_b^{(q+\lambda)}(b,y)\dd y\bigr)}.\]
Further for $a$, $b\in\mathbb R$ we have that  $\lim _{p \rightarrow 0} \mathcal{Z}_{b,a}^{\p}r^{(q+\lambda)}(x )  =Z^{(q+\lambda)}(x-b)$ and thus using Lemma \ref{Thm:ResolventLimitPto0Exists} in Proposition \ref{upreflection}, the result follows. Similar treatment holds for part (ii).
		\end{proof}

\noindent Finally, we can also use Lemma \ref{Thm:ResolventLimitPto0Exists} to show that the one-sided PSR-LP identities converge when we let $p \rightarrow 0$. The proof of the result below is similar to Proposition \ref{ts1} and is therefore omitted for brevity. 
\begin{proposition}[Upward and downward one-sided exit problem]
	Let  $q,\lambda > 0$,. Then,  the following limit cases, in terms of the resolvents $\overline{r}^{(q+\lambda)}$ and  $\underline{r}^{(q+\lambda)}$ defined in \textnormal{\textbf{(S1)}}, hold 
	\begin{itemize}
		\item[\upshape{(i)}] For $a,b \in \R_+$, 
		\begin{align}
			\lim\limits_{p \rightarrow 0} \E_x \Bigl( \ee^{-q \tau_{a,U}^+} \1_{\{\tau_{a,U}^+ < \infty\}} \Bigr) &= \ee^{-\Phi_{q+\lambda}(a-x)} + \ee^{-\Phi_{q+\lambda}a} \cdot\frac{\lambda \int^{a}_{-\infty} \overline{r}^{(q+\lambda)}(x,y) \dd y}{1- \lambda \int^{a}_{-\infty} \overline{r}^{(q+\lambda)}(0,y) \dd y}, \quad x \in (-\infty,a], \notag \\
			\lim\limits_{p \rightarrow 0} \E_x \Bigl( \ee^{-q \tau_{b,U}^-} \1_{\{\tau_{b,U}^- < \infty\}} \Bigr) &= {Z}^{(q+\lambda)}(x-b) - \frac{q+\lambda}{\Phi_{q+\lambda}} W^{(q+\lambda)}(x-b) + \lambda \int^\infty_b \underline{r}^{(q+\lambda)}(x,y) \dd y, \quad x \in [b,\infty). \notag
		\end{align}
		
		\item[\upshape{(ii)}] For $a,b \in \R_-$,
		\begin{align}
			\lim\limits_{p \rightarrow 0} \E_x \Bigl( \ee^{-q \tau_{a,U}^+} \1_{\{\tau_{a,U}^+ < \infty\}} \Bigr) &= \ee^{-\Phi_{q+\lambda}(a-x)} + \lambda \int^{a}_{-\infty} \overline{r}^{(q+\lambda)}(x,y) \dd y, \quad x \in (-\infty,a], \notag \\
			\lim\limits_{p \rightarrow 0} \E_x \Bigl( \ee^{-q \tau_{b,U}^-} \1_{\{\tau_{b,U}^- < \infty\}} \Bigr) &= {Z}^{(q+\lambda)}(x-b) - \frac{q+\lambda}{\Phi_{q+\lambda}} W^{(q+\lambda)}(x-b) \notag \\
			&\quad \quad + Z^{(q+\lambda)}(-b) \cdot \frac{ \lambda \int^\infty_b \underline{r}^{(q+\lambda)}(x,y) \dd y}{\lambda \int^\infty_b \underline{r}^{(q+\lambda)}(0,y) \dd y}, \quad x \in [b,\infty). \notag
		\end{align} 
	\end{itemize}
\end{proposition}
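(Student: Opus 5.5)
The plan is to pass to the limit $p\to 0$ in the one-sided identities of Propositions \ref{Thm:Res-OneSidedExitDownwards-PositiveHalfAxis} and \ref{Thm:Res-OneSidedExitUpwards-PositiveHalfAxis}, in the same way Proposition \ref{ts1} handles the two-sided case. Each identity has three pieces: a classical term independent of $p$, a resolvent integral against $h(py)$, and possibly a pure resolvent integral $\int\overline{\V}^{(p)}_\lambda[\cdot](x,y)\dd y$. For each piece I would first track the $p$-dependent limits of integration, $b\vee bp^{-1}$, $bp^{-1}$, $a\wedge ap^{-1}$ and $ap^{-1}$, splitting by the sign of $a$ or $b$.

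\textbf{Limits of integration.} For $a\in\R_+$ we have $a\wedge ap^{-1}=a$, so the upward integral runs over $(-\infty,a]$. Lemma \ref{Thm:ResolventLimitPto0Exists} then applies with $f(y)=\ee^{-\Phi_{q+\lambda}(a-y)}$ and $f(0)=\ee^{-\Phi_{q+\lambda}a}$, which gives exactly the fractional term in (i). For $a\in\R_-$, $ap^{-1}\to-\infty$, so the integral against $\ee^{-\Phi_{q+\lambda}(a-py)}$ runs over $(-\infty,ap^{-1}]$. For the term $\int_{ap^{-1}}^a\overline{\V}^{(p)}_\lambda\dd y$ I would mimic the proof of Lemma \ref{Thm:ResolventLimitPto0Exists}:
\begin{itemize}
\item the $k=1$ contribution tends to $\lambda\int_{-\infty}^a\overline r^{(q+\lambda)}(x,y)\dd y$;
\item the terms $k\ge2$ involve $\overline r^{(q+\lambda)}(pz,\cdot)$ integrated only over $[ap^{-1},a]$ in the last variable, evaluated at points $pz$, and they should vanish because $pz\to0>a$ lies outside the domain of the kernel.
\end{itemize}
This yields (ii), first line. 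The downward case is symmetric: for $b\in\R_+$, $b\vee bp^{-1}=bp^{-1}\to\infty$, so the resolvent integrals against $Z(py-b)$ and $W(py-b)$ vanish, while $\int_b^{bp^{-1}}\overline{\V}^{(p)}_\lambda\dd y\to\lambda\int_b^\infty\underline r^{(q+\lambda)}(x,y)\dd y$. For $b\in\R_-$ the integration region is $[b,\infty)$, and Lemma \ref{Thm:ResolventLimitPto0Exists} gives $f(0)=Z^{(q+\lambda)}(-b)-\frac{q+\lambda}{\Phi_{q+\lambda}}W^{(q+\lambda)}(-b)$ times the geometric ratio.

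\textbf{Justifying the interchanges.} The uniform bound $(\lambda/(q+\lambda))^k$ from Remark \ref{rem2} is independent of $p$. This permits dominated convergence termwise and the interchange of $\lim_p$ with $\sum_k$. When the integration window itself moves with $p$, I would extend the integrand by the indicator of the window and apply dominated convergence on the fixed unbounded set, using the bound \eqref{Eq:ResolventInequality}.

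\textbf{Main obstacle.} The hardest step is the unbounded-domain case, where $f$ is not uniformly continuous or the window escapes to infinity. There one must check the continuity at $0$ used in \eqref{Eq:Recursive-p-Limit}, namely that $z\mapsto\int\V_{k-1}(pz,y)\dd y$ is continuous, via the strong Feller property of Proposition \ref{Prop:BertoinProp}. One must also reconcile the stated forms: the denominator in (ii) for the downward case appears without the "$1-$", and I would check that against the geometric sum, expecting it to read $1-\lambda\int\underline r(0,y)\dd y$.
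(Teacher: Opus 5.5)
Your route is the paper's own. The paper omits this proof as being similar to Proposition \ref{ts1}: one lets $p\to0$ in Propositions \ref{Thm:Res-OneSidedExitDownwards-PositiveHalfAxis} and \ref{Thm:Res-OneSidedExitUpwards-PositiveHalfAxis}, tracks the moving limits of integration, and uses Lemma \ref{Thm:ResolventLimitPto0Exists} where the window is fixed. Done as you describe, this gives the stated right-hand sides for both identities in (i) and for the upward identity in (ii).

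The downward identity in (ii) is where the proposal breaks down, and the problem is not only the missing ``$1-$''. You apply the lemma to $f(y)=Z^{(q+\lambda)}(y-b)-\frac{q+\lambda}{\Phi_{q+\lambda}}W^{(q+\lambda)}(y-b)$, so your argument yields
\[
Z^{(q+\lambda)}(x-b)-\frac{q+\lambda}{\Phi_{q+\lambda}}W^{(q+\lambda)}(x-b)+\Bigl(Z^{(q+\lambda)}(-b)-\frac{q+\lambda}{\Phi_{q+\lambda}}W^{(q+\lambda)}(-b)\Bigr)\frac{\lambda\int_b^\infty\underline{r}^{(q+\lambda)}(x,y)\,\dd y}{1-\lambda\int_b^\infty\underline{r}^{(q+\lambda)}(0,y)\,\dd y}.
\]
Its prefactor is $f(0)$, not the stated $Z^{(q+\lambda)}(-b)$. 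The total-resetting renewal equation $h(x)=f(x)+\lambda\int_b^\infty\underline{r}^{(q+\lambda)}(x,y)\,\dd y\;h(0)$ gives the same expression. The formula as printed is in fact false: for $b<0$ and $x=0$ it equals $f(0)+Z^{(q+\lambda)}(-b)\geq Z^{(q+\lambda)}(-b)>1$. The left-hand side, however, is a limit of Laplace transforms bounded by $1$. So rather than ``reconciling'' the statement, say explicitly that your argument proves the corrected identity above, with both the $W^{(q+\lambda)}(-b)$ term and the $1-$ restored.

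There is also a smaller gap in the downward case of (i). You assert that the resolvent integrals against $Z^{(q+\lambda)}(py-b)$ and $W^{(q+\lambda)}(py-b)$ over $[bp^{-1},\infty)$ vanish separately, appealing to the $p$-uniform bound of Remark \ref{rem2}. That bound, like Lemma \ref{Thm:ResolventLimitPto0Exists}, requires a bounded $h$, and these functions are unbounded. For $y>x$ one has $\underline{r}^{(q+\lambda)}(x,y)=\ee^{-\Phi_{q+\lambda}(y-b)}W^{(q+\lambda)}(x-b)$, while $W^{(q+\lambda)}(z)\sim\ee^{\Phi_{q+\lambda}z}/\psi'(\Phi_{q+\lambda})$. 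So the integrals are finite only because of the contraction $p<1$. Do what you already do in (ii): keep $f$, which takes values in $[0,1]$, as a single function. The whole term is then at most
\[
\int_{bp^{-1}}^\infty\overline{\V}^{(p)}_\lambda[\underline{r}^{(q+\lambda)}](x,y)\,\dd y\leq\frac{q+\lambda}{q}\,\lambda\int_{bp^{-1}}^\infty\underline{r}^{(q+\lambda)}(x,w)\,\dd w\longrightarrow0.
\]
The same estimate, with the outer variable running over $(-\infty,ap^{-1}]$, is the clean way to dispose of the $k\geq2$ terms in the upward case of (ii).

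Finally, if you intend to re-prove Lemma \ref{Thm:ResolventLimitPto0Exists} rather than cite it, the step \eqref{Eq:Recursive-p-Limit} needs continuity at $0$ uniformly in $p$, because the function evaluated at $pz$ itself depends on $p$. Strong Feller continuity at each fixed $p$ is not enough. The uniform version follows from $\int|\f^{(q+\lambda)}(w,u)-\f^{(q+\lambda)}(0,u)|\,\dd u\to0$ as $w\to0$ (Scheff\'e's lemma), together with the geometric bound.
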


	\subsection{More explicit partial resetting scale functions for  $a,b \in \R_+$} \label{Subsec1:ExitUpwardsInPositiveRealAxis}
	In this subsection, following Remark \ref{ap4}, we consider the case that $a,b \in \R_+$, in which the use of classical scale functions $W^{(q)}$ and $Z^{(q)}$ and their properties  can satisfy the the conditions in Eq.~\eqref{Eq:ConvergenceOfIntegralOperatorForalln} of Theorem \ref{Thm:Res-UAuxiliaryFuncIsFixedPoint}.  The overall goal of this section is thus to derive an analogous theory to that from Sections \ref{sec3.1} and \ref{Subsubsec:Res-ExitUpwardsPostiveHalf} using the classical scale functions rather than the potential densities used in the previous sections.  This will also lead to some simplified expression for exit identities   of Theorems \ref{Thm:Res-TwoSidedExitUpwards-PositiveHalfAxis} and \ref{Thm:Res-TwoSidedExitDownwards-PositiveHalfAxis}. The proofs of the following results are done in exactly the same manner as those in Sections \ref{sec3.1} and \ref{Subsubsec:Res-ExitUpwardsPostiveHalf}, and we will therefore only provide proofs for parts that have non-obvious differences.  
	For the remainder of these sections, we shall assume that $a,b \in \R_+$ and hence have that $C_{[0,a]}$ which was defined previously as the space of continuous functions with domain $[0,a]$.
	
Recall that $W^{(q)}$ the scale function of  a SNLP. We  define for $n \geq 0$ and $q \geq 0$ the associate  functions 
	\[w^{(q,p)}_n (x) = p^n W^{(q)}(p^n x),\]
	and  the convolution of these functions as
	\begin{equation*}
		[\circledast^n_{i=0} w_i^{(q,p)}](x) := [w^{(q,p)}_0 \circledast \dots \circledast w^{(q,p)}_n](x) = \int^x_0 p^n W^{(q)}(p^n(x-y)) 	[\circledast^{n-1}_{i=0} w_i^{(q,p)}](y) \dd y , \quad x \geq 0, \label{Eq:ConvolutionWScaleFunc}
	\end{equation*}
	for which $\circledast$ denotes the standard convolution operator, and where for $n < 0$ we let $[\circledast^n_{i=0} w_i^{(q,p)}](x) = \delta_0(x)$, the standard Dirac-delta function. Furthermore, for $p \in (0,1)$, $q$, $x$, $a$, $u \geq 0$ with $a \geq x$
	and $z,\gamma \in \R$, define for $h \in C_{[0,a]}$ the following operator 
	\begin{align*}
		\mathcal{G}_\gamma^{(q,p)}h(x;u,z) = \sum^{\infty}_{k=0} \gamma^k \int^{x-up^{-k}}_0 h(p^k (x-y)-z)\; \bigl[ \circledast^{k-1}_{i=0} w_i^{(q,p)} \bigr](y) \dd y, \label{Eq:DefnOfGIntegralOperator} 
	\end{align*}
	for which we use the convention that $\mathcal{G}_\gamma ^{(q,\gamma)}h(x;u):=\mathcal{G}_\gamma^{(q,\gamma)}h(x;u,u)$ (it will be seen in the upcoming that we shall usually have the parameters $u=z$). 
	
	We shall now show that $\mathcal{G}_\infty^{(q,\gamma)}h$ is indeed well-defined and convergent, and that it is the unique fixed point of a specified class of integral operators. 
	
	\begin{theorem}\label{Thm:UAuxiliaryFuncIsFixedPoint}
		Let $p \in (0,1)$, $q,x,a,u \geq 0$ with $a \geq x$ and $z,\gamma \in \R$. Furthermore, for $f \in  C_{[0,a]}$, consider the integral operator 
		\begin{equation}
			\mathcal{A}f(x) = h(x-z) + \frac{\gamma}{p} \int^{xp}_{u} W^{(q)}(x - yp^{-1}) f(y) \dd y,  \label{Eq:MainIntegralOperator}
		\end{equation}
		for a chosen $h \in  C_{[0,a]}$, and for which we shall denote $\mathcal{A}^0f(x) = f(x)$ and  $\mathcal{A}^{n+1}f(x) = \mathcal{A}[\mathcal{A}^{n}f(x)]$ for $n \geq 0$.  Then, the unique fixed point of $\mathcal{A}$ is $\mathcal{G}^{(q,p)}_\gamma h(x;u,z)$. 
		%
	\end{theorem}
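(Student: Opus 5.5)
The plan is to treat $\mathcal{A}$ as a Volterra-type operator. Unlike in Theorem \ref{Thm:Res-UAuxiliaryFuncIsFixedPoint}, we do not have a global bound of the form $|\gamma|/q<1$ here, so a one-step contraction is not available. Instead I would show that the iterates $\mathcal{A}^n$ satisfy factorial bounds. This makes some power $\mathcal{A}^N$ a contraction on $(C_{[0,a]},\|\cdot\|)$ for every $\gamma\in\R$, which gives existence and uniqueness of the fixed point via the Weissinger/Banach argument. It also gives $\lim_n \mathcal{A}^n f = h(\cdot - z)+\sum_{k\ge1}\gamma^k\mathcal{V}^k h(\cdot - z)$, exactly as in Eq.~\eqref{Eq:Res-FinalSolutionThm1}. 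Here
\[
\mathcal{V}f(x):=\frac1p\int_u^{xp}W^{(q)}(x-yp^{-1})f(y)\,\dd y=\int_{up^{-1}}^{x}W^{(q)}(x-s)f(ps)\,\dd s .
\]
The second form comes from the substitution $y=ps$. Throughout I use the convention that integrals over empty ranges vanish.

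The core step is to identify $\mathcal{V}^k$ with the $k$-th term of $\mathcal{G}^{(q,p)}_\gamma$, namely
\[
\mathcal{V}^k h(\cdot-z)(x)=\int_0^{x-up^{-k}} h\bigl(p^k(x-y)-z\bigr)\,\bigl[\circledast_{i=0}^{k-1}w_i^{(q,p)}\bigr](y)\,\dd y ,
\]
which I prove by induction on $k$. For $k=1$, the substitution $s=x-y$ in the second form of $\mathcal{V}$ gives exactly $\int_0^{x-u/p}W^{(q)}(y)h(p(x-y)-z)\dd y$, with $w^{(q,p)}_0=W^{(q)}$.

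For the inductive step, write $C_k:=\circledast_{i=0}^{k-1}w_i^{(q,p)}$ and compute $\mathcal{V}(\mathcal{V}^k h)(x)=\int_{u/p}^xW^{(q)}(x-s)\,\mathcal{V}^kh(ps)\,\dd s$. Inside $\mathcal{V}^kh(ps)$ I substitute $y'=pv$. This produces $\int_0^{s-up^{-k-1}}h(p^{k+1}(s-v)-z)\,pC_k(pv)\,\dd v$.

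Two facts then finish the step. The first is the scaling identity $p\,(f\circledast g)(p\,\cdot)=(pf(p\,\cdot))\circledast(pg(p\,\cdot))$, which turns $pC_k(p\,\cdot)$ into $w_1\circledast\cdots\circledast w_k$. The second is Fubini: after the change of variables $t=x-s$, $v\mapsto$ the total lag, the outer $W^{(q)}$ convolves with this to give $C_{k+1}$, by commutativity and associativity of $\circledast$. I must also check that the integration region transforms correctly, i.e.\ that the constraints $s\ge u/p$ and $v\le s-up^{-k-1}$ combine to the single upper limit $x-up^{-(k+1)}$ for the total lag variable.

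For convergence, since $W^{(q)}$ is nondecreasing and $p\le1$, we have $0\le w_i^{(q,p)}(y)\le W^{(q)}(a)$ on $[0,a]$. Induction then gives $C_k(y)\le W^{(q)}(a)^k y^{k-1}/(k-1)!$. The same estimate, applied to differences, gives $\|\mathcal{A}^nf_1-\mathcal{A}^nf_2\|\le (|\gamma|W^{(q)}(a)a)^n/n!\,\|f_1-f_2\|$. So the series defining $\mathcal{G}^{(q,p)}_\gamma h$ converges absolutely and uniformly on $[0,a]$, and $\gamma^n\mathcal{V}^nf\to0$. Continuity of each term, and hence that $\mathcal{A}$ maps $C_{[0,a]}$ to itself, follows from dominated convergence, using continuity of $h$ and local boundedness of $W^{(q)}$; a possible jump of $W^{(q)}$ at $0$ in the bounded variation case is harmless inside the integral.

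I expect the main obstacle to be the bookkeeping in the inductive step. The nested lower limits $up^{-k}$ must collapse to the single upper limit $x-up^{-(k+1)}$, and the rescaled convolution has to be shown to coincide with $\circledast_{i=0}^{k}w_i^{(q,p)}$. I would do this with an explicit Fubini computation on the simplex rather than appealing to Laplace transforms, because the variable limits make transforms awkward. A minor additional point is making sure that $h$ is only evaluated where it is defined; I would adopt the convention that $h$ is extended by its boundary values, or note that $p^k(x-y)-z\in[0,a]$ on the range of integration when $z=u$.
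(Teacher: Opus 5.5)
Your proposal is correct, but it takes a different route from the paper in both of its steps.

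\textbf{Convergence and uniqueness.} The paper only says the argument is ``similar to Theorem~\ref{Thm:Res-UAuxiliaryFuncIsFixedPoint}''. That theorem's one-step contraction rested on $|\gamma|<q$ together with the potential-density bound $\int r^{(q)}\le 1/q$, and neither is available for the kernel $W^{(q)}$. Here the natural one-step Lipschitz bound is $|\gamma|\int_0^{a}W^{(q)}(y)\,\dd y$, which need not be below $1$. Your Volterra estimate $\bigl[\circledast_{i=0}^{k-1}w_i^{(q,p)}\bigr](y)\le W^{(q)}(a)^k y^{k-1}/(k-1)!$ gives $\|\mathcal{A}^n f_1-\mathcal{A}^n f_2\|\le (|\gamma|aW^{(q)}(a))^n\|f_1-f_2\|/n!$. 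This supplies exactly what the paper leaves out, and it works for every $\gamma\in\R$.

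\textbf{Identifying the $k$-th term.} The paper uses incomplete Laplace transforms, ${}_u\widehat{\mathtt{I}}_n(\vartheta)=\psi_q(\vartheta)^{-1}\,{}_u\widehat{\mathtt{I}}_{n-1}(\vartheta p^{-1})$. It then recognises $\prod_{i=0}^{n-1}\psi_q(\vartheta p^{-i})^{-1}$ as the transform of $\circledast_{i=0}^{n-1}w_i^{(q,p)}$ and inverts. That route is slicker and shows why the rescaled convolutions appear. However, it tacitly needs $h$ to be defined with controlled growth on all of $[u,\infty)$, plus an appeal to uniqueness of Laplace inversion. Your induction uses $p\,(f\circledast g)(p\,\cdot)=(pf(p\,\cdot))\circledast(pg(p\,\cdot))$ and Fubini, and stays on $[0,a]$.

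The region bookkeeping you flagged does go through. With $t=x-s$ and total lag $y=t+v$, the constraints become $0\le t\le y\le x-up^{-(k+1)}$. The remaining condition $t\le x-u/p$ is automatic, because $u\ge 0$ gives $up^{-(k+1)}\ge up^{-1}$. The paper's swap of integration order in its transform computation implicitly relies on the same hypothesis $u\ge 0$.
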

	
	\begin{proof}
	Let 
		\begin{equation}
			\mathtt{I}_0^{(q)}(x;u,z) := h(x-z), \quad 	\mathtt{I}_n^{(q)}(x;u,z) := \int^{xp}_{u} W^{(q)}(x - yp^{-1}) \mathtt{I}^{(q)}_{n-1}(y;u,z) \dd y,  
		\end{equation} 
		and 
			\begin{align}
			g_n(x) = \mathcal{A}g_{n-1}(x) =  \sum^{n-1}_{k=0} \Bigl(\frac{\gamma}{p} \Bigr)^k \mathtt{I}_k^{(q)}(x;u,z), \quad  n \geq 1. \notag 
		\end{align}
		Similar to Theorem \ref{Thm:Res-UAuxiliaryFuncIsFixedPoint}, one can show that  $g_n(x) =  \mathcal{A}g_{n-1}(x) = \mathcal{A}^{n-1}g_{0}(x)$ converges as $n \rightarrow \infty$  and 
		\begin{align}
			g(x) = \lim\limits_{n\rightarrow \infty}\mathcal{A}g_{n}(x) =   \sum^{\infty}_{k=0} \Bigl(\frac{\gamma}{p} \Bigr)^k \mathtt{I}_k^{(q)}(x;u,z), \label{Eq:FinalSolutionThm1} 
		\end{align}
		is the unique fixed point of $\mathcal{A}$. 
		
	\noindent 	Next, we show that the Laplace transform of $\mathtt{I}_n^{(q)}$ that it can be written in terms of $\bigl[ \circledast^{n-1}_{i=0} w_i^{(q,p)} \bigr](x)$ thus yielding the form of $\mathcal{G}_\gamma^{(q,p)}h$. Hence, denote by $_u\widehat{f}(\vartheta) := \int_{u}^{\infty} \ee^{-\vartheta x} f(x) \dd x$ the (incomplete) Laplace transform of an arbitrary measurable function $f:\R \rightarrow [0,\infty)$. Then, observe for $n \geq 1$ that 
		\begin{align}
			_u\widehat{\mathtt{I}}_n^{(q)}(\vartheta) &= \int^\infty_{u} \ee^{-\vartheta x} \Bigl( \int^{xp}_{u} W^{(q)}(x - yp^{-1}) \mathcal{I}^{(q)}_{n-1}(y;u,z) \dd y \Bigr) \dd x \notag \\ 
			&= \frac{1}{\psi_{q}(\vartheta)} \; _u\widehat{\mathtt{I}}^{(q)}_{n-1}(\vartheta p^{-1}) \notag \\ 
			&=  \Bigl(\prod^{n-1}_{k=0} \frac{1}{\psi_q(\vartheta p^{-k})} \Bigr) \;  _u\widehat{\mathtt{I}}^{(q)}_{0}(\vartheta p^{-n}), \notag 
		\end{align} 
		for which we have that
		\begin{align}
			_u\widehat{\mathtt{I}}^{(q)}_{0}(\vartheta p^{-n}) &= \int^\infty_{u} \ee^{-\vartheta p^{-n} x} h(x-z) \dd x	=p^n \ee^{-\vartheta up^{-n}}  \int^\infty_{0} \ee^{-\vartheta x} h(p^n x +u - z) \dd x. \notag 
		\end{align}
		Furthermore, for $n \geq 1$, we have by using Eq.~\eqref{Eq:ConvolutionWScaleFunc}  that 
		\begin{align}
			\prod^{n-1}_{i=0} \frac{1}{\psi_{q}(\vartheta p^{-i})}
			&= \int^\infty_0 \ee^{-\vartheta x} \; \bigl[ w_0^{(q,p)} \circledast \cdots \circledast w_{n-1}^{(q,p)} \bigr](x) \dd x \notag \\
			&= \int^\infty_0 \ee^{-\vartheta x} \; \bigl[ \circledast^{n-1}_{i=0} w_i^{(q,p)} \bigr](x) \dd x. \label{Eq:LTofConvolutionofwScaleFuncs}
		\end{align}
		Thus, by using the previous two equations, $_u\widehat{\mathtt{I}}_n^{(q)}(\vartheta)$ becomes 
		\begin{align}
			_u\widehat{\mathtt{I}}_n^{(q)}(\vartheta) &=  p^n \ee^{-\vartheta up^{-n}}  \Bigl(\int^\infty_{0} \ee^{-\vartheta x} h(p^nx +u - z) \dd x \Bigr) \cdot \Bigl( \int^\infty_0 \ee^{-\vartheta y} \; \bigl[ \circledast^{n-1}_{i=0} w_i^{(q,p)} \bigr](y) \dd y \Bigr) \notag \\
			&=   \int^\infty_{u} \ee^{-\vartheta x} \Bigl( p^n  \int^{x-up^{-n}}_0 h(p^n(x-y) - z) \; \bigl[ \circledast^{n-1}_{i=0} w_i^{(q,p)} \bigr](y) \dd y \Bigr) \dd x, \notag 
		\end{align}
		where the final step follows by using that $x - up^{-n} < 0$ for $x \in [u,up^{-n})$ and that, for $y<0$,  $\bigl[ \circledast^{n-1}_{i=0} w_i^{(q,p)} \bigr](y) = 0$. Hence, the inverse Laplace transform of the above equation yields for $n \geq 1$ that 
		\begin{equation}
			\mathtt{I}_n^{(q)}(x;u,z) = p^n   \int^{x-up^{-n}}_0 h(p^n(x-y) - z) \; \bigl[ \circledast^{n-1}_{i=0} w_i^{(q,p)} \bigr](y) \dd y, \notag 
		\end{equation}
		for which the above holds also for $n=0$ by recalling that $\bigl[ \circledast^{n-1}_{i=0} w_i^{(q,p)} \bigr](y) = \delta_0(y)$ when $n<1$. The proof is hence completed by substituting the above equation into Eq.~\eqref{Eq:FinalSolutionThm1}. 
	\end{proof}
	
	
\noindent 	For $a$, $b\in\mathbb R_+$, we define the scale functions operators 
$	\mathcal{W}^{(q+\lambda,p)}_\gamma(x;u) :=  \mathcal{G}^{(q+\lambda,p)}_\gamma W^{(q+\lambda)}(x;u)$ and $	\mathcal{Z}^{(q+\lambda,p)}_\gamma(x;u) :=  \mathcal{G}^{(q+\lambda,p)}_\gamma Z^{(q+\lambda)}(x;u)$, given by 
	\begin{align*}\begin{split}	\mathcal{G}_\gamma^{(q,p)}W^{(q)}(x;u) = \sum^{\infty}_{k=0} \gamma^k \int^{x-up^{-k}}_0 W^{(q)}(p^k (x-y)-u)\; \bigl[ \circledast^{k-1}_{i=0} w_i^{(q,p)} \bigr](y) \dd y, \\
\mathcal{G}_\gamma^{(q,p)}Z^{(q)}(x;u) = \sum^{\infty}_{k=0} \gamma^k \int^{x-up^{-k}}_0 Z^{(q)}(p^k (x-y)-u)\; \bigl[ \circledast^{k-1}_{i=0} w_i^{(q,p)} \bigr](y) \dd y,	
 \end{split}\end{align*}
For $a$, $b \in \R_+$	Using  Theorem \ref{Thm:UAuxiliaryFuncIsFixedPoint} and the similar  arguments as in  Theorem \ref{Thm:Res-TwoSidedExitUpwards-PositiveHalfAxis}  and \ref{Thm:Res-TwoSidedExitDownwards-PositiveHalfAxis}, we have the following results.
	
	\begin{theorem}[Upward two-sided exit problem]\label{Thm:TwoSidedExitUpwards-PositiveHalfAxis}
		Let $a,b \in \R_+$,  $x \in [b,a]$ and  $p \in (0,1)$. Then, for $q,\lambda \geq 0$ and $a,b \in \R_+$, it holds 
		\begin{equation}
			\E_x \Bigl( \ee^{-q \tau_{a,U}^+} \1_{\{\tau_{a,U}^+ < \tau_{b,U}^-\}} \Bigr) = \frac{\mathcal{W}_{-\lambda}^{(q+\lambda,p)}(x;b)}{\mathcal{W}_{-\lambda}^{(q+\lambda,p)}(a;b)}. 
		\end{equation}
		\end{theorem}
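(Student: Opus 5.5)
The plan is to start from the renewal-type equation \eqref{Eq:PSR-ExitUpwardsMainRenewal} obtained in the proof of Theorem \ref{Thm:Res-TwoSidedExitUpwards-PositiveHalfAxis}. For $a,b\in\R_+$ its reduced form \eqref{Eq:Res-ResolventIntegralEqFinal} reads
\[
g_{a,b}^{(q)}(x)=\frac{W^{(q+\lambda)}(x-b)}{W^{(q+\lambda)}(a-b)}+\lambda\int_{a\wedge bp^{-1}}^{a} r^{(q+\lambda)}(x,y)\,g_{a,b}^{(q)}(py)\,\dd y ,
\]
with $g_{a,b}^{(q)}(py)=0$ whenever $py<b$. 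Instead of working with the kernel $r^{(q+\lambda)}$, I will rewrite this as an equation of the type \eqref{Eq:MainIntegralOperator}. To do so, insert $r^{(q+\lambda)}(x,y)=\frac{W^{(q+\lambda)}(x-b)}{W^{(q+\lambda)}(a-b)}W^{(q+\lambda)}(a-y)-W^{(q+\lambda)}(x-y)$ and split the integral into two terms. The first term is proportional to $W^{(q+\lambda)}(x-b)$, with a constant
\[
C:=\frac{\lambda}{W^{(q+\lambda)}(a-b)}\int W^{(q+\lambda)}(a-y)\,g_{a,b}^{(q)}(py)\,\dd y .
\]
In the second term, substitute $y'=py$ and use $W^{(q+\lambda)}(x-y'p^{-1})=0$ for $y'>xp$. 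This gives
\[
g_{a,b}^{(q)}(x)=\kappa\,W^{(q+\lambda)}(x-b)-\frac{\lambda}{p}\int_{b}^{xp}W^{(q+\lambda)}(x-y'p^{-1})\,g_{a,b}^{(q)}(y')\,\dd y' ,
\]
where $\kappa=W^{(q+\lambda)}(a-b)^{-1}+C$ is a constant not depending on $x$. The lower limit can be taken as $b$ because $g_{a,b}^{(q)}$ vanishes below $b$.

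This is exactly the operator of Theorem \ref{Thm:UAuxiliaryFuncIsFixedPoint}, with $\gamma=-\lambda$, $u=z=b$, $h=\kappa W^{(q+\lambda)}$ and $q$ replaced by $q+\lambda$. By linearity of $\mathcal{G}$ in $h$, the unique fixed point is $g_{a,b}^{(q)}(x)=\kappa\,\mathcal{W}^{(q+\lambda,p)}_{-\lambda}(x;b)$. The main step is therefore to check that Theorem \ref{Thm:UAuxiliaryFuncIsFixedPoint} applies on $C_{[0,a]}$, i.e. that the iteration converges for $\gamma=-\lambda$, and that $g_{a,b}^{(q)}$ lies in this space. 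For the latter, I will use boundedness and the continuity argument via Proposition \ref{Prop:BertoinProp}, as in the proof of Theorem \ref{Thm:Res-TwoSidedExitUpwards-PositiveHalfAxis}. For the former, the convergence need not come from a contraction with ratio below one. Instead I would bound the $k$-fold iterate directly: the kernel $W^{(q+\lambda)}$ is bounded by $W^{(q+\lambda)}(a)$ on $[0,a]$, and the $k$-fold Volterra-type iterate over a simplex gives a factor $\bigl(\lambda W^{(q+\lambda)}(a)a/p\bigr)^k/k!$. This factorial bound yields convergence of the series for every $\lambda$, and hence uniqueness of the fixed point.

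Finally I would determine $\kappa$ by evaluating at $x=a$, using the boundary condition $g_{a,b}^{(q)}(a)=1$. This holds because starting at $a$ the process exits upward immediately, which is consistent with $r^{(q+\lambda)}(a,\cdot)=0$ in the original equation. It gives $\kappa=1/\mathcal{W}^{(q+\lambda,p)}_{-\lambda}(a;b)$, and hence the stated ratio. I must also check that $\mathcal{W}^{(q+\lambda,p)}_{-\lambda}(a;b)\neq0$. This follows from the same identity: $\kappa\,\mathcal{W}^{(q+\lambda,p)}_{-\lambda}(a;b)=1$ forces the product to be nonzero. The cases $q=0$ or $\lambda=0$ are handled by continuity, or are trivial when $\lambda=0$, since then the series reduces to $W^{(q)}(x-b)$. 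The step I expect to be most delicate is the reformulation itself. It requires checking that the constant $C$ is finite and that the substitution correctly produces the limits $[b,xp]$ when $bp^{-1}>a$ as well as when $bp^{-1}\le a$. In the first case the integral is empty and the identity reduces to the classical one.
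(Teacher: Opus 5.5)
Your proposal is correct and follows the route the paper only indicates: you take the renewal equation \eqref{Eq:PSR-ExitUpwardsMainRenewal}, split $r^{(q+\lambda)}$ and substitute $y'=py$ to reach the form \eqref{Eq:MainIntegralOperator} with $\gamma=-\lambda$, $u=z=b$, $h=\kappa W^{(q+\lambda)}$, and then normalise by $g_{a,b}^{(q)}(a)=1$. Your factorial (Volterra) bound on the iterates is what makes this work for every $\lambda$, since the contraction estimate the paper alludes to would need $\lambda\int_0^{a-bp^{-1}}W^{(q+\lambda)}(s)\,\dd s<1$. The same bound gives uniqueness among bounded measurable functions on $[b,a]$, so you do not need $g_{a,b}^{(q)}\in C_{[0,a]}$, which fails in the bounded-variation case because $g_{a,b}^{(q)}$ then jumps at $b$.
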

	
	
	%
	\begin{theorem}[Downward two-sided exit problem]\label{Thm:TwoSidedExitDownwards-PositiveHalfAxis}
		Let $a,b \in \R_+$,  $x \in [b,a]$ and  $p \in (0,1)$. Then, for $q,\lambda \geq 0$ and $a,b \in \R_+$, it holds 
		\begin{align}
			\E_x \bigl( \ee^{-q \tau_{b,U}^-} \1_{\{\tau_{b,U}^- < \tau_{a,U}^+\}} \bigr) 	&=  \mathcal{Z}_{-\lambda}^{(q+\lambda,p)}(x;b) - \lambda \int^{bp^{-1}}_{b} \mathcal{W}_{-\lambda}^{(q+\lambda,p)}(x;b,u) \dd u   \notag \\
			&\quad -  \frac{\mathcal{W}_{-\lambda}^{(q+\lambda,p)}(x;b)}{\mathcal{W}_{-\lambda}^{(q+\lambda,p)}(a;b)} \Bigl( \mathcal{Z}_{-\lambda}^{(q+\lambda,p)}(a;b) - \lambda \int^{bp^{-1}}_{b} \mathcal{W}_{-\lambda}^{(q+\lambda,p)}(a;b,u) \dd u  \Bigr) , \label{Eq:ExitDownwardsParticularZIdentity}
		\end{align}
	\end{theorem}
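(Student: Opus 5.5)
We follow the downward argument of Theorem \ref{Thm:Res-TwoSidedExitDownwards-PositiveHalfAxis}, but now invert the integral equation with the scale-function fixed point of Theorem \ref{Thm:UAuxiliaryFuncIsFixedPoint} instead of the potential-density resolvent. Set $h_{a,b}^{(q)}(x):=\E_x\bigl(\ee^{-q\tau_{b,U}^-}\1_{\{\tau_{b,U}^-<\tau_{a,U}^+\}}\bigr)$.

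\textbf{Step 1: renewal equation.} Condition on the first Poisson epoch. As in the derivation of Eq.~\eqref{Eq:PSR-ExitDownwardsMainRenewal} we obtain
\[
h_{a,b}^{(q)}(x)=Z^{(q+\lambda)}(x-b)-\frac{W^{(q+\lambda)}(x-b)}{W^{(q+\lambda)}(a-b)}Z^{(q+\lambda)}(a-b)+\lambda\int_b^a r^{(q+\lambda)}(x,y)h_{a,b}^{(q)}(py)\,\dd y .
\]
Since $a,b\in\R_+$, we have $h_{a,b}^{(q)}(py)=1$ whenever $py<b$. This splits the integral into the part over $[b,a\wedge bp^{-1}]$, where $h_{a,b}^{(q)}(py)=1$, and a genuine unknown part.

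\textbf{Step 2: recast as a Volterra equation in $W^{(q+\lambda)}$.} Rather than work with $r^{(q+\lambda)}$, look for an auxiliary function $F(x)$ on $[b,\infty)$ satisfying
\[
F(x)=Z^{(q+\lambda)}(x-b)-\lambda\int_b^{bp^{-1}}W^{(q+\lambda)}(x-u)\,\dd u-\lambda\int_b^x W^{(q+\lambda)}(x-y)F(py)\1_{\{py\ge b\}}\,\dd y ,
\]
where the middle term comes from the region on which $h\equiv1$. Substituting $y'=py$ puts the last integral in the form $\frac{-\lambda}{p}\int_b^{xp}W^{(q+\lambda)}(x-y'p^{-1})F(y')\,\dd y'$. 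This is exactly the operator \eqref{Eq:MainIntegralOperator} with $\gamma=-\lambda$, $u=b$.

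\textbf{Step 3: solve and identify.} The equation is linear in its forcing term. Theorem \ref{Thm:UAuxiliaryFuncIsFixedPoint} therefore gives the fixed point as the sum of two pieces. Forcing with $Z^{(q+\lambda)}(\cdot-b)$ gives $\mathcal{Z}_{-\lambda}^{(q+\lambda,p)}(x;b)$. Forcing with $W^{(q+\lambda)}(\cdot-u)$, integrated over $u$ by Fubini, gives $\lambda\int_b^{bp^{-1}}\mathcal{W}_{-\lambda}^{(q+\lambda,p)}(x;b,u)\,\dd u$, using the $z$-argument of $\mathcal{G}$. The same construction with forcing $W^{(q+\lambda)}(\cdot-b)$ gives $\mathcal{W}_{-\lambda}^{(q+\lambda,p)}(x;b)$; this is the content of Theorem \ref{Thm:TwoSidedExitUpwards-PositiveHalfAxis}.

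For any constant $C$, the function $F-C\,\mathcal{W}_{-\lambda}^{(q+\lambda,p)}(\cdot;b)$ satisfies the homogeneous-killed structure. Standard Volterra reasoning then shows it solves the original equation of Step 1 once the killing at $a$ is imposed through the value at $x=a$. Because $r^{(q+\lambda)}(a,\cdot)=0$, the function $h$ must take the value $0$ at $a$. Choosing $C=F(a)/\mathcal{W}_{-\lambda}^{(q+\lambda,p)}(a;b)$ enforces this and yields \eqref{Eq:ExitDownwardsParticularZIdentity}. Uniqueness of bounded continuous solutions, from the contraction argument of Theorem \ref{Thm:Res-UAuxiliaryFuncIsFixedPoint}, identifies this with $h_{a,b}^{(q)}$.

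\textbf{Main obstacle.} The delicate step is the reduction from the kernel $r^{(q+\lambda)}$ to $W^{(q+\lambda)}$ in Step 2. We must check that the $W^{(q+\lambda)}(x-b)W^{(q+\lambda)}(a-y)/W^{(q+\lambda)}(a-b)$ part of $r$ contributes only a multiple of the $\mathcal{W}$-solution. This holds because that part is proportional in $x$ to $W^{(q+\lambda)}(x-b)$, which is precisely the forcing of the $\mathcal{W}$-equation. We must also verify convergence of the $\mathcal{G}$-series with $\gamma=-\lambda$ on bounded intervals, which I would do via the Laplace-transform bound $\prod_{i}|\psi_{q+\lambda}(\vartheta p^{-i})|^{-1}$.
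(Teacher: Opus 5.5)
Your proposal is correct and follows the route the paper only gestures at (the paper just cites Theorem \ref{Thm:UAuxiliaryFuncIsFixedPoint} and ``similar arguments''). You also supply the step it omits: split $r^{(q+\lambda)}$ into the part proportional to $W^{(q+\lambda)}(x-b)$, absorbed as a free multiple of $\mathcal{W}_{-\lambda}^{(q+\lambda,p)}(\cdot;b)$, and the Volterra part $-W^{(q+\lambda)}(x-y)$, to which Theorem \ref{Thm:UAuxiliaryFuncIsFixedPoint} applies with $\gamma=-\lambda$, $u=b$, and then fix the constant by $h(a)=0$. The only tacit point is $\mathcal{W}_{-\lambda}^{(q+\lambda,p)}(a;b)\neq0$, which follows by running the same decomposition on the upward-exit function $g$, giving $g=c\,\mathcal{W}_{-\lambda}^{(q+\lambda,p)}(\cdot;b)$ with $g(a)=1$; arguing this way directly on the true $h$ also spares you the $r$-kernel contraction step, which as stated needs $q>0$ while the theorem allows $q\geq 0$.
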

	\begin{remark}\upshape{
		The additional integral terms appearing in the above identity are analogous to that appearing in the case for the general resolvent identity of Theorem \ref{Thm:Res-TwoSidedExitDownwards-PositiveHalfAxis}  for $a$, $b\in\mathbb R_+$. In the special case for $b = 0$, we have that 
		$$
		\mathcal{Z}_{-\lambda}^{(q+\lambda,p)}(x;0) = 1 + q \int^x_0 \mathcal{W}^{(q+\lambda,p)}_{-\lambda p}(y;0) \dd y,
		$$
		which is analogous to the classical definition of  $Z^{(q)}$. }
	\end{remark}
	
	\section*{Acknowledgement}
The authors are grateful to Takis Konstantopoulos for suggesting the approach considered in Theorem  \ref{sdepart}.	
	
	

	\bibliography{ApostolosV2.bib}
	\bibliographystyle{abbrv}
\end{document}